\theoremstyle{plain}
\newtheorem{theorem}[equation]{Theorem}
\newtheorem{lemma}[equation]{Lemma}
\newtheorem{corollary}[equation]{Corollary}
\newtheorem{proposition}[equation]{Proposition}
\theoremstyle{definition}
\newtheorem{definition}[equation]{Definition}
\newtheorem{condition}[equation]{Condition}
\theoremstyle{remark}
\newtheorem{remark}[equation]{Remark}
\newcommand{\dv}{\operatorname{div}}
\newcommand{\dist}{\operatorname{dist}}
\newcommand{\diam}{\operatorname{diam}}
\newcommand{\loc}{\operatorname{loc}}
\newcommand{\tr}{\operatorname{tr}}
\newcommand*{\tran}{^{\mkern-1.5mu\mathsf{T}}}
\numberwithin{equation}{section}
\newcommand{\bN}{\mathbb{N}}
\newcommand{\bR}{\mathbb{R}}
\newcommand{\bZ}{\mathbb{Z}}
\newcommand\cD{\mathcal{D}}
\newcommand\rU{\mathrm{U}}
\providecommand{\set}[1]{\{#1\}}
\providecommand{\Set}[1]{\left\{#1\right\}}
\providecommand{\abs}[1]{\lvert#1\rvert}
\providecommand{\Abs}[1]{\left\lvert#1\right\rvert}
\providecommand{\norm}[1]{\lVert#1\rVert}
\renewcommand{\vec}[1]{\boldsymbol{#1}}
\begin{document}
\title[On oblique derivative problem]
{On conormal and oblique derivative problem for elliptic equations with Dini mean oscillation coefficients}

\author[H. Dong]{Hongjie Dong}
\address[H. Dong]{Division of Applied Mathematics, Brown University,
182 George Street, Providence, RI 02912, United States of America}
\email{Hongjie\_Dong@brown.edu}
\thanks{H. Dong was partially supported by the NSF under agreement DMS-1600593.}

\author[J. Lee]{Jihoon Lee}
\address[J. Lee]{Department of Mathematics, Yonsei University, 50 Yonsei-ro, Seodaemun-gu, Seoul 03722, Republic of Korea}
\email{jjhlee24@gmail.com}

\author[S. Kim]{Seick Kim}
\address[S. Kim]{Department of Mathematics, Yonsei University, 50 Yonsei-ro, Seodaemun-gu, Seoul 03722, Republic of Korea}
\email{kimseick@yonsei.ac.kr}
\thanks{S. Kim is partially supported by NRF Grant No. NRF-2016R1D1A1B03931680 and No. NRF-20151009350}

\subjclass[2010]{35J25, 35B45, 35B65}

\keywords{Dini mean oscillation, oblique derivative problem, conormal derivative problem}

\begin{abstract}
We show that weak solutions to conormal derivative problem for elliptic equations in divergence form are continuously differentiable up to the boundary provided that the mean oscillations of the leading coefficients satisfy the Dini condition, the lower order coefficients satisfy certain suitable conditions, and the boundary is locally represented by a $C^1$ function whose derivatives are Dini continuous.
We also prove that strong solutions to oblique derivative problem for elliptic equations in nondivergence form are twice continuously differentiable up to the boundary if the mean oscillations of coefficients satisfy the Dini condition and the boundary is locally represented by a $C^1$ function whose derivatives are double Dini continuous.
This in particular extends a result of M. V. Safonov (Comm. Partial Differential Equations 20:1349--1367, 1995).
\end{abstract}
\maketitle

\section{Introduction and main results}
Let $\Omega$ be a bounded domain in $\bR^n$.
We consider second-order elliptic operators $L$ in divergence form
\begin{equation}							\label{master-d}
L u= \sum_{i,j=1}^n D_i(a^{ij}(x)D_ju+a^i(x)u)+ \sum_{i=1}^n b^i(x) D_i u + c(x)u
\end{equation}
and also second-order elliptic operators $\mathscr{L}$ in nondivergence form
\begin{equation}							\label{master-nd}
\mathscr{L} u= \sum_{i,j=1}^n a^{ij}(x) D_{ij} u+ \sum_{i=1}^n b^i(x) D_i u + c(x)u.
\end{equation}
We assume that the principal coefficients $\mathbf{A}=(a^{ij})_{i,j=1}^n$ are defined on $\bR^n$ and satisfy the uniform ellipticity condition
\begin{equation}					\label{ellipticity}
\lambda \abs{\xi}^2 \le \sum_{i,j=1}^n a^{ij}(x) \xi^i \xi^j, \quad \forall \xi=(\xi^1,\ldots, \xi^n) \in \bR^n,\quad \forall x \in \bR^n
\end{equation}
and the uniform boundedness condition
\begin{equation} \label{boundedness}
\sum_{i,j=1}^n \,\abs{a^{ij}(x)}^2 \le \Lambda^2, \;\; \forall x \in \bR^n
\end{equation}
for some positive constants $\lambda$ and $\Lambda$.
In the nondivergence case, we may assume that $\mathbf{A}$ is symmetric (i.e. $a^{ij}=a^{ji}$) as usual.
We shall further assume that $\mathbf A$ is of Dini mean oscillation; i.e., its mean oscillation function
\[
\omega_{\mathbf A}(r):=\sup_{x \in \bR^n} \fint_{B(x,r)} \,\abs{{\mathbf A}(y)-\bar {\mathbf A}_{x,r}}\,dy \quad \left(\; \bar {\mathbf A}_{x,r} :=\fint_{B(x,r)} \mathbf A\;\right)
\]
satisfies the Dini condition.
We say that a function $\omega: [0,1] \to [0, \infty)$ satisfies the Dini condition if
\[
\int_0^1 \frac{\omega(t)}{t}\,dt <+\infty
\]
and that $\omega$ satisfies the double Dini condition if
\[
\int_0^1 \frac{1}{s} \int_0^s \frac{\omega(t)}{t}\,dt \,ds =\int_0^1 \frac{\omega(t)\ln \frac{1}{t}}{t}\,dt <+\infty.
\]
We say that a function $f$ is Dini continuous (resp. double Dini continuous) if its modulus of continuity satisfies the Dini condition (resp. double Dini condition).
We write $f \in C^{k, \rm{Dini}}$ (resp. $f \in C^{k, \rm{Dini}^2}$) if $D^\alpha f$  is Dini continuous (resp. double Dini continuous) for each multi-index $\alpha$ with $\abs{\alpha} \le k$; refer to Section~\ref{sec_def} for the more precise definitions.

In the divergence case, we assume that $\partial \Omega$ is $C^{1,\rm{Dini}}$ and consider the conormal derivative operator
\[
\mathbf{A} \nabla u \cdot \vec \nu + \vec a u \cdot \vec \nu +a^0 u:= \sum_{i,j=1}^n a^{ij}(x) D_j u \nu^i + \sum_{i=1}^n a^i u \nu^i+a^0 u\quad \text{on}\quad \partial \Omega,
\]
where $\vec \nu=(\nu^1, \ldots, \nu^n)$ denotes the outward unit normal vector, $\vec a= (a^1,\ldots, a^n)$ is of Dini mean oscillation, and $a^0$ is Dini continuous.
In the nondivergence case, we assume that $\partial\Omega$ is $C^{1, \rm{Dini}^2}$ and consider the oblique derivative operator
\[
\beta^0 u + \vec \beta \cdot \nabla u := \beta^0(x) u + \sum_{i=1}^n \beta^i(x) D_i u\quad\text{on}\quad \partial\Omega,
\]
where $\beta^0$ and  $\vec \beta=(\beta^1,\ldots, \beta^n)$ are in $C^{1,\rm{Dini}^2}(\overline \Omega)$ and $\vec \beta$ satisfies the obliqueness condition
\begin{equation}				\label{oblique}
\abs{\vec \beta \cdot \vec \nu}  \ge \mu_0 \, \abs{\vec \beta} \quad\mbox{on}\quad \partial\Omega
\end{equation}
for some positive constant $\mu_0$.

In this paper, we are concerned with the conormal derivative problem for divergence form equation
\[
L u= \dv \vec g + f\;\mbox{ in } \; \Omega,\quad \mathbf{A} \nabla u \cdot \vec \nu+ \vec a u \cdot \vec \nu + a^0 u  = \vec g \cdot \vec \nu+g^0 \;\mbox{ on }\;\partial\Omega,
\]
and the oblique derivative problem for nondivergence form equation
\[
\mathscr{L} u= f\;\mbox{ in } \; \Omega,\quad \beta^0 u + \vec \beta \cdot \nabla u=g\;\mbox{ on }\;\partial\Omega.
\]
For the conormal derivative problem, we shall show that  $u$ is continuously differentiable up to the boundary if the data $\vec g$ is of Dini mean oscillation, $g^0$ is Dini continuous, and if the data $f$ and the lower order coefficients of $L$  belong to $L^q$ with $q>n$.
For the oblique derivative problem, we shall show that $u$ is twice continuously differentiable up to the boundary if the data $f$ and the lower order coefficients of $\mathscr{L}$ are of Dini mean oscillation, and the boundary data $g$, $\beta^0$, and $\vec \beta$ are of $C^{1,\rm{Dini}^2}$.

A few remarks are in order.
Very recently, under the same condition on $\mathbf A$ as imposed here, the first and third named authors \cite{DK17} proved that any $W^{1,2}$ weak solution of the equation $\dv(\mathbf{A} \nabla u)=0$ is continuously differentiable and that any $W^{2,2}$ strong solution of the equation $\tr(\mathbf{A} D^2 u)=0$ is twice continuously differentiable.
Later, the first and third named authors and Escauriaza \cite{DEK17} considered general elliptic equation with lower order coefficients (as considered here) subject to Dirichlet boundary condition and extended the interior estimates in \cite{DK17} to the corresponding $C^1$ and $C^2$ estimates up to the boundary.
In this perspective, this paper can be considered as a natural extension of \cite{DEK17} to conormal and oblique derivative boundary conditions.
Regarding the oblique derivative problem, we are obliged to mention a paper by Safonov \cite{Safonov}, where he proved a priori global $C^{2, \alpha}$ estimates for
solutions assuming that the coefficients and domain satisfy the H\"older condition, which was also established earlier by Lieberman \cite{Lieb87b} by a different method.
We borrowed some crucial technical details from \cite{Safonov} and adapted to our setting.

There are many other literature dealing with the oblique derivative problem and the conormal derivative problem.
Among them, we point out that in \cite[Theorem 5.1]{Lieb87} a result similar to Theorem \ref{thm-main-conormal} below was proved for quasilinear elliptic equations under the uniform Dini continuity condition. In \cite[Theorem 5.4]{Lieb02} a weighted $C^2$ estimate was obtained for fully nonlinear elliptic equations with the oblique derivative boundary condition under the uniform Dini condition.
We also mention a book by Lieberman \cite{Lieb2013}, which gives a comprehensive exposition on the theory of oblique derivative problems for elliptic equations.
We ask readers interested in history and applications of oblique derivative problems to consult \cite{Lieb2013} and references therein.

Now we state the main results of the paper more precisely.
We first consider the conormal derivative problem for a divergence structure elliptic equation.
\begin{condition}			\label{cond-d}
$\mathbf{A}=(a^{ij})$ and $\vec a = (a^1, \ldots, a^n)$ are of Dini mean oscillation in $\overline \Omega$, $a^0$ is Dini continuous in $\overline \Omega$, and $\vec b = (b^1,\ldots, b^n)$ and $c$ belong in $L^q(\Omega)$ with $q>n$.
\end{condition}

\begin{theorem}					\label{thm-main-conormal}
Let $\Omega$ have $C^{1, \rm{Dini}}$ boundary, the coefficients of $L$ in \eqref{master-d} satisfy the conditions \eqref{ellipticity} and \eqref{boundedness}, and Condition~\ref{cond-d}.
Suppose $u\in W^{1,2}(\Omega)$ is a weak solution of
\[
L u= \dv \vec g + f\;\mbox{ in } \; \Omega,\quad \mathbf{A} \nabla u \cdot \vec \nu+ \vec a u \cdot \vec \nu +a^0 u = \vec g \cdot \vec \nu +g^0 \;\mbox{ on }\;\partial\Omega,
\]
where $\vec g = (g^1, \ldots, g^n)$ are of Dini mean oscillation in $\overline\Omega$, $g^0$ is Dini continuous in $\partial\Omega$, and $f \in L^q(\Omega)$ with $q>n.$ Then we have $u\in C^1(\overline \Omega).$
\end{theorem}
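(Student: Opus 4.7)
The plan is to adapt the Campanato-type mean oscillation approach of \cite{DKS16} and \cite{DEK17} from the interior and Dirichlet settings to the conormal boundary problem. The central idea is, at each boundary point $x_0 \in \partial\Omega$, to freeze the leading coefficients and the local averages of $\vec g$ at scale $r$, compare $u$ with the solution $v$ of a constant-coefficient homogeneous conormal problem on a half-ball, and exploit the $C^{1,\alpha}$ regularity of $v$ to obtain geometric decay of the $L^2$ excess of $\nabla u$. Summing the resulting one-step estimate over dyadic scales, with the Dini mean oscillation of $\mathbf A, \vec a, \vec g$ and the Dini continuity of $a^0, g^0$ producing a summable error, will yield a uniform modulus of continuity for $\nabla u$ up to $\partial\Omega$, hence $u\in C^1(\overline\Omega)$.

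First I would perform several preliminary reductions. Since $\vec b, c, f \in L^q(\Omega)$ with $q>n$ and $u\in W^{1,2}(\Omega)$ solves a divergence-form equation with conormal boundary condition, the De Giorgi--Nash--Moser theory yields $u\in C^\alpha(\overline\Omega)$ for some $\alpha\in(0,1)$. Because the pointwise product of a Dini mean oscillation function with a H\"older continuous function is again of Dini mean oscillation, the terms $\vec a\,u$ and $a^0 u$ can be absorbed into $\vec g$ and $g^0$ respectively, while $\vec b\cdot\nabla u+cu$ joins $f$ in $L^q$. It thus suffices to treat $\dv(\mathbf A\nabla u)=\dv\vec g+f$ with $\mathbf A\nabla u\cdot\vec\nu=\vec g\cdot\vec\nu+g^0$ on $\partial\Omega$. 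Localizing near a boundary point and flattening $\partial\Omega$ via a $C^{1,\rm Dini}$ diffeomorphism, whose Jacobian is Dini continuous, produces transformed coefficients $\tilde{\mathbf A}$ that remain of Dini mean oscillation with a controlled modulus, together with a flat conormal condition $\tilde a^{ni}D_i\tilde u=\tilde g^n+\tilde g^0$ on $\{x^n=0\}$, with $\tilde{\vec g}$ of Dini mean oscillation and $\tilde g^0$ Dini continuous.

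The heart of the argument is a one-step mean oscillation decay estimate at the flat boundary. For a boundary point $x_0$ and small $r$, let $\bar{\mathbf A}$ and $\bar{\vec g}$ denote the integral averages over $B_r^+(x_0)$, and let $v$ solve the constant-coefficient conormal problem
\[
D_i(\bar a^{ij}D_j v)=0 \text{ in } B_r^+(x_0), \qquad \bar a^{ni}D_i v=\bar g^{\,n}+g^0(x_0) \text{ on } B_r(x_0)\cap\{x^n=0\},
\]
with $v=u$ on the spherical portion of $\partial B_r^+(x_0)$. Classical theory for constant-coefficient conormal problems on half-balls gives $v\in C^{1,\alpha}$, whence the excess of $\nabla v$ on $B_{\theta r}^+(x_0)$ is bounded by $\theta^\alpha$ times that on $B_r^+(x_0)$. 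The energy identity applied to $w=u-v$ produces
\[
\Bigl(\fint_{B_r^+(x_0)}\abs{\nabla w}^2\Bigr)^{1/2}\lesssim \omega_{\mathbf A}(r)\Bigl(\fint_{B_{2r}^+(x_0)}\abs{\nabla u}^2\Bigr)^{1/2}+\omega_{\vec g}(r)+\omega_{g^0}(r)+r^{1-n/q}\norm{f}_{L^q},
\]
where the successive terms record the coefficient freezing, the $\vec g$- and $g^0$-freezing, and the contribution from $f$. Combining with the $C^{1,\alpha}$ decay of $\nabla v$ yields a decay inequality $\phi(x_0,\theta r)\le C\theta^\alpha\phi(x_0,r)+C\eta(r)$ for the excess $\phi(x_0,r)=(\fint_{B_r^+(x_0)}\abs{\nabla u-(\nabla u)_{x_0,r}}^2)^{1/2}$, with $\eta$ Dini-summable. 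Choosing $\theta$ so that $C\theta^\alpha<\tfrac12$ and iterating dyadically shows that $(\nabla u)_{x_0,r}$ form a Cauchy net as $r\to 0^+$ with a modulus controlled by the Dini integrals. Combined with the interior $C^1$ estimate of \cite{DKS16}, adapted to include $\vec g$ and $f$ as in \cite{DEK17}, this gives $u\in C^1(\overline\Omega)$.

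The main obstacle I anticipate is the Dini calculus underlying the reductions: one must verify quantitatively that the $C^{1,\rm Dini}$ flattening and the absorptions of $\vec a\,u$ and $a^0 u$ preserve the Dini mean oscillation and Dini continuity with controlled moduli, so that the error terms $\omega_{\mathbf A},\omega_{\vec g},\omega_{g^0}$ appearing in the decay inequality remain Dini-summable after the reductions. One also must handle the curved portion of $\partial B_r^+(x_0)$ (where $v=u$) through cut-off arguments so that only controlled lower-order errors appear in the energy identity for $w=u-v$. These technical points were treated in the Dirichlet setting in \cite{DEK17}, and the analogous machinery from \cite{Safonov} for oblique problems will guide the adaptation to the conormal case.
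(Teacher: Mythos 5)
Your overall strategy---reduce to a pure conormal problem by absorbing $\vec a u$, $a^0 u$, and the lower-order terms, flatten the boundary, compare $u$ against a constant-coefficient solution, and iterate a one-step decay estimate over dyadic scales---is indeed the skeleton of the paper's proof. The reduction steps (including the observation that Dini mean oscillation is preserved under multiplication by a H\"older function, and the trick of replacing $g^n$ by $\tilde g^n = g^n - g^0 + a^0 u$ on the flat boundary) are all present in the paper as well.

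However, there is a genuine gap in the central comparison step. You measure the excess of $\nabla u$ in $L^2$ and estimate $w=u-v$ via an energy identity, which gives at best
\[
\Bigl(\fint_{B_r^+}\abs{\nabla w}^2\Bigr)^{1/2} \lesssim \norm{\nabla u}_{L^\infty}\Bigl(\fint_{B_r^+}\abs{\mathbf A-\bar{\mathbf A}}^2\Bigr)^{1/2}+\ldots.
\]
The Dini mean oscillation hypothesis controls only the $L^1$ mean oscillation $\omega_{\mathbf A}(r)=\sup_x\fint_{B_r(x)}\abs{\mathbf A-\bar{\mathbf A}}$, and in general the $L^2$ version is \emph{not} comparable to the $L^1$ one (the paper explicitly remarks, citing Acquistapace, that the comparability fails without additional monotonicity assumptions). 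Using boundedness of $\mathbf A$ one can only say $\fint\abs{\mathbf A-\bar{\mathbf A}}^2\le 2\Lambda\,\omega_{\mathbf A}(r)$, yielding an error $\omega_{\mathbf A}(r)^{1/2}$ in your decay inequality; but $\omega^{1/2}$ need not be Dini-summable when $\omega$ is (take $\omega(r)=(\log 1/r)^{-2}$), so the iteration does not close. This is precisely why the paper works with the $L^p$ excess $\phi(x,r)=\inf_{\vec q}(\fint\abs{Du-\vec q}^p)^{1/p}$ for some fixed $p\in(0,1)$ and replaces the energy estimate by a weak-type $(1,1)$ estimate for the constant-coefficient conormal problem (Lemma~\ref{lem2.5}): the weak $(1,1)$ bound $\abs{\{\abs{Dw}>t\}}\le Ct^{-1}\int\abs{\vec f}$ gives, for $p<1$, the correct bound $(\fint\abs{Dw}^p)^{1/p}\lesssim\fint\abs{\vec f}\lesssim\omega_{\mathbf A}(2r)\norm{Du}_{L^\infty}+\omega_{\vec g}(2r)$, linearly in the $L^1$ oscillation. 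That weak-$(1,1)$ machinery (duality against an adjoint problem, the Escauriaza--Montaner/Brezis-type localization, and the corresponding half-ball decomposition on the smooth reference domain $\cD(\bar x,2r)$) is the essential technical input you would need to supply, and it does not follow from an $L^2$ energy argument.

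A secondary point: for the constant-coefficient comparison solution the decay of the excess is linear in $\kappa$ (since $v$ is smooth up to the flat boundary), not merely $\kappa^\alpha$; this is harmless but your stated rate undersells what the comparison gives and is not why the argument works.
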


We also consider the oblique derivative problem for nondivergence form elliptic equations.

\begin{condition}			\label{cond-nd}
$\mathbf{A}=(a^{ij})$, $\vec b=(b^1,\ldots, b^n)$, and $c$ are of Dini mean oscillation in $\overline \Omega$.
\end{condition}

\begin{condition}			\label{cond-oblique}
$\beta^0$ and $\vec \beta=(\beta^1,\ldots, \beta^n)$ are in $C^{1,\rm{Dini}^2}(\overline \Omega)$, and $\vec \beta$ satisfies \eqref{oblique}.
\end{condition}

\begin{theorem}					\label{thm-main-oblique}
Let $\Omega$ have $C^{1,\rm{Dini}^2}$ boundary, the coefficients of $\mathscr{L}$ in \eqref{master-nd} satisfy the condition \eqref{ellipticity} and \eqref{boundedness}, and Condition~\ref{cond-nd}.
Let $\beta^0$ and $\vec \beta$ satisfy Condition~\ref{cond-oblique}.
Suppose $u\in W^{2,2}(\Omega)$ is a strong solution of the oblique derivative problem
\[
\mathscr{L} u= f\;\mbox{ in } \; \Omega,\quad \beta^0 u + \vec \beta \cdot \nabla u=g\;\mbox{ on }\;\partial\Omega,
\]
where $f$ is of Dini mean oscillation in $\Omega$ and $g \in C^{1, \rm{Dini}^2}(\overline \Omega)$.
Then we have $u\in C^2(\overline \Omega)$.
\end{theorem}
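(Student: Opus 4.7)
The plan is to establish a Campanato-type mean oscillation estimate for $D^2u$ up to the boundary, following the scheme of \cite{DKS16,DEK17} for the interior and Dirichlet cases, and adapting it to the oblique condition. Away from $\partial\Omega$ the desired $C^2$ regularity is already provided by the interior result of \cite{DKS16}, so it suffices to work locally near an arbitrary boundary point $x_0\in\partial\Omega$. Near $x_0$ I would write $\partial\Omega$ as a graph $x_n=\phi(x')$ with $\phi\in C^{1,\mathrm{Dini}^2}$, and flatten by the $C^{1,\mathrm{Dini}^2}$-diffeomorphism $\Psi(x',x_n)=(x',x_n-\phi(x'))$. Because $D\Psi$ is double Dini continuous, the transformed leading coefficients $\tilde{\mathbf A}$ remain of Dini mean oscillation, the transformed lower-order coefficients stay in the admissible class, and the boundary condition on $\{x_n=0\}$ keeps the form $\tilde\beta^0 u+\tilde{\vec\beta}\cdot\nabla u=\tilde g$ with $\tilde\beta^0,\tilde{\vec\beta},\tilde g\in C^{1,\mathrm{Dini}^2}$ and $\tilde{\vec\beta}$ still obliquely pointing.

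The technical core is a polynomial approximation lemma for the \emph{model problem} on the upper half-ball $B_r^+:=B_r(0)\cap\{x_n>0\}$: if $v\in W^{2,2}(B_r^+)$ solves
\[
\sum_{i,j}a_0^{ij}D_{ij}v=0 \quad\text{in } B_r^+,\qquad \beta_0^0v+\vec\beta_0\cdot\nabla v=0 \quad\text{on } B_r'\times\{0\},
\]
with constant coefficients satisfying obliqueness $|\vec\beta_0\cdot e_n|\ge\mu_0|\vec\beta_0|$, then for every $\rho\in(0,r/2]$ there is a quadratic polynomial $P$ solving the same equation and boundary condition with
\[
\Bigl(\fint_{B_\rho^+}|v-P|^2\Bigr)^{1/2} \le C\Bigl(\frac{\rho}{r}\Bigr)^{3}\Bigl(\fint_{B_r^+}|v|^2\Bigr)^{1/2}.
\]
This decay is a standard higher-boundary-regularity computation once the obliqueness condition is used to express $D_nv$ on the flat face as a linear combination of $v$ and the tangential derivatives $D_iv$ with $i<n$, reducing the analysis to Schauder-type estimates for constant-coefficient equations on the half-space.

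With this model lemma, at each boundary-centered scale $r_k=\theta^k r_0$ I would freeze all coefficients at $x_0$, compare $u$ to the solution $v_k$ of the corresponding frozen-coefficient oblique problem (with the frozen boundary data), and apply the decay to extract an approximating quadratic polynomial $P_k$. The estimate for $\|u-v_k\|$ comes from the global $W^{2,2}$-oblique estimate of Agmon--Douglis--Nirenberg type; the size of the perturbation is measured by the mean oscillations of $\mathbf A$, $\vec b$, $c$, $\vec\beta$, $\beta^0$, $f$, and $g$ at scale $r_k$, together with the flattening error controlled by the modulus of $D\phi$. Summing these oscillations in a geometric series as in \cite{DKS16,DEK17} shows that $\{D^2P_k(x_0)\}$ is Cauchy with a Dini modulus; identifying the limit with $D^2u(x_0)$ in a Lebesgue sense gives continuity of $D^2u$ at $x_0$, and letting $x_0$ vary yields $u\in C^2(\overline\Omega)$.

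The main obstacle is that the oblique boundary condition contains a first derivative of $u$, so that passing from an estimate on $u$ to one on $D^2u$ effectively costs one extra order of differentiation on the boundary ingredients. This is precisely why $\phi$, $\vec\beta$, $\beta^0$, and $g$ must be $C^{1,\mathrm{Dini}^2}$ rather than $C^{1,\mathrm{Dini}}$ as in the Dirichlet case of \cite{DEK17}: the extra logarithmic weight $\ln(1/t)$ built into the double Dini condition is exactly what absorbs one additional dyadic summation of the boundary moduli over scales. A subsidiary difficulty, already emphasized in \cite{Safonov}, is that the perturbative comparison between $u$ and $v_k$ must be carried out with barrier or cutoff constructions that respect the obliqueness constant $\mu_0$ uniformly across scales; borrowing the relevant technical devices from \cite{Safonov} and adapting them to the Dini setting is what allows the iteration to close.
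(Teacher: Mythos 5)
Your route is genuinely different from the paper's: you flatten the boundary first by the graph map $\Psi(x',x^n)=(x',x^n-\phi(x'))$, keep the oblique boundary condition, and then run a Campanato iteration against a constant‑coefficient oblique model problem. The paper instead performs three sequential reductions \emph{before} flattening: (i) a Safonov‑type $C^{2,\mathrm{Dini}}$ change of variables rectifies the oblique field $\vec\beta$ so the boundary condition becomes $D_n\hat u=\hat g$; (ii) a $C^{2,\mathrm{Dini}}$ corrector $v$ with $D_n v=\hat g$ on $\partial\Omega$ is subtracted so the boundary condition becomes homogeneous, $D_n u_0=0$; and only then (iii) the boundary is flattened by Lieberman's regularized distance $\psi$, which is $C^\infty$ inside $\Omega$ with a precise rate $|D^2\psi|\lesssim \psi^{-1}\varrho_{D\psi_0}(\psi)$. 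The point of this order of operations is that the freshly created inhomogeneity $h^{ij}=D_n\tilde u\cdot D_{ij}\psi$ is a product of something vanishing linearly at the boundary with something blowing up like $\psi^{-1}\varrho_{D\psi_0}(\psi)$, and the cancellation makes $h^{ij}$ Dini continuous.

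The concrete gap in your proposal is the regularity of the graph flattening. Writing the transformed nondivergence equation requires $D^2\Psi=-D^2\phi$, but $\phi\in C^{1,\mathrm{Dini}^2}$ only gives (double‑)Dini continuity of $D\phi$ — $D^2\phi$ need not exist at all, so the transformed right‑hand side $\tilde f$ and, more seriously, the transformed boundary field $\tilde{\vec\beta}=(D\Psi)\vec\beta\circ\Psi^{-1}$ (whose first derivatives again involve $D^2\phi$) are undefined, and the claim that they ``stay in the admissible class'' fails. This is not a nuisance that can be waved away: the whole reason the paper invokes the regularized‑distance lemma (and does so \emph{after} reducing to $D_n u_0=0$) is precisely to replace $\phi$ by something smooth inside $\Omega$ while retaining only $C^{1,\mathrm{Dini}^2}$ control of its first derivatives, with the boundary vanishing of $D_n u_0$ providing the compensating factor. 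A secondary gap is that your model lemma for the constant‑coefficient \emph{oblique} problem on $B_r^+$ (cubic $L^2$ decay to a quadratic polynomial, and the companion weak type-$(1,1)$ bound on $D^2$) is asserted without proof; the paper only needs and proves this for the pure Neumann condition $D_nu=0$, where an even reflection across $\{x^n=0\}$ reduces everything to the interior situation of Dong--Krylov and Escauriaza--Montaner. For a genuine constant oblique field these reflection arguments do not apply directly, so that lemma would need a separate argument which you have not supplied.
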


\begin{remark}
In \cite{CM11} global Lipschitz estimates for certain quasilinear divergence form elliptic equations were established under minimal conditions on the data, the nonlinearity, and the domains.
In particular, the condition on the domain is weaker than the $C^{1, \rm{Dini}}$ condition in Theorem \ref{thm-main-conormal}. 
\end{remark}

The organization of the paper is as follows.
In Section~\ref{sec1}, we introduce some notation, definitions, and lemmas used in the paper.
Sections~\ref{sec2} and \ref{sec3} are devoted to the proofs of our main results, Theorem~\ref{thm-main-conormal} and Theorem~\ref{thm-main-oblique}, respectively.
In the Appendix, we provide the proofs for some technical lemmas that are slightly modified from those in Safonov's paper \cite{Safonov}.

\section{Preliminaries}					\label{sec1}
\subsection{Notation and definitions}			\label{sec_def}
We follow the same notation as used in \cite{DEK17}.
For completeness, we reproduce most frequently used ones here.
We denote by $B(x,r)$ the Euclidean ball centered at $x$ with radius $r$ and
\[
B_r = B(0,r), \quad B_r^{+}=B_r \cap \set{x^n >0}\quad\text{and}\quad  T(0,r)=B_r \cap \set{x^n=0}.
\]
Let us fix a smooth domain $\cD$ satisfying
\begin{equation}				\label{setD}
B_{1/2}^{+} \subset \cD \subset B_1^{+}
\end{equation}
so that $\partial \cD$ contains a flat portion $T(0, \frac12)$.
For $\bar x \in \partial \bR^n_{+}=\set{x^n=0}$, we then set
\[
B^{+}(\bar x,r)=B_r^{+}+ \bar x,\quad T(\bar x,r)=T(0,r)+\bar x,\quad\text{and}\quad \cD(\bar x,r)= r \cD+ \bar x.
\]
Hereafter, we shall adopt the usual summation convention for repeated indices.

Throughout the paper, we shall use the notation
\begin{equation}					\label{eq0853m}
[u]_{k; E} := \sup_{x \in E}\, \abs{D^k u(x)} \quad \text{and} \quad [u]_{k,\mu; E} := \sup_{\substack{x, y\in E\\x\neq y}} \frac{\abs{D^k u(x)-D^k u(y)}}{\abs{x-y}^\mu},
\end{equation}
where $k=0, 1, 2, \ldots$, $0<\mu <1$, and $E \subset \bR^n$.
We also write
\begin{equation}					\label{eq0854m}
\abs{u}_{k; E} := \sum_{j=0}^k \, [u]_{j; E} \quad \text{and} \quad \abs{u}_{k,\mu; E} := \abs{u}_{k; E}+ [u]_{k, \mu; E}.
\end{equation}

\begin{definition}				\label{def:dini}
Let $E \subset \bR^n$ and let $f: E \to \bR$.
The modulus of continuity of $f$ is the increasing function $\varrho_f : [0,\infty) \to [0,\infty)$ defined by
\[
\varrho_f(t):=\sup \Set{\,\abs{f(x)-f(y)}: x, y \in E, \; \abs{x-y} \le t\,}.
\]
A function $f$ is said to be Dini continuous (in $E$) if $\varrho_f$ satisfies the Dini condition
\[
\int_0^1 \frac{\varrho_f(t)}{t}\,dt <+\infty;
\]
$f$ is said to be double Dini continuous (in $E$) if $\varrho_f$ satisfies the double Dini condition (see \cite{ME65, ME71})
\[
\int_0^1 \frac{1}{s} \int_0^s \frac{\varrho_f(t)}{t}\,dt\,ds = \int_0^1 \frac{\varrho_f (t)\ln \frac{1}{t}}{t}\,dt <+\infty.
\]
For $k=0,1,2,..$, we denote by $C^{k,\rm{Dini}}(E)$ (resp. $C^{k, \rm{Dini}^2}(E)$) the set of all $k$-times continuously differentiable functions $f$ on $E$ such that $D^\alpha f$ is Dini continuous (resp. double Dini continuous) in $E$, for each multi-index $\alpha$ with $\abs{\alpha} \le k$.
By the $C^{k, \rm{Dini}}$ characteristics of $f$ in $E$, we mean $\abs{f}_{k;E}$ and $\varrho_{D^\alpha f}(t)$ with multi-index $\alpha$ with $\abs{\alpha}=k$.
\end{definition}

\begin{definition}			\label{c1dini}
Let $\Omega(x,r):=\Omega \cap B(x,r)$.
For any $k=1,2,\ldots$, we say that the boundary $\partial \Omega$ is $C^{k,\rm{Dini}}$ (resp. $C^{k,\rm{Dini}^2}$) if for each point $x_0 \in \partial\Omega$, there exist $r>0$ independent of $x_0$ and a $C^{k, \rm{Dini}}$ (resp. $C^{k,\rm{Dini}^2}$) function $\gamma: \bR^{n-1} \to \bR$ such that (upon relabeling and reorienting the coordinates axes if necessary) in a new coordinate system $(x',x^n)=(x^1,\ldots,x^{n-1},x^n)$, $x_0$ becomes the origin and
\[
\Omega(0,r)=\{ x \in B(0, r) : x^n > \gamma(x^1, \ldots, x^{n-1}) \},\quad  \gamma(0')=0,\quad D\gamma(0')=0.
\]
\end{definition}

\begin{remark}           \label{rem2.6}
By using the implicit function theorem and a partition of the unity, it is easily seen that $\partial\Omega$ is of $C^{k,\rm{Dini}}$ (resp. $C^{k,\rm{Dini}^2}$) if and only if there exists a $C^{k, \rm{Dini}}$ (resp. $C^{k,\rm{Dini}^2}$) function $\psi_0: \bR^{n} \to \bR$ such that $\Omega=\{x\in \bR^n\,:\,\psi_0(x)>0\}$ and $|D\psi_0|\ge 1$ on $\partial\Omega$. We call $\psi_0$ a defining function of $\Omega$. Clearly, the $C^{k,\rm{Dini}}$ (resp. $C^{k,\rm{Dini}^2}$) characteristic of $\psi_0$ is comparable to that of $\gamma$ in Definition \ref{c1dini}. In the sequel, we shall use these two equivalent definitions interchangeably.
\end{remark}

\begin{definition}				\label{def:DiniMO}
We say that a function $f: \overline \Omega \to \bR$ is of Dini mean oscillation if its mean oscillation function $\omega_f$ defined by
\[
\omega_f(r):=\sup_{x\in \overline \Omega} \fint_{\Omega(x,r)} \,\abs{f(y)-\bar {f}_{\Omega(x,r)}}\,dy \quad \left(\; \bar f_{\Omega(x,r)} :=\fint_{\Omega(x,r)} f\;\right)
\]
satisfies the Dini condition
\[
\int_0^1 \frac{\omega_f (r)}r \,dr <+\infty.
\]
\end{definition}

\begin{remark}
By H\"older's inequality,  our Dini mean oscillation condition is weaker than the $L^p$-Dini mean oscillation condition for any $p>1$, i.e., the function
\[
\omega_{(p)}(r):=\sup_{x\in \overline \Omega} \left(\fint_{\Omega(x,r)} \,\abs{f(y)-\bar {f}_{\Omega(x,r)}}^p\,dy\right)^{1/p}
\]
is a Dini function.
These conditions are in fact strictly weaker than the uniform Dini continuity condition; see an example in \cite[p. 418]{DK17}.
On the other hand, if we instead consider the functions
\[
\widehat\omega_{(p)}(r):= \sup_{0<s\le r} \omega_{(p)}(s),
\]
then it follows from the proof of \cite[Proposition~1.13]{Acq92} that if $\widehat\omega_{(1)}$ is a Dini function, then $\widehat \omega_{(p)}$ are also Dini functions for all $p \in (1, \infty)$.
It is clear that if $\widehat \omega_{(1)}$ is a Dini function, then $\omega_{(1)}$ is also a Dini function.
However, it is not clear to us whether our Dini mean oscillation condition implies that $\omega_{(p)}$ is a Dini function for $p \in (1,\infty)$.
\end{remark}

Finally, we adopt the usual summation convention over repeated indices.
Also, for nonnegative (variable) quantities $A$ and $B$, the relation $A \lesssim B$ should be understood that there is some constant $c>0$ such that $A \le c B$.
We write $A \simeq B$ if $A \lesssim B$ and $B \lesssim A$.

\subsection{Some preliminary lemmas}
\begin{lemma}						\label{lem1721th}
If $f$ is uniformly Dini continuous and $g$ is of Dini mean oscillation in $\Omega$, then $fg$ is of Dini mean oscillation in $\Omega$.
\end{lemma}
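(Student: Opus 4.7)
The plan is to bound the mean oscillation of $fg$ in terms of the modulus of continuity of $f$ and the mean oscillation of $g$ via the natural splitting
\[
f(y)g(y) - f(x)\bar g_{\Omega(x,r)} = \bigl(f(y)-f(x)\bigr)g(y) + f(x)\bigl(g(y)-\bar g_{\Omega(x,r)}\bigr),
\]
for $x\in\overline\Omega$ and $y\in\Omega(x,r)$. Using the elementary fact that replacing $\overline{(fg)}_{\Omega(x,r)}$ with any other constant $c$ only inflates the mean oscillation by a factor of $2$, it suffices to control $\fint_{\Omega(x,r)} \abs{fg - f(x)\bar g_{\Omega(x,r)}}\,dy$.

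First I would verify that both $f$ and $g$ are bounded on $\overline\Omega$. Uniform Dini continuity of $f$ forces $f$ to be uniformly continuous, and since $\Omega$ is bounded, $f$ extends continuously to $\overline\Omega$ and is bounded there. For $g$, Dini mean oscillation on $\overline\Omega$ implies continuity of $g$ (this is the basic regularity result underpinning \cite{DKS16, DEK17}), and hence boundedness on the compact set $\overline\Omega$. Alternatively, one can bypass the continuity statement by a direct telescoping estimate controlling $\abs{\bar g_{\Omega(x,r)}}$ through dyadic differences summable by the Dini condition, but invoking the known continuity is cleaner.

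Given the splitting and the boundedness, I would estimate the two pieces separately. Since $\abs{f(y)-f(x)} \le \varrho_f(\abs{y-x}) \le \varrho_f(r)$ for $y\in\Omega(x,r)$ and $\abs{f(x)} \le \norm{f}_{L^\infty(\overline\Omega)}$, the averaged inequality yields
\[
\fint_{\Omega(x,r)} \bigabs{fg - f(x)\bar g_{\Omega(x,r)}}\,dy \le \varrho_f(r)\,\norm{g}_{L^\infty(\overline\Omega)} + \norm{f}_{L^\infty(\overline\Omega)}\,\omega_g(r).
\]
Taking supremum over $x \in \overline\Omega$ and doubling, I would obtain
\[
\omega_{fg}(r) \le 2\,\norm{g}_{L^\infty(\overline\Omega)}\,\varrho_f(r) + 2\,\norm{f}_{L^\infty(\overline\Omega)}\,\omega_g(r).
\]

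Since $\varrho_f$ satisfies the Dini condition by hypothesis and $\omega_g$ does too, dividing by $r$ and integrating from $0$ to $1$ gives a finite integral, so $\omega_{fg}$ satisfies the Dini condition. The only mildly subtle step is the $L^\infty$ control of $g$, which is why I would invoke the continuity result for Dini-mean-oscillation functions; the rest is a routine triangle-inequality decomposition.
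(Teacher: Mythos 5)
Your proof is correct and essentially the same as the paper's. The only difference is the intermediate point used in the triangle inequality: the paper compares $fg$ to the \emph{function} $f\,\bar g_{\Omega(x,r)}$ (so $\overline{fg}_{\Omega(x,r)}$ stays as the subtracted average and no doubling is needed), while you compare to the \emph{constant} $f(x)\,\bar g_{\Omega(x,r)}$ and then invoke the factor-of-$2$ inflation; both yield $\omega_{fg}(r) \lesssim \norm{g}_{L^\infty}\varrho_f(r) + \norm{f}_{L^\infty}\omega_g(r)$. Your explicit remark about the boundedness of $g$ (via continuity from the Dini mean oscillation condition) is a reasonable point of care that the paper passes over silently.
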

\begin{proof}
For any $x\in \overline\Omega$ and $r>0$, we have
\begin{align*}
\fint_{\Omega(x,r)} \,\Abs{fg-\overline{fg}_{\Omega(x,r)}}
&\le \fint_{\Omega(x,r)} \,\Abs{fg-f\,\bar{g}_{\Omega(x,r)}}
+\fint_{\Omega(x,r)} \,\Abs{f\,\bar{g}_{\Omega(x,r)}-\overline{fg}_{\Omega(x,r)}}\\
&\le \sup_{\Omega(x,r)} f \cdot \omega_g(r) + \varrho_f(r) \cdot \fint_{\Omega(x,r)} \abs{g},
\end{align*}
where we used
\[
\sup_{\Omega(x,r)}\, \Abs{f\,\bar{g}_{\Omega(x,r)}-\overline{fg}_{\Omega(x,r)}}
\le \varrho_f(r) \cdot \fint_{\Omega(x,r)} \abs{g}.
\]
Therefore, we get
\[
\omega_{fg}(r) \le \norm{f}_{L^\infty(\Omega)}\, \omega_g(r)+\norm{g}_{L^\infty(\Omega)}\,\varrho_f(r)
\]
and thus $\omega_{fg}$ is a Dini function.
\end{proof}

\begin{lemma}						\label{lem1822sat}
Let $\omega: [0, a] \to [0, \infty)$ be a function satisfying the (double) Dini condition.
Suppose there are constants $c_1, c_2 >0$ such that
\begin{equation}				\label{eq1925sat}
c_1 \omega(t) \le \omega(s) \le c_2 \omega(t)
\end{equation}
whenever $\tfrac12 t \le s \le  t$ and $0\le t \le a$. (It should be noted that the condition \eqref{eq1925sat} is automatically satisfied by $\varrho_f(t)$ and $\omega_f(t)$ introduced in Definitions~\ref{def:dini} and \ref{def:DiniMO}).
Let $\beta \in (0, 1]$ be given.
Then, there is a function $\tilde \omega: [0,a] \to [0,\infty)$ such that $\omega(t) \le \tilde \omega(t)$ for any $t \in [0,a]$ and that $t \mapsto t^{-\beta} \tilde \omega(t)$ is decreasing on $(0, a]$.
Moreover, $\tilde \omega(t)$ satisfies the (double) Dini condition and also satisfies the condition \eqref{eq1925sat}.
\end{lemma}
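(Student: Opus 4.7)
The plan is to define $\tilde\omega$ as the smallest majorant of $\omega$ for which $t^{-\beta}\tilde\omega(t)$ is nonincreasing, namely
\[
\tilde\omega(t) := t^\beta \sup_{t \le s \le a} s^{-\beta}\omega(s), \qquad t \in (0,a].
\]
The inequality $\tilde\omega(t) \ge \omega(t)$ is immediate by taking $s=t$, and monotonicity of $t^{-\beta}\tilde\omega(t)=\sup_{t\le s\le a}s^{-\beta}\omega(s)$ is clear. To verify the doubling condition \eqref{eq1925sat} for $\tilde\omega$, I would fix $t/2\le s\le t$ and split the supremum defining $s^{-\beta}\tilde\omega(s)$ into the contributions from $[s,t]$ and from $[t,a]$. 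On $[s,t]\subset [t/2,t]$, the doubling hypothesis on $\omega$ gives $\omega(u)\le c_2\omega(t)$ and $u^{-\beta}\le 2^\beta t^{-\beta}$, so the first piece is dominated by the second up to a multiplicative constant, which yields $\tilde\omega(s)/\tilde\omega(t)\in[2^{-\beta},C]$ for some $C$ depending only on $c_2$ and $\beta$.

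The main technical step is that $\tilde\omega$ inherits (double) Dini integrability. I would discretize: write $t\in[2^{-k-1}a,2^{-k}a]$ and set $\omega_k:=\omega(2^{-k}a)$. Using the doubling property, $s\mapsto s^{-\beta}\omega(s)$ is comparable to $(2^{-j}a)^{-\beta}\omega_j$ on each dyadic shell $[2^{-j-1}a,2^{-j}a]$, so
\[
\sup_{u\ge t}u^{-\beta}\omega(u)\;\simeq\; a^{-\beta}\max_{0\le j\le k}2^{j\beta}\omega_j.
\]
Integrating $t^{\beta-1}$ over the $k$th shell contributes a factor $\simeq(2^{-k}a)^\beta$, so the Dini integral of $\tilde\omega$ is comparable to $\sum_{k\ge 0} 2^{-k\beta}\max_{j\le k}2^{j\beta}\omega_j$. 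The key estimate is the Fubini-type geometric rearrangement
\[
\sum_{k=0}^\infty 2^{-k\beta}\max_{j\le k}2^{j\beta}\omega_j \;\le\; \sum_{j=0}^\infty \omega_j\sum_{k\ge j}2^{(j-k)\beta} \;=\; \frac{1}{1-2^{-\beta}}\sum_{j=0}^\infty\omega_j,
\]
and the right-hand side is a dyadic model of $\int_0^a \omega(t)/t\,dt$ by doubling, which finishes the Dini case. For the double Dini case, one inserts the weight $\ln(1/t)\simeq k\ln 2$ and uses $\sum_{k\ge j}k\cdot 2^{(j-k)\beta}\lesssim j+1$ to recover a dyadic model of $\int_0^a \omega(t)\ln(1/t)/t\,dt$.

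The only real obstacle is this bookkeeping: once the dyadic discretization and the summation swap above are set up, the doubling property of $\omega$ is invoked only to pass between the continuous and dyadic models, and the rest is a short geometric-series computation. I expect no further surprises in checking that $\tilde\omega$ lies in $[0,\infty)$ and that the constants produced depend only on $c_1$, $c_2$, and $\beta$.
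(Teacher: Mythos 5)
Your construction is identical to the paper's: the paper sets $\tilde\omega(t) = \sup_{s\in[t,a]}(t/s)^\beta\omega(s)$, which is exactly your $t^\beta\sup_{t\le s\le a}s^{-\beta}\omega(s)$, and then notes the domination, the monotonicity of $t^{-\beta}\tilde\omega(t)$, and the inherited doubling property, as you do. The only difference is in the last step: the paper outsources the (double) Dini integrability of $\tilde\omega$ to the proof of a lemma in an earlier reference, whereas you give a self-contained dyadic argument, reducing $\int_0^a\tilde\omega(t)\,dt/t$ to $\sum_k 2^{-k\beta}\max_{j\le k}2^{j\beta}\omega_j$ and then summing the geometric tails. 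Your bookkeeping is correct: $\max_{j\le k}\le\sum_{j\le k}$, interchange the sums, $\sum_{k\ge j}2^{(j-k)\beta}=(1-2^{-\beta})^{-1}$, and in the double Dini case $\sum_{k\ge j}k\,2^{(j-k)\beta}\lesssim j+1$ recovers $\sum_j(j+1)\omega_j\simeq\int_0^a\omega(t)\ln(1/t)\,dt/t$. So this is the same approach as the paper's, with the delegated integrability step worked out explicitly.
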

\begin{proof}
We set  $\tilde \omega(0)=0$ and for $0<t \le a$, define
\[
\tilde \omega(t) =\sup_{s \in [t, a]} \left(\frac{t}{s}\right)^\beta \omega(s).
\]
Then, it is clear that $\omega(t) \le \tilde\omega(t)$ and $t\mapsto \tilde \omega(t)/t^\beta$ is decreasing.
Also, it is straightforward to verify that $\tilde \omega$ satisfies \eqref{eq1925sat} when $\omega$ satisfies \eqref{eq1925sat}.
Finally, we refer to the proof of \cite[Lemma~2.9]{DEK17} for the fact that $\tilde\omega(t)$ satisfies the (double) Dini condition.
\end{proof}

\begin{lemma}						\label{lem2.5}
Let $\cD \subset \bR^n$ be a smooth domain satisfying \eqref{setD}.
Let $\bar{\mathbf A} = (\bar a^{ij})$ be a constant matrix satisfying \eqref{ellipticity} and \eqref{boundedness}.
For $\vec f \in L^2(\cD),$ let $u \in W^{1,2}(\cD)$ be a weak solution of
\[
\dv (\bar {\mathbf A} \nabla u)= \dv \vec f \;\mbox{ in } \; \cD, \quad \bar{\mathbf A} \nabla u \cdot \vec \nu = \vec f \cdot \vec \nu\;\mbox{ on }\; \partial \cD.
\]
Then there exists a constant $C=C(n,\lambda, \Lambda, \cD)$ such that for any $t>0$, we have
\[
\Abs{\set{x \in \cD : \abs{Du(x)}>t}} \le \frac{C}{t}\int_{\cD} \abs{\vec f}.
\]
\end{lemma}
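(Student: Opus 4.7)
The plan is to realize the solution map $T\colon \vec f \mapsto Du$ as a Calder\'on--Zygmund operator on $\cD$ and deduce the weak-type $(1,1)$ estimate from $L^2$-boundedness together with a Calder\'on--Zygmund decomposition of $\vec f$. Since the coefficients $\bar{\mathbf A}$ are constant and $\cD$ is smooth, the associated Green's matrix is a classical smoothing kernel, and the argument reduces to routine singular integral theory once the appropriate representation and kernel estimates are in hand.

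For the $L^2$-bound, the conormal problem is solvable modulo additive constants; normalizing $\int_\cD u = 0$ and testing the equation against $u$ yields
\[
\lambda\int_\cD \abs{Du}^2 \le \int_\cD \bar{\mathbf A}\nabla u\cdot \nabla u = \int_\cD \vec f\cdot \nabla u,
\]
hence $\|Du\|_{L^2(\cD)} \le \lambda^{-1}\|\vec f\|_{L^2(\cD)}$. To upgrade, I fix $t>0$ (large enough that $t > |\cD|^{-1}\|\vec f\|_{L^1}$; otherwise the bound is trivial by enlarging $C$) and perform a Calder\'on--Zygmund decomposition $\vec f = g + \sum_k b_k$ at height $t$, using dyadic cubes $Q_k \subset \cD$: $|g|\le Ct$ a.e., $\|g\|_{L^2}^2 \le C t\|\vec f\|_{L^1}$, $\int b_k = 0$, $\supp b_k \subset Q_k$, $|Q_k|^{-1}\int_{Q_k}|\vec f|\le C_n t$, the $Q_k$ are disjoint, and $\sum_k|Q_k|\le t^{-1}\|\vec f\|_{L^1}$. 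Splitting $u = u_g + u_b$ accordingly, Chebyshev and the $L^2$-bound give $|\{|Du_g|>t/2\}|\le C t^{-1}\|\vec f\|_{L^1}$, while a fixed dilation $\Omega^* = \bigcup_k Q_k^*$ of the exceptional set satisfies $|\Omega^*|\le C t^{-1}\|\vec f\|_{L^1}$.

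It remains to estimate $Du_b$ on $\cD\setminus\Omega^*$. I would use the Green's matrix $G(x,y)$ of $(\bar{\mathbf A},\cD)$ with homogeneous conormal data to write $Du_b(x) = -\int_\cD \nabla_x\nabla_y G(x,y)\,\vec b(y)\,dy$ (one integration by parts; the boundary terms vanish by the conormal condition on both $u_b$ and $G(x,\cdot)$). Setting $K(x,y) = -\nabla_x\nabla_y G(x,y)$, one uses the standard pointwise bounds $|K(x,y)|\le C|x-y|^{-n}$ and $|\nabla_y K(x,y)|\le C|x-y|^{-n-1}$, which yield the H\"ormander condition $\int_{|x-y_0|\ge 2|y-y_0|}|K(x,y)-K(x,y_0)|\,dx \le C$. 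Using $\int b_k = 0$ to replace $K(x,y)$ by $K(x,y) - K(x,y_k)$, with $y_k$ the center of $Q_k$, Fubini combined with H\"ormander gives $\sum_k \int_{\cD\setminus\Omega^*}|Du_{b_k}|\le C\sum_k\|b_k\|_{L^1}\le C\|\vec f\|_{L^1}$. Chebyshev closes the bad part, and combining the three contributions delivers the claim.

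The main obstacle is proving the size and gradient estimates for $K(x,y)$ \emph{up to} $\partial\cD$: the interior bounds are immediate for constant-coefficient elliptic operators, but one also needs them when $x$ or $y$ approaches the boundary. On the flat portion $T(0,1/2)$, one can construct the Green's matrix by first diagonalizing $\bar{\mathbf A}$ by a linear change of variables and then reflecting the fundamental solution evenly across the hyperplane. On the curved smooth portion of $\partial\cD$, the required estimates follow from classical boundary layer potential theory or Agmon--Douglis--Nirenberg-type estimates for smooth Neumann data, using crucially that $\partial\cD$ is $C^\infty$ and $\bar{\mathbf A}$ is constant. Once these pointwise kernel bounds are in place, the Calder\'on--Zygmund machinery above concludes the proof with a constant $C$ depending only on $n$, $\lambda$, $\Lambda$, and $\cD$.
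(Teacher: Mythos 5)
Your approach is correct in principle but is genuinely different from the paper's. You pursue the classical Calder\'on--Zygmund route: realize $T\colon \vec f \mapsto Du$ as a singular integral operator with kernel $K(x,y)=-\nabla_x\nabla_yG(x,y)$ built from the Neumann Green's matrix, establish the standard size and H\"ormander bounds for $K$ up to $\partial\cD$, and then run the CZ decomposition plus Chebyshev. The paper instead invokes a kernel-free criterion (Lemma~4.1 of \cite{DEK17}): one only has to show, for any mean-zero $\vec b$ supported in $B(\bar y,r)\cap\cD$, the off-support $L^1$ estimate $\int_{\cD\setminus B(\bar y,2r)}|Du|\le C\int|\vec b|$, which they get by a clean duality argument --- pair $Du$ against a test field $\vec g$ supported in a dyadic annulus, solve the adjoint conormal problem with data $\vec g$, exploit the mean-zero property to subtract $\overline{Dv}$, and use the local up-to-boundary $L^\infty$--$L^2$ estimate $\|D^2v\|_{L^\infty}\lesssim R^{-1-n/2}\|Dv\|_{L^2}$ for the homogeneous constant-coefficient adjoint equation. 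Both arguments ultimately rest on boundary regularity for the constant-coefficient adjoint problem, but the paper's version only needs a local scaled $L^\infty$ bound for $D^2v$ --- no Green's matrix, no pointwise kernel estimates, no CZ decomposition on the non-dyadic domain $\cD$.

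Where your proposal is incomplete is exactly where you flag it: you sketch, but do not prove, the pointwise bounds $|K(x,y)|\lesssim|x-y|^{-n}$ and $|\nabla_yK(x,y)|\lesssim|x-y|^{-n-1}$ uniformly up to $\partial\cD$. These are true for the conormal Green's matrix of a constant-coefficient operator on a smooth domain, but establishing them (existence of the Neumann Green's matrix modulo constants, plus derivative estimates near the curved part of $\partial\cD$) is a nontrivial chunk of work, comparable in size to the entire argument the paper gives. There is also a minor point you should make explicit: the CZ decomposition must be carried out for $\vec f$ on the bounded non-cube domain $\cD$ (e.g.\ by extending by zero and using dyadic cubes of $\bR^n$, discarding cubes outside $\cD$), and the representation $Du_b(x)=\int K(x,y)\vec b(y)\,dy$ for $x$ away from $\supp\vec b$ needs a line of justification given that $u_b$ is only a $W^{1,2}$ weak solution and $G$ is a Neumann Green's function normalized modulo constants. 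None of this is fatal, but as written the proposal replaces the paper's short self-contained argument with an appeal to a body of Green's-function theory that is not developed.
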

\begin{proof}
Since $u$ is unique up to a constant, we see that the map $T: \vec f \mapsto Du$ is well defined and is a bounded linear operator on $L^2(\cD)$.
We modify the proof of \cite[Lemma~2.2]{DK17} using \cite[Lemma~4.1]{DEK17}.
Let $\vec b \in L^2(\cD)$ be supported in $B(\bar y, r) \cap \cD$ with mean zero, where $\bar y \in \cD$ and $0<r< \frac12 \diam \cD$.
Suppose $u \in W^{1,2}(\cD)$ is a weak solution (unique up to a constant) of
\[
\dv (\bar {\mathbf A} \nabla u)= \dv \vec b\;\text{ in } \; \cD, \quad \bar{\mathbf A} \nabla u \cdot \vec \nu = \vec b \cdot \vec \nu\;\mbox{ on }\; \partial \cD.
\]
By \cite[Lemma~4.1]{DEK17}, it is enough to show that
\[
\int_{\cD\setminus B(\bar y, 2r)} \abs{Du} \le C \int_{B(\bar y, r)\cap \cD} \abs{\vec b}.
\]
For any $R \ge 2r$ such that $\cD \setminus B(\bar y, R) \neq \emptyset$ and $\vec g \in C_c^{\infty}((B(\bar y, 2R) \setminus B(\bar y, R)) \cap \cD),$ let $v \in W^{1,2}(\cD)$ be a weak solution (unique up to a constant) of
\[
\dv (\bar {\mathbf A}\tran \nabla v)= \dv \vec g\;\text{ in } \; \cD, \quad \bar{\mathbf A}\tran \nabla v \cdot \vec \nu = \vec g \cdot \vec \nu\;\mbox{ on }\; \partial \cD.
\]
Then, we have the following equality
\[
\int_{\cD}Du \cdot \vec g = \int_{\cD} \vec b \cdot Dv = \int_{B(\bar y, r) \cap \cD}\vec b \cdot (Dv- \overline{Dv}_{B(\bar y, r) \cap \cD}).
\]
Therefore we get, by the mean value theorem,
\begin{align*}
\Abs{\int_{(B(\bar y,2R)\setminus B(\bar y, R)) \cap \cD}Du \cdot \vec g} &\le \norm{\vec b}_{L^1(B(\bar y, r) \cap \cD)}\norm{Dv- \overline{Dv}_{B(\bar y, r) \cap \cD}}_{L^{\infty}(B(\bar y, r)\cap \cD)} \\
& \le 2r \norm{\vec b}_{L^1(B(\bar y, r) \cap \cD)}\norm{D^2v}_{L^{\infty}(B(\bar y, r)\cap \cD)}.
\end{align*}
Note that $\dv (\bar {\mathbf A}\tran \nabla v) =0$ in $B(\bar y, R) \cap \cD$ and $r \le \frac12 R$.
Since $\bar {\mathbf A}$ is constant and the boundary $\partial \cD$ is smooth, we have
\begin{align*}
\norm{D^2v}_{L^{\infty}(B(\bar y, r) \cap \cD)} &\le CR^{-1-\frac{n}{2}}\norm{Dv}_{L^2(B(\bar y, R) \cap \cD)} \le CR^{-1-\frac{n}{2}}\norm{Dv}_{L^2(\cD)} \\
& \le CR^{-1-\frac{n}{2}}\norm{\vec g}_{L^2(\cD)} = CR^{-1-\frac{n}{2}}\norm{\vec g}_{L^{2}((B(\bar y,2R)\setminus B(\bar y, R)) \cap \cD)}.
\end{align*}
Therefore, by the duality, we have
\[
\norm{Du}_{L^2((B(\bar y, 2R)\setminus B(\bar y, R)) \cap \cD)} \le CrR^{-1-\frac{n}{2}}\norm{\vec b}_{L^1(B(\bar y, r)\cap \cD)}
\]
and hence by H\"older's inequality we get
\[
\norm{Du}_{L^1((B(\bar y, 2R)\setminus B(\bar y, R)) \cap \cD)} \le CrR^{-1}\norm{\vec b}_{L^1(B(\bar y, r)\cap \cD)}.
\]
Now let $N>0$ be the smallest positive integer such that $\cD \subset B(\bar y, 2^{N+1}r)$.
By taking $R = 2r, 4r,\ldots, 2^{N}r$ in the above, we get
\[
\int_{\cD \setminus B(\bar y, 2r)} \abs{Du} \le  C\sum_{k=1}^N2^{-k}\norm{\vec b}_{L^1(B(\bar y, r) \cap \cD)} \le C \int_{B(\bar y, r) \cap \cD}\abs{\vec b}.
\]
We note that $C$ depends only on $n$, $\lambda$, $\Lambda$, and $\cD$.
Thus, we see that $T$ satisfies the hypotheses of Lemma~4.1 of \cite{DEK17}, and the proof is complete.
\end{proof}

\begin{lemma}					\label{lem2.6}
Let $\bar{\mathbf A} = (\bar a^{ij})$ be a constant symmetric matrix satisfying \eqref{ellipticity} and \eqref{boundedness}.
For $f \in L^2(B^{+}_1)$, let $u \in W^{2,2}(B^{+}_1)$ be a strong solution of the mixed problem
\begin{equation}				\label{mixed_prob}
\bar a^{ij} D_{ij} u=f\;\text{ in }\; B^{+}_1,\quad u=0\;\text{ on }\; \partial B_1 \cap \set{ x^n >0},\quad D_n u=0  \;\text{ on }\; T(0,1).
\end{equation}
Then there exists a constant $C=C(n,\lambda, \Lambda)$ such that for any $t>0$, we have
\[
\Abs{\set{x \in B^{+}_1 : \abs{D^2 u(x)} >t}}  \le \frac{C}{t} \int_{B^{+}_1} \abs{f}.
\]
\end{lemma}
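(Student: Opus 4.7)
The plan is to verify the hypotheses of \cite[Lemma~4.1]{DEK17} for the linear operator $T: f \mapsto D^2 u$, paralleling the structure of Lemma~\ref{lem2.5} but in the nondivergence setting with second-order derivatives. The $L^2$ boundedness $\norm{D^2 u}_{L^2(B_1^+)} \le C \norm{f}_{L^2(B_1^+)}$ follows from the standard constant-coefficient $W^{2,2}$ theory for the mixed problem (using, e.g., Fourier methods in the tangential directions together with the equation itself to recover $D_{nn} u$). So $T$ is a bounded linear operator on $L^2(B_1^+)$, and it remains to establish the atom cancellation: for $b \in L^2(B_1^+)$ supported in $B(\bar y, r) \cap B_1^+$ with $\int b = 0$,
\[
\int_{B_1^+ \setminus B(\bar y, 2r)} \abs{D^2 u} \le C \int_{B(\bar y, r) \cap B_1^+} \abs{b},
\]
where $u$ is the corresponding solution to \eqref{mixed_prob}.

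I would proceed by dyadic annular duality. For each $R \ge 2r$ with $B_1^+ \setminus B(\bar y, R) \ne \emptyset$ and matrix-valued test function $g = (g^{ij}) \in C_c^\infty\bigl((B(\bar y, 2R) \setminus B(\bar y, R)) \cap B_1^+\bigr)$, let $v \in W^{2,2}(B_1^+)$ solve the adjoint mixed problem
\[
\bar a^{kl} D_{kl} v = \sum_{i,j} D_{ij} g^{ij} \text{ in } B_1^+, \quad v = 0 \text{ on } \partial B_1 \cap \set{x^n > 0},
\]
with the oblique boundary condition $2\sum_{j<n} \bar a^{nj} D_j v + \bar a^{nn} D_n v = 0$ on $T(0,1)$. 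This particular condition is derived by writing out the Green-type identity $\int_{B_1^+} (u \bar a^{ij} D_{ij} v - v \bar a^{ij} D_{ij} u)\,dx = \int_{\partial B_1^+} (u\,\partial_\nu^A v - v\,\partial_\nu^A u)\,dS$, substituting $u = v = 0$ on the upper hemisphere and $D_n u = 0$ on $T$, performing a single tangential integration by parts along $T$ (with $u = 0$ on $\partial T$ by continuity), and requiring cancellation; the condition is genuinely oblique because $\bar a^{nn} \ge \lambda > 0$. Two further integrations by parts on the $g$-side yield $\int D_{ij} u\, g^{ij} = \int u \sum_{i,j} D_{ij} g^{ij}$ with no boundary contribution since $g$ is compactly supported in $B_1^+$. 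Combining these with $\int b = 0$ produces the key identity
\[
\int_{B_1^+} D_{ij} u \, g^{ij} = \int_{B(\bar y, r) \cap B_1^+} b\, (v - \bar v),
\]
where $\bar v$ denotes the average of $v$ over $B(\bar y, r) \cap B_1^+$.

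Since $g$ is supported outside $B(\bar y, R)$, $v$ satisfies the homogeneous adjoint equation on $B(\bar y, R) \cap B_1^+$ with the Dirichlet and oblique conditions on the smooth portions of $\partial B_1^+$ within this ball. Interior Cauchy estimates together with constant-coefficient boundary regularity for the oblique problem then give
\[
\norm{Dv}_{L^\infty(B(\bar y, R/2) \cap B_1^+)} \le C R^{-1-n/2} \norm{v}_{L^2(B(\bar y, R) \cap B_1^+)} \le C R^{-1-n/2} \norm{g}_{L^2(B_1^+)},
\]
the last inequality being the $L^2$ stability of the adjoint problem. Together with the mean value estimate $\norm{v - \bar v}_{L^\infty(B(\bar y, r) \cap B_1^+)} \le 2r\, \norm{Dv}_{L^\infty}$ and H\"older/duality, this gives $\norm{D^2 u}_{L^1((B(\bar y, 2R) \setminus B(\bar y, R)) \cap B_1^+)} \le C (r/R) \norm{b}_{L^1}$, which sums geometrically over dyadic $R = 2r, 4r, \ldots, 2^N r$ to the atom estimate. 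The main obstacle I anticipate is the pointwise Lipschitz regularity of $v$ up to the edge $\partial B_1 \cap \set{x^n = 0}$, where the Dirichlet and oblique conditions meet; this reduces to the fact that the corner singularity exponent for this constant-coefficient configuration is strictly greater than $1$ because the oblique direction $\vec\beta = (2\bar a^{n1}, \ldots, 2\bar a^{n,n-1}, \bar a^{nn})$ meets the Dirichlet interface transversally.
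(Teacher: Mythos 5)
Your duality set-up is structurally sound --- the adjoint boundary condition $2\sum_{j<n}\bar a^{nj}D_jv+\bar a^{nn}D_nv=0$ on $T(0,1)$ is exactly the right one (it is the conormal condition for the divergence-form operator the paper builds) --- but the argument has a genuine gap at the point you yourself flag: the claim that $v$ is Lipschitz up to the edge $\partial B_1\cap\set{x^n=0}$ because ``the corner singularity exponent is strictly greater than $1$.'' This is not true in general. After the affine change of variables that reduces $\bar{\mathbf A}$ to the identity (a shear preserving $\set{x^n=0}$), the hemisphere meets the flat part at an opening angle $\omega$ that varies along the edge and is obtuse at some points whenever $\bar a^{nj}\neq 0$; for a mixed Dirichlet--Neumann wedge the leading exponent is $\pi/(2\omega)$, which is $<1$ for $\omega>\pi/2$, so $Dv$ genuinely blows up at such edge points. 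Since the atom $b$ produced by the Calder\'on--Zygmund decomposition can sit arbitrarily close to the edge, you cannot avoid this region, and the step $\norm{v-\bar v}_{L^\infty(B(\bar y,r)\cap B_1^+)}\le 2r\norm{Dv}_{L^\infty}$ fails. A secondary gap is the asserted ``$L^2$ stability of the adjoint problem'': the bound $\norm{v}_{L^2}\le C\norm{\mathbf g}_{L^2}$ for a double-divergence right-hand side is not standard elliptic theory and needs a specific input (the paper cites \cite[Lemma 2]{EM17} for it).

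The paper's proof avoids both problems by a reflection. Normalizing $\bar a^{nn}=1$, it replaces $\bar a^{nj}$ by $-\bar a^{nj}$ for $x^n<0$ to get coefficients $\hat{\mathbf A}(x^n)$, extends $f$ evenly, and observes that the unique $W^{2,2}$ solution of the Dirichlet problem in the full ball $B_1$ is even in $x^n$, hence coincides with $u$ on $B_1^+$. This removes the Dirichlet--oblique corner entirely: the only boundary left is $\partial B_1$ with a pure Dirichlet condition, and $\set{x^n=0}$ becomes an interior interface. The duality is then run with a divergence-form adjoint $D_i(\tilde a^{ij}D_jv)=\dv^2\mathbf g$ chosen so that $D_i(\tilde a^{ij}D_jv)=D_{ij}(\hat a^{ij}v)$, and --- crucially --- only the De Giorgi--Nash--Moser \emph{H\"older} estimate for $v$ is used, giving a dyadic decay $(r/R)^{\mu}$ that still sums geometrically; no Lipschitz bound on $v$ is needed anywhere. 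If you want to salvage a direct half-ball argument, you would have to weaken your requirement on $v$ from Lipschitz to H\"older up to the mixed corner and prove that H\"older estimate, which is precisely the delicate work the reflection trick is designed to bypass.
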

\begin{proof}
Without loss of generality, we may assume that $\bar a^{nn}=1$.
We introduce a new matrix valued function $\hat{\mathbf A}=\hat{\mathbf A}(x^n)$ as follows.
When $i=j=n$ or $i, j \in \set{1,\ldots, n-1}$,
\[
\hat a^{ij}(x^n) = \bar a^{ij}.
\]
When $j=1, \ldots, n-1$,
\[
\hat a^{nj}(x^n)=\hat a^{jn}(x^n) =
\begin{cases}
\bar a^{nj} \; &\text{if} \;\; x^n \ge 0, \\
-\bar a^{nj} \; &\text{if} \;\; x^n <0.
\end{cases}
\]
It is easy to check that $\hat{\mathbf A}$ satisfies the conditions \eqref{ellipticity} and \eqref{boundedness}.
Let $\hat f$ be an even extension of $f$ and let $\hat u \in W^{2,2}(B_1) \cap W^{1,2}_0(B_1)$ be a unique solution of
\begin{equation}					\label{eq1550w}
\hat a^{ij} D_{ij}  u= \hat f \;\mbox{ in } \; B_1, \quad u=0 \;\mbox{ on }\; \partial B_1.
\end{equation}
See \cite{DongKrylov10} for the solvability of \eqref{eq1550w}.
By the uniqueness, it is straightforward to see that $\hat u$ is even with respect to $x^n$ coordinate, which implies that $D_n \hat u= 0$ on $T(0,1)$.
Then by the uniqueness of the mixed problem \eqref{mixed_prob}, we conclude that $u \equiv \hat u$ in $B^{+}_1$.
Therefore, it is enough to show
\[
\Abs{\set{x \in B_1 : \abs{D^2 \hat u(x)} >t}}  \le \frac{C}{t} \int_{B_1} \abs{\hat f}.
\]

Fix $\bar y \in B_1$, $0<r<\frac12$, and let $b \in L^2(B_1)$ be supported in $B(\bar y, r) \cap B_1$ with mean zero.
Let $\tilde u \in W^{2,2}(B_1) \cap W^{1,2}_0(B_1)$ be a solution of
\begin{equation}					\label{eq1400sa}
\hat a^{ij} D_{ij} u= b\;\mbox{ in }\; B_1;\quad
u=0 \;\mbox{ on } \; \partial B_1,
\end{equation}
the solvability of which is stated in \cite[p. 6483]{DongKrylov10}.

For any $R\ge 2r$ such that $B_1\setminus B(\bar y, R) \neq \emptyset$ and $\mathbf{g}=(g^{kl}) \in C^\infty_c((B(\bar y,2R)\setminus B(\bar y,R))\cap B_1)$, let $v \in W^{1,2}_0(B_1)$  be a weak solution of
\[
D_i(\tilde a^{ij} D_j v) = \dv^2\mathbf g \;\mbox{ in } \; B_1, \quad v=0 \;\mbox{ on }\; \partial B_1,
\]
where $\tilde{\mathbf A}=(\tilde a^{ij})$ is defined as follows
\begin{gather*}
\tilde a^{nn}=1;\quad
\tilde a^{ij}= \hat a^{ij}\;\text{ for }\; i, j \in \set{1,\ldots, n-1};\\
\tilde a^{nj}=2\hat a^{nj}\;\text{ and }\;\tilde a^{jn}=0\;\text{ for }\;j=1, \ldots, n-1.
\end{gather*}
It is easy to check that $\tilde{\mathbf A}$ satisfies the ellipticity and boundedness conditions \eqref{ellipticity} and \eqref{boundedness} (with new constants $\tilde \lambda$ and $\tilde \Lambda$ determined by $\lambda$ and $\Lambda$).
Since $\mathbf{g}= 0$ in $B(\bar y, R) \cap B_1$ and $r\le R/2$, we find
\[
D_i(\tilde a^{ij} D_j v) = 0 \quad\text{in}\quad B(\bar y, R) \cap B_1,
\]
and thus, by the De Giorgi-Nash-Moser estimate (up to the boundary) we see that $v$ is  H\"older continuous in $B(\bar y, r) \cap B_1$ and
\begin{equation}					\label{eq1421sa}
[v]_{\mu;B(\bar y,r) \cap B_1} \le C R^{-\mu-\frac{n}2}\norm{v}_{L^2(B(\bar y, R) \cap B_1)}
\end{equation}
for some constants $\mu \in (0,1)$ and $C>0$ depending only on $n$, $\lambda$, and $\Lambda$.

On the other hand, observe that
\begin{align*}
\sum_{i,j =1}^n D_{i}(\tilde a^{ij}D_jv) &= \sum_{i,j=1}^{n-1}D_{i} (\hat a^{ij} D_j v) + 2 \sum_{j=1}^{n-1}D_{n}(\hat a^{nj}D_jv) +D_{n}(D_n v)\\
&= \sum_{i,j=1}^{n-1}D_{ij} (\hat a^{ij} v) + 2 \sum_{j=1}^{n-1}D_{nj}(\hat a^{nj}v) +D_{nn}v = \sum_{i,j =1}^n D_{ij}(\hat a^{ij}v).
\end{align*}
Here, we used that $\hat a^{ij}=\hat a^{ij}(x^n)$ and $\hat a^{nn}=1$.
Therefore, we see that $v$ is also an adjoint solution of
\begin{equation}					\label{eq1616w}
D_{ij}(\hat a^{ij} v) = \dv^2\mathbf g \;\mbox{ in } \; B_1, \quad v=0 \;\mbox{ on }\; \partial B_1
\end{equation}
and hence by \cite[Lemma 2]{EM17}, we have
\begin{equation}					\label{eq1423sa}
\norm{v}_{L^2(B_1)} \le C \norm{\mathbf g}_{L^2(B_1)}.
\end{equation}
By \eqref{eq1400sa} and \eqref{eq1616w} and the hypothesis on $b$, we have the identity
\[
\int_{B_1} D_{ij} \tilde u \, g^{ij} = \int_{B_1} vb = \int_{B(\bar y,r) \cap B_1}b(v - \bar v_{B(\bar y, r) \cap B_1}).
\]
Then by using \eqref{eq1421sa} and \eqref{eq1423sa}, we have
\begin{align*}
\int_{(B(\bar y, 2R) \setminus B(\bar y, R)) \cap B_1} D_{ij} \tilde u\, g^{ij} &\le \norm{b}_{L^1(B(\bar y, r)\cap B_1)} [v]_{\mu;B(\bar y,r) \cap B_1} (2r)^{\mu} \\
& \le C\left(\frac{r}R\right)^{\mu}R^{-\frac{n}2}\norm{b}_{L^1(B(\bar y, r)\cap B_1)}\norm{\mathbf g}_{L^2((B(\bar y, 2R) \setminus B(\bar y, R)) \cap B_1)}.
\end{align*}
The rest of the proof is almost the same as that of Lemma~\ref{lem2.5} and omitted.
\end{proof}

\begin{lemma}					\label{lem2.7}
Let $\bar{\mathbf A} = (\bar a^{ij})$ be a constant symmetric matrix satisfying \eqref{ellipticity} and \eqref{boundedness}.
Suppose $u\in W^{2,2}(B_1^{+})$ satisfies
\[
\bar a^{ij} D_{ij} u=0\;\text{ in }\; B^{+}_1,\quad D_n u=0  \;\text{ on }\; T(0,1).
\]
Then for any $p>0$, there exists a constant $C=C(n,\lambda, \Lambda, p)$ such that
\begin{equation}				\label{eq0957th}
\norm{D u}_{L^\infty(B_{1/2}^{+})} \le C  \left( \fint_{B_1^{+}}\, \abs{u}^p\right)^{\frac{1}{p}}.
\end{equation}
\end{lemma}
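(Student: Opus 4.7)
The plan is to first establish the estimate for $p=2$ by bootstrapping to $W^{k,2}$-regularity up to $T(0,1/2)$ for arbitrarily large $k$, and then extend to general $p>0$ by a standard iteration over nested balls. Throughout, $C$ denotes a constant depending only on $n$, $\lambda$, $\Lambda$, whose value may change from line to line.

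The first step is a Caccioppoli-type inequality: for $0<r<R\le 1$,
\[
\int_{B_r^+}\abs{Du}^2 \le \frac{C}{(R-r)^2}\int_{B_R^+} u^2.
\]
To prove this, I would test the equation against $\phi^2 u$ with $\phi(x)=\eta(\abs{x})$ a radial cutoff (chosen radial so that $D_n\phi=0$ on $T(0,1)$). Integration by parts produces the usual energy terms plus a boundary integral
\[
-\int_{T(0,1)}\sum_{j<n}\bar a^{nj}\,D_j u\cdot \phi^2 u\,dx',
\]
which arises because $D_n u=0$ is not the conormal condition. Since $\phi$ has compact support on $T(0,1)$ and $(D_j u)\,u=\tfrac12 D_j(u^2)$, we integrate by parts tangentially to transfer the derivative onto $\phi^2$, reducing the boundary term to $\int_{T(0,1)}\sum_{j<n}\bar a^{nj} u^2 \phi\, D_j\phi$, which is controlled by a standard trace inequality and absorbed into the interior energy. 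Handling this boundary contribution cleanly is the main technical point of the argument.

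Because $\bar{\mathbf A}$ is constant, for each $j\in\{1,\dots,n-1\}$ the tangential derivative $D_j u$ solves the same boundary value problem: $\bar a^{kl}D_{kl}(D_j u)=0$ in $B_1^+$ and $D_n(D_j u)=D_j(D_n u)=0$ on $T(0,1)$. Iterating the Caccioppoli estimate, using tangential difference quotients in the manner of Nirenberg to handle a priori regularity, yields
\[
\int_{B_r^+}\abs{D^\alpha u}^2 \le \frac{C_k}{(R-r)^{2k}}\int_{B_R^+} u^2
\]
for every multi-index $\alpha$ with $\abs{\alpha}=k$ and $\alpha_n=0$. The ellipticity identity $\bar a^{nn}D_{nn}u=-\sum_{(i,j)\ne(n,n)}\bar a^{ij}D_{ij}u$ then expresses $D_{nn}u$ in terms of second-order derivatives with at least one tangential direction; applying this identity repeatedly after differentiating the equation in $x^n$ (since $D_n u$, $D_n^2 u,\ldots$ all solve the same PDE), we control every $D^\alpha u$ in $L^2(B_{1/2}^+)$ by $\norm{u}_{L^2(B_1^+)}$. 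Choosing $k>n/2+1$, the Sobolev embedding $W^{k,2}(B_{1/2}^+)\hookrightarrow C^1(\overline{B_{1/2}^+})$ yields
\[
\norm{Du}_{L^\infty(B_{1/2}^+)}\le C\norm{u}_{L^2(B_1^+)}.
\]

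For the extension to general $p>0$, the case $p\ge 2$ follows from H\"older's inequality. For $0<p<2$, the same argument applied to $u$ itself (which satisfies the same hypotheses) also gives $\norm{u}_{L^\infty(B_r^+)}\le C(R-r)^{-\gamma}\norm{u}_{L^2(B_R^+)}$ for some $\gamma=\gamma(n)$ and all $1/2\le r<R\le 1$. Combining this with the interpolation $\norm{u}_{L^2(B_R^+)}\le\norm{u}_{L^\infty(B_R^+)}^{1-p/2}\norm{u}_{L^p(B_R^+)}^{p/2}$ and a standard iteration over nested balls $r_k=\tfrac12+2^{-k-1}$, we obtain $\norm{u}_{L^\infty(B_{1/2}^+)}\le C_p\norm{u}_{L^p(B_1^+)}$. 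Feeding this back into the $L^2\to L^\infty$ bound for $Du$ applied on a slightly smaller ball then yields the desired estimate.
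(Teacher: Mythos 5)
Your proposal is correct, and it takes a genuinely different route from the paper's. The paper's argument is a short reflection trick: it extends $u$ evenly across $T(0,1)$, modifies the off-diagonal entries $\bar a^{nj}$ ($j<n$) by the sign of $x^n$ so that the reflected function solves a nondivergence equation with coefficients $\hat{\mathbf A}(x^n)$ measurable in one variable, and then invokes the Kim--Krylov Lipschitz estimate for such operators; the step from $p=2$ to general $p>0$ is dismissed as ``standard.'' Your proof instead bootstraps $W^{k,2}$-regularity directly in the half-ball via Caccioppoli and tangential difference quotients (handling the non-conormal boundary term by a tangential integration by parts followed by a trace inequality), uses the constant-coefficient identity to recover normal derivatives order by order, and closes with Sobolev embedding and a nested-ball iteration for $0<p<2$. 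Both work; the paper's version buys brevity by outsourcing the hard estimate to an external Lipschitz theorem and a clever algebraic reflection (which is also reused in Lemma~\ref{lem2.6}), whereas yours buys self-containedness and avoids introducing the auxiliary discontinuous matrix $\hat{\mathbf A}$. The only place in your write-up I would flag for care is the boundary term in the Caccioppoli step: after transferring the tangential derivative to $\phi^2$, you are left with $\int_{T(0,1)}\bar a^{nj}u^2\phi D_j\phi$, and to push this into the interior you need to write it as $-\int_{B_1^+}D_n(u^2\phi D_j\phi)$ and absorb, which costs a factor $|D^2\phi|\lesssim(R-r)^{-2}$; this is fine but should be said, since a one-derivative trace bound alone does not suffice.
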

\begin{proof}
The estimate \eqref{eq0957th} can be deduced from \cite[Theorem~6.26]{GT}.
We give an alternative proof here.
Let $\hat u$ be an even extension of $u$ (with respect to $x^n$ coordinate) and $\hat{\mathbf A}$ be defined as in the proof of Lemma~\ref{lem2.6}.
Then $\hat u$ satisfies
\[
\hat a^{ij} D_{ij} \hat u =0 \quad\text{in} \quad B_1.
\]
Since $\hat{\mathbf A}=\hat{\mathbf A}(x^n)$, we have the Lipschitz estimate (see \cite{KimDKrylov07})
\[
\norm{D \hat u}_{L^\infty(B_{1/2})} \le C \norm{\hat u}_{L^2(B_{1})},
\]
from which \eqref{eq0957th} follows by standard argument.
\end{proof}

\section{Proof of Theorem~\ref{thm-main-conormal}}			\label{sec2}
We begin with the following proposition dealing with interior $C^1$ estimates.

\begin{proposition}			\label{prop3.1}
We have $u \in C^1(\overline{\Omega'})$ for any $\Omega'\subset\subset \Omega$.
\end{proposition}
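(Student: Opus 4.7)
The plan is to reduce Proposition~\ref{prop3.1} to the interior $C^1$ regularity already established in \cite{DEK17}. Since $C^1$ regularity is a purely local property, it suffices to show $u \in C^1(B(x_0, r))$ for a small ball around each interior point $x_0 \in \Omega$. Near such an $x_0$, the hypotheses of Theorem~\ref{thm-main-conormal} restrict to exactly the interior divergence form setting treated in \cite{DEK17}: Dini mean oscillation principal part $\mathbf A$, Dini mean oscillation drift $\vec a$, Dini continuous $a^0$, and $\vec b, c \in L^q_{\loc}$ with $q>n$, together with Dini mean oscillation data $\vec g$ and $f \in L^q_{\loc}$. The conormal boundary condition and the boundary regularity of $\partial\Omega$ play no role at interior points. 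Thus a direct application of the interior half of the main divergence form result of \cite{DEK17} yields $u \in C^1(B(x_0, r/2))$, and covering $\overline{\Omega'}$ by finitely many such balls finishes the proof.

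If a self-contained derivation from the purely principal-part result of \cite{DKS16} is desired, the argument would proceed in two steps. First, a standard bootstrap based on $L^p$ theory for divergence form equations with VMO (hence DMO) principal coefficients, combined with the assumption $\vec b, c, f \in L^q_{\loc}(\Omega)$ with $q>n$, upgrades $u \in W^{1,2}_{\loc}(\Omega)$ to $u \in W^{1,p}_{\loc}(\Omega)$ for some $p>n$, so that in particular $u$ is locally Hölder continuous. Second, one absorbs the zero-th order term inside the divergence by rewriting the equation as
\[
D_i(a^{ij} D_j u) = \dv \vec G + F, \qquad \vec G := \vec g - \vec a\, u, \quad F := f - b^i D_i u - cu.
\]
Since $u$ is locally Hölder (hence uniformly Dini) continuous, Lemma~\ref{lem1721th} shows that $\vec a\, u$ is of Dini mean oscillation, and hence so is $\vec G$; moreover $F \in L^s_{\loc}(\Omega)$ with $s > n$ by Hölder's inequality. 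One then invokes the interior $C^1$ estimate for such reduced equations, either directly from \cite{DEK17} or, after representing $F$ as the divergence of a Hölder continuous vector field via a Newtonian potential, from \cite{DKS16}, to conclude that $Du$ is continuous in a neighborhood of $x_0$.

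The step I expect to be the main obstacle is keeping track of how the different regularity classes interact: the lower order coefficients satisfy only the weaker $L^q$ condition, yet the reduced right-hand side of the rewritten equation must be of Dini mean oscillation for the results of \cite{DKS16, DEK17} to apply. This is precisely what the preliminary bootstrap to local Hölder regularity of $u$ achieves, after which Lemma~\ref{lem1721th} does the remaining bookkeeping to upgrade $\vec a u$ (and similarly $a^0 u$, should it appear in a variant of the argument) into the Dini mean oscillation class.
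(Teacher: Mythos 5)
Your second, self-contained argument is essentially the paper's proof: bootstrap to $W^{1,p}$ regularity and H\"older continuity of $u$, absorb $\vec a\,u$ into the divergence data via Lemma~\ref{lem1721th}, represent $f - b^i D_i u - cu$ as the divergence of a H\"older continuous field through a potential, and apply the interior $C^1$ result of \cite{DKS16}. The only (harmless) difference is that the paper carries out the $W^{1,p}$ bootstrap and the potential construction globally up to the boundary (using the conormal condition), because the resulting global H\"older continuity of $u$ and the field $\vec g''$ with $\vec g''\cdot\vec\nu=0$ on $\partial\Omega$ are reused in the boundary estimate of Proposition~\ref{prop3.2}, whereas your purely local version suffices for the interior statement itself.
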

\begin{proof}
Since the proof is very similar to that of \cite[Proposition~2.6]{DEK17}, we will only give an outline of the proof.
Since the coefficients $a^{ij}$ are continuous, the standard $W^{1,p}$ theory yields that $u\in W^{1,p}_{\loc}(\Omega)$ for any $p\in (1,\infty)$.
To see that $u\in W^{1,p}$ up to the boundary, we locally flatten the boundary so that $\vec \nu=-\vec e_n$ and the boundary condition becomes
\[
-\sum_{j=1}^n a^{nj}D_j u-a^n u =-g^n+g^0-a^0 u\quad \text{on}\quad  \Gamma \subset \set{x^n=0}.
\]
Note that if we set
\[
\tilde g^n(x)= \tilde g^n(x',x^n):=g^n(x',x^n)-g^0(x',0)+a^0(x',0)u(x',0),
\]
then we have $-D_n \tilde g^n=-D_n g^n$.
Therefore, by replacing $g^n$ with $\tilde g^n$, the above boundary condition reduces to the standard conormal boundary condition (see e.g.,  \cite[Theorem 5]{DKD10}).
Then we can apply the boundary $W^{1,p}$ theory and a bootstrap argument to conclude that $u\in W^{1,p}(\Omega)$ for any $p\in (1,\infty)$.

By the Morrey-Sobolev embedding, we have $u \in C^{0,\mu}(\overline \Omega)$ for any $\mu\in (0, 1)$.
Rewriting the equation, we have
\[
D_i(a^{ij}D_ju) = f - b^i D_i u - cu + D_i(g^i- a^i u).
\]
Let $\vec g' = \vec g - \vec a u$.
By Lemma~\ref{lem1721th}, we see that $\vec g'$ is of Dini mean oscillation.
Also, by taking a sufficiently large $p$ and using H\"older's inequality, we have $f-b^i D_i u-cu \in L^r(\Omega)$ for some $r\in (n, q)$.
We set $\vec g'' = \nabla v$, where $v$ solves
\[
\Delta v = f - b^i D_i u - cu \;\mbox{ in } \; \Omega,\quad \partial v/ \partial \nu = 0\;\mbox{ on }\;\partial\Omega.
\]
Then, we have $\vec g'' \in C^{0,\delta}(\overline \Omega)$ with $\delta = 1-\frac{n}{r}$.
Therefore, we see that $\vec g'$ and $\vec g''$ are of Dini mean oscillation and
\[
D_i(a^{ij}D_ju) =  \dv(\vec g' + \vec g'')\;\mbox{ in } \; \Omega.
\]
By \cite[Theorem 1.5]{DK17}, we conclude that $u \in C^1(\overline{\Omega'})$ for any $\Omega' \subset\subset \Omega$.
\end{proof}

Next, we prove $C^1$ estimate near the boundary.
Note that $\vec g''$ introduced in the proof of Proposition~\ref{prop3.1} satisfies $\vec g'' \cdot \vec \nu =0$ on $\partial \Omega$.
By the same reasoning explained just before \cite[Proposition~2.7]{DEK17} and replacing $g^n$ by $\tilde g^n$ after locally flattening the boundary so that $\vec \nu=-\vec e_n$ (note that $\tilde g^n$ is of Dini mean oscillation), we are reduced to prove the following.
\begin{proposition}					\label{prop3.2}
If $u \in W^{1,2}(B_4^+)$ is a weak solution of
\[
D_i(a^{ij} D_ju)= \dv \vec g \;\; \text{ in } \;\; B_4^{+}, \quad \mathbf{A} Du \cdot \vec e_n = \vec g \cdot \vec e_n \;\;\text{ on }\;\; T(0,4),
\]
then $u \in C^1(\overline B{}^+_1)$.
\end{proposition}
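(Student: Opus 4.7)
The plan is to adapt the boundary $C^1$-estimate developed for the Dirichlet problem in \cite{DEK17} to the conormal setting, via a Campanato-type iteration based on freezing the leading coefficient and the datum at each scale. Concretely, for an arbitrary point $\bar x\in T(0,2)$ and small $r>0$, I would set $\bar{\mathbf A}=\fint_{\cD(\bar x,r)}\mathbf A$ and $\bar{\vec g}=\fint_{\cD(\bar x,r)}\vec g$, and decompose $u=v+w$ on $\cD(\bar x,r)$, where $v$ solves the homogeneous constant-coefficient conormal problem
\[
\dv(\bar{\mathbf A}\nabla v)=0 \text{ in } \cD(\bar x,r),\quad \bar{\mathbf A}\nabla v\cdot \vec\nu=0 \text{ on } T(\bar x,r/2),
\]
with $v=u$ on the curved portion of $\partial\cD(\bar x,r)$, and $w=u-v$ then solves the constant-coefficient conormal problem
\[
\dv(\bar{\mathbf A}\nabla w)=\dv \vec h \text{ in } \cD(\bar x,r),\quad \bar{\mathbf A}\nabla w\cdot\vec\nu=\vec h\cdot\vec\nu \text{ on } T(\bar x,r/2),
\]
with $\vec h=(\mathbf A-\bar{\mathbf A})\nabla u+(\vec g-\bar{\vec g})$, and zero Dirichlet data on the curved part.

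For the smooth part $v$, an even reflection across $T(\bar x,r/2)$ (as in the proof of Lemma~\ref{lem2.6}) converts the conormal problem into a standard divergence-form equation with constant leading coefficient in a smooth domain; classical interior/boundary regularity then yields Lipschitz bounds for $Dv$ and a decay of the form $\mathrm{osc}_{\Omega(\bar x,\kappa r)} Dv \le C\kappa \,\|Dv\|_{L^\infty(B^+(\bar x,r/4))}$. For the error part $w$, Lemma~\ref{lem2.5} (applied on the smooth domain $\cD(\bar x,r)$, after scaling) delivers the weak-type bound
\[
\bigl|\{x\in\cD(\bar x,r):|Dw(x)|>t\}\bigr|\le \frac{C}{t}\int_{\cD(\bar x,r)}|\vec h|.
\]
Using the standard $L^p$-with-$p<1$ trick, this $L^{1,\infty}$-bound translates into an $L^p$-mean estimate on $Dw$, and hence, since $\int|\vec h|\lesssim r^n\bigl(\omega_{\mathbf A}(r)\|Du\|_{L^\infty(\Omega(\bar x,r))}+\omega_{\vec g}(r)\bigr)$, into a quantitative control on the mean oscillation of $Du$ at the scale $\kappa r$ by a multiple of $\omega_{\mathbf A}(r)$ and $\omega_{\vec g}(r)$.

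Combining these two ingredients yields a Campanato-type recursion
\[
\phi(\bar x,\kappa r)\le C\kappa^{\alpha}\,\phi(\bar x,r)+C\bigl(\omega_{\mathbf A}(r)\|Du\|_{L^\infty}+\omega_{\vec g}(r)\bigr),
\]
where $\phi(\bar x,r):=\inf_{\vec q\in\bR^n}\bigl(\fint_{\Omega(\bar x,r)}|Du-\vec q|^p\bigr)^{1/p}$ for a fixed $p\in(0,1)$. Together with the interior version already used in Proposition~\ref{prop3.1}, iterating along geometric scales and invoking the Dini hypotheses on $\omega_{\mathbf A}$ and $\omega_{\vec g}$ (together with Lemma~\ref{lem1822sat} to replace these moduli by slightly better-behaved ones) shows that $\phi(\bar x,r)\to 0$ uniformly in $\bar x$ with a Dini-type modulus. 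This produces the modulus of continuity for $Du$ up to $T(0,1)$ and hence gives $u\in C^1(\overline{B_1^+})$. The main technical obstacle is closing the loop in the above recursion: the error term features $\|Du\|_{L^\infty}$, so one must first establish an a~priori boundary Lipschitz bound for $u$ before upgrading to continuity of $Du$; this is achieved by running the same iteration once with a coarser decay exponent, exactly as in the interior analysis of \cite{DKS16}.
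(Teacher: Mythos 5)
Your high-level strategy is the one the paper actually uses: freeze $\mathbf A$ and $\vec g$, split $u=v+w$, use a weak-type $(1,1)$ estimate together with the ``$L^p$ with $p<1$'' trick to control $Dw$, and run a Campanato-type iteration on $\phi(\bar x,r)=\inf_{\vec q}\bigl(\fint|Du-\vec q|^p\bigr)^{1/p}$. You also correctly flag the need for an a priori Lipschitz bound to close the recursion. However, there is a genuine gap in how you set up the decomposition, which makes your appeal to Lemma~\ref{lem2.5} invalid as written.

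You define $v$ as the solution of a \emph{mixed} boundary value problem (homogeneous conormal condition on $T(\bar x,r/2)$, Dirichlet condition $v=u$ on the curved part of $\partial\cD(\bar x,r)$), and then let $w=u-v$; your $w$ therefore solves a mixed problem with conormal data on the flat part and zero Dirichlet data on the curved part. Lemma~\ref{lem2.5} is not about that problem: it gives a weak-type $(1,1)$ bound for solutions of the \emph{pure conormal} problem $\dv(\bar{\mathbf A}\nabla w)=\dv\vec f$ in $\cD$, $\bar{\mathbf A}\nabla w\cdot\vec\nu=\vec f\cdot\vec\nu$ on all of $\partial\cD$. A weak-type estimate for your mixed problem is not established anywhere in the paper, and the duality argument behind Lemma~\ref{lem2.5} does not transfer automatically because the adjoint problem and the test-function class change. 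The paper sidesteps this entirely by reversing the roles: it defines $w$ as the solution of the \emph{pure conormal} problem on all of $\partial\cD(\bar x,2r)$ with data $-(\mathbf A-\bar{\mathbf A})\nabla u+(\vec g-\bar{\vec g})$ and then sets $v=u-w$. One then checks (using the equation and boundary condition for $u$) that $v$ satisfies the constant-coefficient equation $\dv(\bar{\mathbf A}\nabla v)=0$ in $B^+(\bar x,r)$ with $\bar{\mathbf A}\nabla v\cdot\vec e_n=\bar{\vec g}\cdot\vec e_n$ on $T(\bar x,r)$, and Lemma~\ref{lem2.5} applies verbatim to $w$. You should swap the order of the decomposition to match this.

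A secondary, more minor point: for the regularity of $v$ you propose an even reflection across the flat boundary. For a general constant $\bar{\mathbf A}$ with nonzero $\bar a^{nj}$, $j<n$, a naive even reflection does not reproduce the same constant-coefficient equation on the other side; one has to flip the sign of $\bar a^{nj}$ in $\{x^n<0\}$, which produces coefficients depending (discontinuously) on $x^n$, so ``classical interior regularity'' is not the right reference. In the divergence-form case the paper uses a cleaner route: for $k<n$, $D_k v-c$ solves the same homogeneous constant-coefficient conormal problem, which yields $\|DD_k v\|_{L^\infty}\lesssim r^{-1}(\fint|D_kv-c|^p)^{1/p}$, and $D_{nn}v$ is then recovered algebraically from the equation. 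This gives directly the comparison $\|D^2v\|_{L^\infty(B^+(\bar x,r/2))}\le Cr^{-1}\inf_{\vec q}(\fint_{B^+(\bar x,r)}|Dv-\vec q|^p)^{1/p}$ that feeds the iteration, which is a bit stronger and tidier than the oscillation bound you wrote.

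Once those two points are corrected, the rest of your outline (the $L^{1,\infty}\to L^p$ step, the recursion $\phi(\bar x,\kappa r)\le C\kappa\,\phi(\bar x,r)+C(\omega_{\mathbf A}(2r)\|Du\|_\infty+\omega_{\vec g}(2r))$, the use of Lemma~\ref{lem1822sat}, and the reduction to the iteration lemmas of \cite{DEK17}) is exactly the paper's argument.
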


The rest of this section is devoted to the proof of Proposition~\ref{prop3.2}.
We shall assume $u \in C^1(\overline B{}^+_3)$ and derive an a priori estimate of the modulus of continuity of $Du$.
We fix some $p \in (0,1)$ and introduce
\[
\phi(x,r) :=\inf_{\vec q \in \bR^n} \left(\fint_{B(x,r)\cap B_4^+} \abs{Du-\vec q}^p \right)^{\frac1p}.
\]

We shall derive an estimate for $\phi(\bar x, r)$ for $\bar x \in T(0,3)$ and $0< r \le \frac12$.
Recall the notation $\cD(\bar x, r)$ introduced at the beginning of this section.
We split $u=v+w$, where $w \in W^{1,2}(\cD(\bar x, 2r))$ is a weak solution of the problem
\begin{align*}
\dv (\bar{\mathbf A} \nabla w) &= - \dv((\mathbf{A} - \bar{\mathbf A})\nabla u) +\dv(\vec g - \bar{\vec g}) \;\text{ in } \; \cD(\bar x, 2r),\\
\bar {\mathbf A} \nabla w \cdot \vec \nu& = -(\mathbf{A} - \bar{\mathbf A})\nabla u \cdot \vec \nu +(\vec g - \bar{\vec g})\cdot \vec \nu \;\text{ on }\; \partial \cD(\bar x, 2r),
\end{align*}
where $\bar{\mathbf A} = \bar{\mathbf A}_{B^{+}(\bar x, 2r)}$ and $\bar{\vec g} = \bar{\vec g}_{B^{+}(\bar x, 2r)}$.
By Lemma~\ref{lem2.5} with scaling, we see that
\[
\Abs{\set{x \in B^{+}(\bar x,r) :  \abs{Dw(x)}> t}} \le \frac{C}{t} \left(\, \norm{Du}_{L^{\infty}(B^{+}(\bar x, 2r))}\int_{B^{+}(\bar x, 2r)}\abs{\mathbf A-\bar{\mathbf A}}+\int_{B^{+}(\bar x, 2r)}\abs{\vec g-\bar {\vec g}}\, \right).
\]
Then, we have (see \cite[(2.11)]{DK17})
\begin{equation}				\label{eq1323th}
\left(\fint_{B^{+}(\bar x,r)} \abs{Dw}^p\right)^{\frac1p} \le C\omega_{\mathbf A}(2r)\,\norm{Du}_{L^{\infty}(B^{+}(\bar x, 2r))}+C\omega_{\vec g}(2r).
\end{equation}
Note that $v= u-w$ satisfies
\[
\dv( \bar{\mathbf  A} \nabla v)= \dv \vec {\bar g}=0 \;\mbox{ in } \; B^{+}(\bar x, r), \quad \bar {\mathbf A} \nabla v \cdot \vec e_n = \bar{\vec g}\cdot \vec e_n \;\mbox{ on }\; T(\bar x, r).
\]
Then for any $c \in \bR$ and $k=1, 2, \ldots, n-1$, $\tilde v= D_k v - c$
satisfies
\[
\dv( \bar{\mathbf  A} \nabla \tilde v)= 0 \;\mbox{ in } \;  B^{+}(\bar x, r), \quad \bar {\mathbf A} \nabla \tilde v \cdot \vec e_n = 0 \;\mbox{ on }\; T(\bar x, r).
\]
By the standard elliptic estimates for the constant coefficients equations with zero conormal boundary data, we have
\[
\norm{DD_kv}_{L^\infty(B^+(\bar x, \frac12 r))} \le Cr^{-1}\left( \fint_{B^+(\bar x, r)} \abs{D_{k}v-c}^p \right)^{\frac1p},\quad  \forall k \in \set{1, \ldots, n-1},\;\;\forall c \in \bR.
\]
Then by using $\displaystyle D_{nn}v=-\frac{1}{\bar{a}^{nn}}\sum_{(i,j)\neq (n,n)} \bar{a}^{ij}D_{ij}v$,
we obtain
\[
\norm{D^2v}_{L^{\infty}(B^+(\bar x, \frac12 r))} \le C \norm{D D_{x'} v}_{L^{\infty}(B^+(\bar x, \frac12 r))} \le Cr^{-1}\left( \fint_{B^+(\bar x, r)} \abs{D_{x'}v-\vec c}^p \right)^{\frac1p},\quad \forall \vec c \in \bR^{n-1},
\]
where we used the notation $D_{x'} v= (D_1 v, \ldots, D_{n-1} v)$.
Therefore, we have
\[
\norm{D^2v}_{L^{\infty}(B^+(\bar x, \frac12 r))} \le Cr^{-1}\left( \fint_{B^+(\bar x, r)} \abs{Dv-\vec q}^p \right)^{\frac1p},\quad \forall \vec q \in \bR^{n}.
\]
Let $0 < \kappa <\frac12$ be a number to be fixed later.
Since
\[
\left( \fint_{B^+(\bar x, \kappa r)} \Abs{Dv-\overline{Dv}_{B^+(\bar x, \kappa r)}}^p \right)^{\frac1p} \le 2\kappa r \norm{D^2 v}_{L^{\infty}(B^+(\bar x, \kappa r))}
\]
and $\kappa < \frac12$, we see that there is a constant $C_0=C_0(n, \lambda, \Lambda, p)>0$ such that
\[
\left( \fint_{B^+(\bar x, \kappa r)} \Abs{Dv-\overline{Dv}_{B^+(\bar x, \kappa r)}}^p \right)^{\frac1p} \le C_0\kappa \left( \fint_{B^+(\bar x, r)} \abs{Dv-\vec q}^p \right)^{\frac1p},\quad \forall \vec q \in \bR^n.
\]
By using the decomposition $u = v+w,$ we obtain from the above that
\begin{align*}
&\left( \fint_{B^+(\bar x, \kappa r)}\Abs{Du-\overline{Dv}_{B^+(\bar x, \kappa r)}}^p \right)^{\frac1p}\\
&\qquad \qquad  \le 2^{\frac{1-p}{p}}\left( \fint_{B^+(\bar x, \kappa r)} \Abs{Dv-\overline{Dv}_{B^+(\bar x, \kappa r)}}^p \right)^{\frac1p}+2^{\frac{1-p}{p}} \left( \fint_{B^+(\bar x, \kappa r)} \abs{Dw}^p \right)^{\frac1p}\\
& \qquad \qquad \le 4^{\frac{1-p}{p}}C_0 \kappa \left( \fint_{B^{+}(\bar x, r)} \abs{Du-\vec q}^p \right)^{\frac1p}+C(\kappa^{-\frac{n}p}+1)\left( \fint_{B^{+}(\bar x, r)} \abs{Dw}^p \right)^{\frac1p}. 	\end{align*}
Since $\vec q \in \bR^n$ is arbitrary, by using \eqref{eq1323th}, we obtain
\begin{equation}				\label{eq0750f}
\phi(\bar{x},\kappa r) \le 4^{\frac{1-p}{p}}C_0 \kappa\,\phi(\bar x, r) + C(\kappa^{-\frac{n}p}+1)\left( \omega_{\mathbf{A}}(2r)\norm{Du}_{L^{\infty}(B^+(\bar x, 2 r))}+\omega_{\vec g}(2r) \right).
\end{equation}
Therefore, we see that $\phi(\bar x, r)$ enjoys the same estimates for the auxiliary quantity $\varphi(\bar x, r)$ defined in \cite[(2.10)]{DEK17} for Dirichlet boundary problem, and thus \cite[Lemma 2.8]{DEK17} is valid with $\phi(\bar x, \rho)$ in place of $\varphi(\bar x, \rho)$.
Also, we note that if $B(x, \rho) \subset B^{+}(\bar x, R)$ and $\rho \simeq R$, then we have (see (2.26) and (2.28) in \cite{DEK17})
\begin{equation}					\label{eq1209f}
\phi(x, \rho) \lesssim \phi(\bar x, R).
\end{equation}
Consequently, we have Lemmas 2.9, 2.11, and 2.12 in \cite{DEK17} also available in our setting.
For any given $\beta \in (0,1)$, by taking $\kappa \in (0,\frac12)$ sufficiently small such that $4^{\frac{1-p}{p}}C_0 \kappa\le \kappa^\beta$, we thus have the following estimate
\begin{multline}						\label{eq1707m}
\abs{Du(x)-Du(y)} \le  C\norm{Du}_{L^1(B_2^+)}\,\abs{x-y}^\beta \\
+C\left( \norm{Du}_{L^1(B_4^+)} + \int_0^1 \frac{\hat\omega_{\vec g}(t)}t\,dt \right)  \omega^*_{\mathbf{A}}(\abs{x-y})+ C\omega^*_{\vec g}(\abs{x-y}),\quad \forall x, y \in B^{+}_1.
\end{multline}
Here, $\hat\omega_{\vec g}$ is a function determined by $\omega_{\vec g}$
satisfying the Dini condition and $\omega_{\mathbf A}^*(t)$, and $\omega_{\vec g}^*(t)$ are as defined by the formula \cite[(2.34)]{DEK17}.
More precisely, in \cite{DEK17} we defined
\begin{align*}
\tilde \omega_{\bullet}(t)&:= \sum_{i=1}^{\infty} \kappa^{i \beta} \left(\omega_{\bullet}(\kappa^{-i}t)\, [\kappa^{-i}t \le 1] + \omega_{\bullet}(1)\, [\kappa^{-i}t >1] \right),\\
\omega^{\sharp}_{\bullet}(t) &:= \sup_{s \in [t,1]} \,\left(\frac{t}{s}\right)^\beta\, \tilde\omega_{\bullet}(s)\quad \text{for }\;0<t\le 1,\\
\hat\omega_{\bullet}(t)&:= \tilde \omega_{\bullet}(t)+\tilde\omega_{\bullet}(4t)+\omega_{\bullet}^{\sharp}(4t)\;
(~\lesssim \omega_{\bullet}^{\sharp}(4t)\;) \quad \text{for }\;0<t\le 1/4,\\
\omega^*_\bullet(t)&:= \hat\omega_\bullet(t)+ \int_0^t \frac{\tilde \omega_\bullet(s)}s \,ds +\tilde \omega_\bullet(4t)+ \int_0^t \frac{\tilde \omega_\bullet(4s)}s \,ds\quad \text{for} \quad 0<t\le 1/4.
\end{align*}
In particular, they satisfy
\[
\lim_{t \to 0+} \omega_{\bullet}^*(t) = 0.
\]
This completes the proof of Proposition~\ref{prop3.2} and that of Theorem~\ref{thm-main-conormal}.
\qed

\begin{remark}
In the case when $\mathbf A$ and $\vec g$ are H\"older continuous with an exponent $\alpha \in (0,1)$, then by choosing $\beta \in (\alpha,1)$ in \eqref{eq1707m}, one can show that  $\omega_{\mathbf A}^*(t) \lesssim t^\alpha$ and $\omega_{\vec g}^*(t) \lesssim t^\alpha$.
Therefore, $Du$ is H\"older continuous with the same exponent $\alpha$, which recovers the classical result.
\end{remark}

\section{Proof of Theorem~\ref{thm-main-oblique}}			\label{sec3}

We shall use the term ``the prescribed data'' collectively for the following:
the dimension $n$, ellipticity constants $\lambda$, $\Lambda$, the obliqueness constant $\mu_0$, and the domain $\Omega$; the mean oscillation functions for the coefficients $\omega_{\mathbf A}$, $\omega_{\vec b}$, and $\omega_c$, all of which satisfy the Dini condition;
$C^{1,\rm{Dini}^2}$ characteristics of the coefficients $\beta^0$, $\vec \beta$ in the oblique derivative operator and those of $\gamma$ (or equivalently $\psi_0$ in Remark~\ref{rem2.6}), which (locally) represents the boundary.

The following proposition provides key estimates, the proof of which is deferred to the end of this section.
\begin{proposition}			\label{prop4.1}
If $u \in W^{2,2}(B_4^+)$ is a strong solution of
\[
a^{ij} D_{ij}u= f \;\mbox{ in } \; B_4^{+}, \quad D_n u=0 \;\mbox{ on }\; T(0,4),
\]
then we have
\begin{equation}					\label{eq0830f}
[u]_{2; B^{+}_2} \le C \norm{D^2u}_{L^1(B^{+}_4)} + C \int_0^1 \frac{\hat \omega_f(t)}{t} dt,
\end{equation}
where as in the previous section $\hat \omega_f(t)$ is a nonnegative function derived from $\omega_f(t)$ satisfying the Dini condition.
Moreover, for any $x, y \in B_1^{+}$, we have
\begin{equation}					\label{eq1131m}
\abs{D^2u(x)-D^2u(y)} \le  C \left\{\norm{D^2u}_{L^1(B_2^+)}\,\abs{x-y}^\mu
+[u]_{2; B^{+}_2} \, \omega^*_{\mathbf{A}}(\abs{x-y})+ \omega^*_{f}(\abs{x-y}) \right\},
\end{equation}
where $\mu \in (0,1)$ is any number, $C=C(n, \lambda, \Lambda, \omega_{\mathbf A}, \mu)$, and $\omega_{\bullet}^{*}(t)$ is a modulus of continuity determined by  $\omega_{\bullet}(t)$ and $\mu$, which goes to zero as $t\to 0$.
\end{proposition}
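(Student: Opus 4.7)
The plan is to follow the template of Section~\ref{sec2} (and of \cite{DKS16, DEK17}), adapted to second derivatives and to the flat Neumann boundary on $T(0,4)$. Fix $p \in (0,1)$ and introduce the auxiliary quantity
\[
\phi(x, r) := \inf_{M} \left( \fint_{B(x,r) \cap B_4^+} \bigl|D^2 u - M\bigr|^p \right)^{1/p},
\]
with the infimum over symmetric $n\times n$ matrices $M$. Once I establish a per-scale decay of the form
\[
\phi(\bar x, \kappa r) \le C_0\kappa\,\phi(\bar x, r) + C(\kappa)\Bigl( \omega_{\mathbf A}(2r)\, [u]_{2;B^+(\bar x, 2r)} + \omega_f(2r) \Bigr)
\]
for all boundary points $\bar x\in T(0,3)$ and radii $0<r\le 1/2$, with the corresponding interior decay at points well inside $B_4^+$ coming from the nondivergence argument of \cite{DKS16}, both \eqref{eq0830f} and \eqref{eq1131m} follow by the same dyadic iteration and two-point comparison argument used in Section~\ref{sec2} to produce \eqref{eq1707m}, based on \cite[Lemmas~2.9, 2.11, 2.12]{DEK17}. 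The transfer $\phi(x,\rho) \lesssim \phi(\bar x, R)$ for $B(x,\rho) \subset B^+(\bar x, R)$, $\rho \simeq R$ (analogue of \eqref{eq1209f}) handles points close to the boundary but not on it.

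To prove the boundary decay I split $u = v + w$ on $B^+(\bar x, 2r)$, where $w$ is the strong solution of the mixed problem
\[
\bar a^{ij} D_{ij} w = (f - \bar f) - (a^{ij} - \bar a^{ij}) D_{ij} u \;\text{in}\; B^+(\bar x, 2r),\quad w=0\;\text{on}\; \partial B(\bar x, 2r)\cap\{x^n>0\},\quad D_n w=0\;\text{on}\; T(\bar x, 2r),
\]
with $\bar{\mathbf A}$ and $\bar f$ the averages over $B^+(\bar x, 2r)$. Scaling and translating Lemma~\ref{lem2.6} yields a weak-type $(1,1)$ bound for $D^2 w$; conversion to $L^p$ via the usual layer-cake argument gives
\[
\left( \fint_{B^+(\bar x, r)} |D^2 w|^p \right)^{1/p} \le C\,\omega_{\mathbf A}(2r)\, [u]_{2;B^+(\bar x, 2r)} + C\,\omega_f(2r).
\]
The leftover $v = u-w$ then solves $\bar a^{ij} D_{ij} v = \bar f$ in $B^+(\bar x, 2r)$ with $D_n v = 0$ on $T(\bar x, 2r)$. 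Since $\bar f$ is constant, for any $k,l \in \{1,\ldots,n-1\}$ the tangential second derivative $D_{kl} v - c_{kl}$ still solves the homogeneous equation with $D_n(D_{kl}v)=0$ on $T$; applying Lemma~\ref{lem2.7} twice and recovering the non-tangential $D^3$ components via the identity $D_{in}v = D_n(D_i v)$ and the equation $\bar a^{nn} D_{nn}v = \bar f - \sum_{(i,j)\neq(n,n)} \bar a^{ij} D_{ij}v$ gives
\[
\|D^3 v\|_{L^\infty(B^+(\bar x, r/4))} \le \frac{C}{r}\inf_M \left( \fint_{B^+(\bar x, r)} |D^2 v - M|^p \right)^{1/p}.
\]
Combining this with $\osc_{B^+(\bar x, \kappa r)}D^2 v \le 2\kappa r\,\|D^3 v\|_{L^\infty}$ and the triangle inequality $\phi \le \phi_v + \phi_w$ produces the claimed per-scale decay.

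The main obstacle is exactly the propagation of regularity of $v$ all the way up to $D^3$: the Neumann condition is preserved under tangential differentiation but not under $D_n$, so a naive iteration of Lemma~\ref{lem2.7} controls only the purely tangential components of $D^3 v$. The essential trick is that every remaining component (the mixed $D_{ijn}v$, $D_{inn}v$, and $D_{nnn}v$) can be reorganized, via commutativity of partial derivatives and the constant-coefficient equation, as a derivative of some tangential $D^2 v$-component that does inherit the Neumann condition; these reorganizations cascade down to estimates that are all controlled by $\inf_M\fint|D^2 v - M|^p$. Once the per-scale decay is in hand, the bookkeeping identical to \eqref{eq0750f}--\eqref{eq1707m} in Section~\ref{sec2} converts it into \eqref{eq0830f} by dyadic summation (producing the $\int_0^1 \hat\omega_f(t)/t\,dt$ term) and into \eqref{eq1131m} via the two-point argument, with $\omega_\bullet^*$ built from $\omega_{\mathbf A}$ and $\omega_f$ by the same recipe.
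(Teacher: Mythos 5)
Your proposal is correct and follows essentially the same route as the paper: the same Campanato-type quantity $\phi$ over symmetric matrices, the same decomposition $u=v+w$ with $w$ solving the mixed Dirichlet--Neumann problem and estimated via Lemma~\ref{lem2.6}, the same use of Lemma~\ref{lem2.7} on tangential second derivatives $D_{kl}v-c$ followed by the cascade through the constant-coefficient equation to recover the mixed and normal components of $D^3v$, and the same dyadic iteration from \cite{DEK17} to conclude. The only cosmetic deviations (domain $B^+(\bar x,2r)$ versus $B^+(\bar x,r)$ for the mixed problem, and the phrase ``applying Lemma~\ref{lem2.7} twice'' where a single application to $D_{kl}v-c$ suffices) do not affect the argument.
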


We take the proposition for now.
Our first goal is to establish the following estimate under a qualitative assumption that $u \in C^2(\overline \Omega)$:
\begin{equation}					\label{eq0858m}
[u]_{2; \Omega} \le C \left( \norm{u}_{W^{2,1}(\Omega)} +  \int_0^{r_0} \frac{\tilde \omega_f(t)}{t}dt + \abs{g}_{1;\Omega}+ \int_0^{r_0} \frac{\tilde \varrho_{Dg}(t)\ln \frac{1}{t}}{t}dt\right),
\end{equation}
where $r_0>0$ and $C$ are constants depending on the prescribed data;
$\tilde \omega_f(t)$ satisfies the Dini condition and is determined by $\omega_f(t)$ and the prescribed data;
$\tilde\varrho_{Dg}(t)$ satisfies the double Dini condition and is determined by $\varrho_{Dg}(t)$ and the prescribed data.

With the estimate \eqref{eq0858m} at hand, we then show that for any $x, y \in \Omega$, we have
\begin{align}
							\nonumber
\abs{D^2u(x)-D^2u(y)} &\le  C \norm{D^2u}_{L^1(\Omega)}\,\abs{x-y}^\mu+ C[u]_{2; \Omega} \,\omega^*_{\mathbf{A}}(\abs{x-y})+C\omega^*_{f}(\abs{x-y})\\
							\nonumber		
&\quad+ C \left(\abs{g}_{1;\Omega}+ \int_0^{r_0} \frac{\tilde \varrho_{Dg}(t)\ln \frac{1}{t}}{t}dt\right)  \omega_0^*(\abs{x-y})\\
						\label{eq0906m}
&\quad +C\int_0^{\abs{x-y}} \frac{\tilde \varrho_{D g}(t)\ln \frac{1}{t}}{t}dt +
C \omega_1^{*}(\abs{x-y}).
\end{align}
Here, $\mu \in (0,1)$ is an arbitrary constant, $C$ is a constant depending on $\mu$ and the prescribed data;\, $\omega_{\mathbf A}^{*}(t)$ and $\omega_{f}^{*}(t)$ are nonnegative functions determined by $\omega_{\mathbf A}(t)$ and $\omega_{f}(t)$, respectively, as well as  $\mu$ and the prescribed data;\,  $\omega_0^{*}(t)$ and $\omega_1^{*}(t)$ are nonnegative functions determined by $\mu$ and the prescribed data.
Moreover, all the function $\omega_{\bullet}^{*}(t)$ in \eqref{eq0906m} satisfy
\[
\lim_{t \to 0+} \omega_{\bullet}^{*}(t)=0.
\]

Once the estimates \eqref{eq0858m} and \eqref{eq0906m} are available, we can drop the assumption that $u \in C^2(\overline \Omega)$ by the usual bootstrap and approximation argument.
We break the proof of the estimates into several steps.

Unlike Dirichlet or conormal derivative boundary condition cases, we could not find a global $W^{2,p}$ estimate suitable to us in the existing literature.
For this reason, we provide a proof which dispenses with a global $L^p$ estimates, which also works for other boundary conditions.

\subsection*{Step 1}
We first establish interior estimates.
Let us rewrite the equation as
\[
a^{ij}D_{ij} u = f_1:=f - b^i D_i u - cu.
\]
For $B(x_0, 4r_0) \subset \Omega$, the proof of \cite[Theorem~1.10]{DK17} with scaling (c.f. \cite[(2,17)]{DK17}) yields the estimate
\begin{equation}					\label{eq1054m}
\norm{D^2 u}_{L^\infty(B(x_0, 2r_0))} \le C r_0^{-n} \norm{D^2 u}_{L^1(B(x_0,3r_0))} + C \int_0^{r_0} \frac{\tilde \omega_{f_1}(t)}{t}\,dt.
\end{equation}
Here, we adopted an abuse of notation
\[
\tilde \omega_{f_1}(t) = \sum_{i=1}^\infty \kappa^{i \mu} \left( \omega_{f_1}(\kappa^{-i}t)[\kappa^{-i}t \le r_0] + \omega_{f_1}(r_0)[\kappa^{-i}t > r_0] \right),
\]
where $\mu \in (0,1)$ is an arbitrary number, $\kappa = \kappa(n, \lambda, \Lambda, \mu) \in (0,\frac12)$ is a constant, and
\[
\omega_{f_1}(t) := \sup_{x\in B(x_0, 3r_0)} \omega_{f_1; x}(t),\quad\text{where}\quad \omega_{f; x}(t):= \fint_{B(x,t)} \Abs{f-\bar f_{B(x,t)}}.
\]
It should be noted that $\tilde \omega_{f_1}$ satisfies the Dini condition provided $\omega_{f_1}$ satisfies the Dini condition  (see \cite[Lemma~1]{Dong12}).
In particular, if $\omega_{f_1}(t) \lesssim t^a$ with $0<a < \mu$, then $\tilde \omega_{f_1}(t) \lesssim t^a$ as well.

By the proof of Lemma~\ref{lem1721th}, for $B(x,t) \subset \Omega$, we have
\begin{multline}						\label{eq1607th}
\omega_{f_1; x}(t) \le \omega_{f; x}(t) + C(n) \left( [u]_{1; B(x,t)} \,\omega_{\vec b;x}(t) +  t^\mu [u]_{1, \mu; B(x,t)} \,\norm{\vec b}_{L^\infty(B(x,t))} \right) \\
+[u]_{0; B(x,t)} \,\omega_{c;x}(t) +  t^\mu [u]_{0,\mu; B(x,t)} \,\norm{c}_{L^\infty(B(x,t))}.
\end{multline}
Then, by the estimate \eqref{eq1054m}, we obtain
\begin{align}								\nonumber
[u]_{2; B(x_0, 2r_0)} &\le C r_0^{-n} \norm{D^2 u}_{L^1(B(x_0, 3r_0))} + C \int_0^{r_0} \frac{\tilde \omega_{f}(t)}{t}\,dt \\	
										\nonumber
&+C \left( [u]_{1; B(x_0, 4r_0)} \int_0^{r_0} \frac{\tilde \omega_{\vec b}(t)}{t}\,dt+ r_0^\mu [u]_{1, \mu; B(x_0, 4r_0)}\, \norm{\vec b}_{L^\infty(B(x_0, 4r_0))} \right) \\	
										\label{eq1142w}
&+C \left( [u]_{0; B(x_0, 4r_0)} \int_0^{r_0} \frac{\tilde \omega_{c}(t)}{t}\,dt+ r_0^\mu [u]_{0,\mu; B(x_0, 4r_0)}\, \norm{c}_{L^\infty(B(x_0, 4r_0))} \right).
\end{align}
Recall the interpolation inequalities
\begin{equation}					\label{eq1430tu}
[u]_{k;B_r} +[u]_{k, \mu;B_r}\le  [u]_{2; B_r}+C_r \norm{u}_{L^1(B_r)}\quad (k=0,1).
\end{equation}
where $C_r$ is a constant depending on $r$ (and $n$, $k$, and $\mu$).
By setting
\[
\theta(r):= \int_0^{r} \frac{\tilde \omega_{\vec b}(t)}{t}\,dt + \int_0^{r} \frac{\tilde \omega_{c}(t)}{t}\,dt + r^\mu\norm{\vec b}_{L^\infty(\Omega)} + r^\mu \norm{c}_{L^\infty(\Omega)}
\]
and applying \eqref{eq1430tu} to \eqref{eq1142w}, we obtain
\[
[u]_{2; B(x_0, 2r_0)} \le C \theta(r_0) [u]_{2; \Omega} +C \norm{u}_{L^1(\Omega)}
+ C \norm{D^2 u}_{L^1(\Omega)} + C \int_0^{r_0}\frac{\tilde \omega_{f}(t)}{t}\,dt.
\]
Therefore, by choosing $r_0$ small, for $\Omega':=\set{x \in \Omega: \dist(x, \partial \Omega)\ge 4r_0}$, we have
\begin{equation}						\label{eq1451w}
[u]_{2; \Omega'} \le  \frac12\,  [u]_{2; \Omega} +C \norm{u}_{L^1(\Omega)}+ C \norm{D^2 u}_{L^1(\Omega)}+ C \int_0^{r_0} \frac{\tilde \omega_{f}(t)}{t}\,dt,
\end{equation}
where $C$ is a constant depending on the prescribed data and $r_0$.

\subsection*{Step 2}
We turn to estimates near the boundary by closely following the idea of Safonov \cite{Safonov}.
In this step, we shall temporarily assume that $\beta^0 \equiv 0$.
First, we modify \cite[Theorem~2.1]{Safonov} to $C^{2, \rm{Dini}}$ setting (see Lemma \ref{lem:a1} in the Appendix), so that via a local $C^{2,\rm{Dini}}$ diffeomorphism, the boundary condition becomes
\[
\vec \beta(x) \cdot \nabla u(x) = D_n \hat u(y) = \hat g(y),
\]
where $\hat g(y)=g(x)$ is still of $C^{1,\rm{Dini}^2}$.
Moreover, $\hat u$ satisfies the equation
\[
\hat a^{ij} D_{ij} \hat u+  \hat b^i D_i \hat u + \hat c  \hat u=\hat f,
\]
where the coefficients $\hat a^{ij}(y)$, $\hat b^i(y)$, $\hat c(y)$, and the data $\hat f(y)$ are of Dini mean oscillation by Lemma~\ref{lem1721th}.
Preserving the same notation for the transformed objects, we see that the proof is reduced to the case
\[
D_n u = g \quad\text{on}\quad \partial\Omega \cap B(x_0, r_1),\quad r_1 = \text{const.} >0.
\]

A slight modification of \cite[Theorem~2.2]{Safonov} (see Lemma~\ref{lem:a2} in the Appendix), gives us $r>0$ and $v \in C^{2,\rm{Dini}}(\overline \Omega)$ satisfying
\[
D_n v = g \quad\text{on}\quad \partial\Omega \cap B(x_0, r)
\]
with its $C^{2,\rm{Dini}}$ characteristic determined by $g$ and other prescribed data.
Setting $u_0=u-v$, we have
\[
\mathscr{L} u_0=f- \mathscr{L} v=f_0\;\text{ in }\;\Omega,\quad D_n u_0=0\;\text{ on }\partial\Omega \cap B(x_0, r).
\]
By Lemma~\ref{lem:a2}, we also have
\[
\abs{v}_{2; \Omega} \le C \abs{g}_{1; \Omega}+ C \int_0^{r} \frac{\varrho_{Dg}(ct)}{t} + C \int_0^{r} \frac{\varrho_{D\gamma}(c t)}{t}\,dt
\]
and for $x$, $y \in \Omega$, we have
\[
\abs{D^2 v(x) - D^2v(y)} \le C\int_0^{\abs{x-y}} \frac{\varrho_{Dg}(ct)}{t}\,dt + C \int_0^{\abs{x-y}} \frac{\varrho_{D\gamma}(ct)}{t}\,dt.
\]
By Lemma~\ref{lem1721th}, we see that $f_0$ is of Dini mean oscillation in $\Omega$ and
\[
\omega_{f_0}(t) \le \omega_f(t)+ C\int_0^t \frac{\varrho_{Dg}(cs)}{s}\,ds+ C \left( \abs{g}_{1;\Omega} +\int_{0}^{r} \frac{\varrho_{Dg}(ct)}{t}\,dt \right)\omega_0(t) +C \omega_1(t),
\]
where $C$, $c>0$ are constants depending on the prescribed data, and $\omega_0(t)$, $\omega_1(t)$ are nonnegative functions determined by the prescribed data satisfying the Dini condition.
As a matter of fact, we have
\begin{align*}
\omega_{0}(t)& =\omega_{\mathbf A}(t)+ \omega_{\vec b}(t)+ t \norm{\vec b}_\infty+ \omega_c(t)+ t\norm{c}_\infty, \\
\omega_{1}(t) &= \int_{0}^{r} \frac{\varrho_{D\gamma}(ct)}{t}\,dt \cdot(\omega_{\mathbf A}(t) + t\norm{\vec b}_\infty)+  \int_0^t\frac{\varrho_{g}(cs)+\varrho_{D\gamma}(cs)}{s}\,ds,
\end{align*}
where $\gamma$ is a $C^{1, \rm{Dini}^2}$ function that represents $\partial\Omega \cap B(x_0, r)$; see Remark~\ref{rmk1043f} below.
Therefore, in light of \eqref{eq0858m} and \eqref{eq0906m}, by considering $u-v$ instead of $u$, it remains to prove the theorem under the assumption
\begin{equation}				\label{eq1801w}
D_n u = 0 \quad\text{on}\quad \partial\Omega \cap B(x_0, r),\quad r = \text{const.} >0.
\end{equation}

Next, we flatten the boundary by using a ``regularized distance'' function $\psi$ described in Lemma~\ref{lem:a5} in Appendix, which is originally introduced by Lieberman \cite{Lieb85}.
We note that $D\psi \neq 0$ near $\partial \Omega \cap B(x_0, r)$.
Therefore, $C^{1,\rm{Dini}}$ diffeomorphism
\[
x \in \Omega_{2s}(x_0) \longleftrightarrow z=z(x) \in \tilde \Omega_{2s}:=  z(\Omega(x_0, 2s)),
\]
where
\begin{equation}				\label{eq1746sat}
z^i=x^i-x_0^i\quad (i=1,\ldots, n-1),\quad z^n=\psi(x),
\end{equation}
is well defined for some $s \in (0, r/2]$.
For $x \in \Omega(x_0, 2s)$, $z=z(x)$, let us define $\tilde u(z)=u(x)$.
Then, we have
\[
D_i u(x)= D_k \tilde u(z) \,D_i z^k(x),
\]
\begin{equation}					\label{eq1127th}
D_{ij} u(x)= D_{kl} \tilde u(z)\, D_i z^k(x) \, D_j z^l(x) + h^{ij}(x),
\end{equation}
where
\begin{equation}					\label{eq1204th}
h^{ij}(x)= D_n \tilde u(z)\, D_{ij} \psi(x).
\end{equation}
Therefore, the equation is turned into
\[
\tilde{a}^{ij} D_{ij} \tilde u+  \tilde b^i D_i \tilde u + \tilde c \tilde u=\tilde f \quad\text{in}\quad \tilde \Omega_{2s}= z(\Omega(x_0, 2s)) \subset \bR^n_{+}.
\]
where
\begin{gather*}
\tilde a^{ij}=\tilde a^{ij}(z)=a^{kl}(x) D_k z^i(x) D_l z^j(x),\quad \tilde b^i =\tilde b^i(z)=b^k(x)D_k z^i(x),\\
\tilde c=\tilde c(z)=c(x),\quad \tilde f=\tilde f(z)=f(x)-a^{kl}(x) h^{kl}(x),
\end{gather*}
and the boundary condition \eqref{eq1801w} yields
\begin{equation}					\label{eq1120th}
D_n \tilde u=0 \quad \text{on}\quad z(\partial \Omega \cap B(x_0,2s)) \subset \partial \bR^n_{+}=\set{z^n =0}.
\end{equation}

Now, let us choose $s_0 \simeq s$ such that $B^{+}(0, 4s_0) \subset \tilde \Omega_{2s}$.
Since $z=z(x)$ is of $C^{1,\rm{Dini}}$, we see from Lemma~\ref{lem1721th} that $\tilde a^{ij}$, $\tilde b^i$ and $\tilde c$ are of Dini mean oscillation in $B^{+}(0, 4s_0)$.
Moreover, the next lemma shows that $\tilde{h}^{ij}(z)=h^{ij}(x)$ are Dini continuous in $B^{+}(0, 4s_0)$.
\begin{lemma}				\label{lem4.17sat}
Denote $B^+_{4s_0}=B^+(0, 4s_0)$ and let $\tilde h^{ij}(z)=h^{ij}(x)$.
We have
\begin{align*}
\abs{\tilde h^{ij}(z)} &\le C \norm{D^2 \tilde{u}}_{L^{\infty}(B^+_{4s_0})}\,\vartheta(z^n),\\
\abs{\tilde h^{ij}(z_1)-\tilde h^{ij}(z_2)}&\le C \norm{D^2\tilde{u}}_{L^{\infty}(B^+_{4s_0})}\,\vartheta(\abs{z_1-z_2}),
\end{align*}
where $C$ is a constant and $\vartheta(t)=\varrho_{D \psi_0}(t)$ is a nonnegative function satisfying the Dini condition; see Lemma~\ref{lem:a5}.
\end{lemma}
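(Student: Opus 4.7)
Writing $\tilde h^{ij}(z)=D_n\tilde u(z)\cdot \tilde d^{ij}(z)$ with $\tilde d^{ij}(z):=D_{ij}\psi(x(z))$, my plan rests on two ingredients. First, the boundary condition \eqref{eq1120th} asserts $D_n\tilde u\equiv 0$ on $\{z^n=0\}$, so integrating $D_{nn}\tilde u$ along the vertical segment from $(z^1,\dots,z^{n-1},0)$ to $z$ yields
\[
|D_n\tilde u(z)|\le \|D^2\tilde u\|_{L^\infty(B^+_{4s_0})}\, z^n.
\]
Second, the regularized-distance estimates recorded in Lemma~\ref{lem:a5} (Lieberman's construction \cite{Lieb85}) provide, with $\psi(x)=z^n$ and $\vartheta=\varrho_{D\psi_0}$,
\[
|D_{ij}\psi(x)|\le C\,\vartheta(z^n)/z^n,\qquad |D_k D_{ij}\psi(x)|\le C\,\vartheta(z^n)/(z^n)^2.
\]
Multiplying the $D_n\tilde u$ and $D_{ij}\psi$ bounds immediately gives the pointwise estimate $|\tilde h^{ij}(z)|\le C\|D^2\tilde u\|_{L^\infty(B^+_{4s_0})}\,\vartheta(z^n)$, which is the first inequality of the lemma.

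For the continuity estimate, fix $z_1,z_2\in B^+_{4s_0}$, assume WLOG $z_2^n\ge z_1^n$, and set $\delta:=|z_1-z_2|$ and $M:=z_2^n$. \textbf{Case A} ($\delta\ge M/2$): the triangle inequality with the pointwise bound, monotonicity of $\vartheta$, and the doubling property \eqref{eq1925sat} give
\[
|\tilde h^{ij}(z_1)-\tilde h^{ij}(z_2)|\le C\|D^2\tilde u\|_\infty\bigl(\vartheta(z_1^n)+\vartheta(z_2^n)\bigr)\le C'\|D^2\tilde u\|_\infty\,\vartheta(\delta).
\]
\textbf{Case B} ($\delta<M/2$): then $z_1^n\ge M/2$ and the segment $[z_1,z_2]$ lies in $\{z^n\ge M/2\}$. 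Using the product split
\[
\tilde h^{ij}(z_1)-\tilde h^{ij}(z_2)=\bigl[D_n\tilde u(z_1)-D_n\tilde u(z_2)\bigr]\tilde d^{ij}(z_1)+D_n\tilde u(z_2)\bigl[\tilde d^{ij}(z_1)-\tilde d^{ij}(z_2)\bigr],
\]
the first summand is bounded by $\|D^2\tilde u\|_\infty \delta$ times the $D^2\psi$ estimate, and the second by the $D_n\tilde u$ bound times the mean value theorem applied along $[z_1,z_2]$ with the $D^3\psi$ estimate; a short calculation shows both are at most $C\|D^2\tilde u\|_\infty\,\delta\,\vartheta(M)/M$.

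To finish Case B, I invoke Lemma~\ref{lem1822sat} with $\beta=1$ to replace $\vartheta$ (pointwise) by a larger Dini function $\tilde\vartheta$ with $\tilde\vartheta(t)/t$ decreasing; since $M>\delta$, this yields
\[
\delta\,\vartheta(M)/M\le \delta\,\tilde\vartheta(M)/M\le \tilde\vartheta(\delta),
\]
and after renaming $\tilde\vartheta$ as $\vartheta$ (absorbing the constants into $C$) the second inequality follows. The main obstacle is Case B: it requires the third-derivative estimate for the regularized distance (beyond the usual $D^2\psi$ control) and the monotonicity reduction via Lemma~\ref{lem1822sat} to pass from $\vartheta(M)/M$ to $\vartheta(\delta)/\delta$; once these are in hand, the rest is a product-rule computation combined with the elementary case analysis above.
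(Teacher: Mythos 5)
Your proposal is correct and follows essentially the same route as the paper: the vertical integration of $D_{nn}\tilde u$ from $T(0,4s_0)$ to get $\abs{D_n\tilde u(z)}\le z^n\norm{D^2\tilde u}_\infty$, the $D^2\psi$ and $D^3\psi$ bounds from Lemma~\ref{lem:a5}, the same two-case split on whether $\abs{z_1-z_2}$ exceeds $\tfrac12 z_2^n$, and the same reduction via Lemma~\ref{lem1822sat} to a $\vartheta$ with $\vartheta(t)/t$ decreasing so that $\delta\,\vartheta(M)/M\le\vartheta(\delta)$. The only cosmetic difference is that you split the product $D_n\tilde u\cdot D_{ij}\psi$ and apply the mean value theorem to each factor, whereas the paper differentiates $\tilde h^{ij}$ once, bounds $\abs{D\tilde h^{ij}(z)}\le C\norm{D^2\tilde u}_\infty\vartheta(z^n)/z^n$, and applies the mean value theorem to $\tilde h^{ij}$ itself; these are the same computation.
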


\begin{proof}
By Lemma~\ref{lem1822sat}, we may assume that $\vartheta(t)/t$ is decreasing for $t \in (0, 4s_0)$.
For any $z \in B^+$, by \eqref{eq1120th} and the mean value theorem, we get
\[
\abs{D_n \tilde u(z)}= \abs{D_n \tilde u(z)-D_n \tilde u(\bar z)} \le z^n\norm{D^2 \tilde u}_{L^{\infty}(B^+_{4s_0})},\quad \text{where }\;\bar z = (z^1,\ldots, z^{n-1},0).
\]
Then, by \eqref{eq1204th}, \eqref{eq1746sat}, and Lemma~\ref{lem:a5}, we have
\[
\abs{\tilde h^{ij}(z)} =\abs{h^{ij}(x)} \le C\norm{D^2 \tilde u}_{L^{\infty}(B^+_{4s_0})}\,\vartheta(z^n),\quad \vartheta(t)=\varrho_{D\psi_0}(t).
\]
Also, since
\[
D_k \tilde{h}^{ij}(z)=D_{nm}\tilde u(z) D_k z^m(x) D_{ij}\psi(x) + D_n \tilde u(z) D_{ijk}\psi(x),
\]
 we also get
\begin{equation}				\label{eq2020sat}
\abs{D \tilde{h}^{ij}(z)} \le C \norm{D^2 \tilde u}_{L^{\infty}(B^+_{4s_0})}\,\vartheta(z^n)/z^n.
\end{equation}
Consider any two points $z_1$, $z_2\in B^+_{4s_0}$ with $z_2^n \ge z_1^n$.
In the case when $\abs{z_2-z_1} > \frac{1}{2} z_2^n$, we have
\begin{align*}
\abs{\tilde h^{ij}(z_2)-\tilde h^{ij}(z_1)}&\le \abs{\tilde h^{ij}(z_2)}+ \abs{\tilde h^{ij}(z_1)} \le C\norm{D^2 \tilde u}_{L^{\infty}(B^+_{4s_0})}\, \vartheta(z_2^n)+C\norm{D^2 \tilde u}_{L^{\infty}(B^+_{4s_0})}\, \vartheta(z_1^n) \\
&\le C\norm{D^2 \tilde u}_{L^{\infty}(B^+_{4s_0})}\vartheta(\abs{z_2-z_1}),
\end{align*}
where we used $\vartheta(at)\gtrsim \vartheta(t)$ for $a\ge 1/2$.
On the other hand, in the case when $\abs{z_2-z_1} \le \frac{1}{2}z_2^n$, we have
\[
z_2^n \le \abs{z_1-z_2} + z_1^n \le \tfrac{1}{2} z_2^n + z_1^n,
\]
and thus, we have $\abs{z_1-z_2} \le \frac{1}{2} z_2^n \le z_1^n$.
By the mean value theorem, there is $z_3$ in the line segment $[z_1,z_2]$ satisfying
\[
\abs{\tilde h^{ij}(z_2)-\tilde h^{ij}(z_1)} \le \abs{D\tilde h^{ij}(z_3)}\, \abs{z_2-z_1}.
\]
Note that we have $\abs{z_1-z_2} \le z_1^n \le z_3^n$.
Hence, by using \eqref{eq2020sat}, we obtain
\begin{align*}
\abs{\tilde h^{ij}(z_2)- \tilde h^{ij}(z_1)} &\le \abs{D\tilde h^{ij}(z_3)} \,\abs{z_2-z_1} \le C\norm{D^2 \tilde u}_{L^{\infty}(B^+_{4s_0})}\frac{\vartheta(z_3^n)}{z_3^n}\, \abs{z_2-z_1} \\
& 
\le C\norm{D^2 \tilde u}_{L^{\infty}(B^+_{4s_0})} \vartheta(\abs{z_1-z_2}),
\end{align*}
where we used that $\vartheta(t)/t$ is decreasing.
This completes the proof.
\end{proof}
By Lemmas~\ref{lem4.17sat} and \ref{lem1721th}, we find that $\tilde f=\tilde f(z)=f(x)-a^{kl}(x) h^{kl}(x)$ is of Dini mean oscillation in $B^+=B^+(0,4s_0)$ and there is a constant $a>0$ such that
\begin{equation}					\label{eq1547th}
\omega_{\tilde f}(t) \le C\left(\omega_f(a t)+ \omega_{\mathbf A}(at) [\tilde u]_{2; B^+_{4s_0}} + \vartheta(at) [\tilde u]_{2; B^+_{4s_0}}\right),\quad 0<\forall t < 4s_0.
\end{equation}
Now we set
\[
\tilde f_1:= \tilde f- \tilde b^i D_i\tilde u - \tilde c \tilde u.
\]
Note that by Lemma~\ref{lem1721th}, we have (c.f. \eqref{eq1607th} and \eqref{eq1547th} above)
\begin{multline*}
\omega_{\tilde f_1}(t)  \le \omega_{\tilde f}(t)+ C \left( [\tilde u]_{1; B^+_{4s_0}} \,\omega_{\tilde{\vec b}}(t) +  t^\mu [\tilde u]_{1, \mu; B^+_{4s_0}} \,\norm{\tilde{\vec b}}_{L^\infty(B^+_{4s_0})} \right)\\
+C \left([\tilde u]_{0; B^+_{4s_0}} \,\omega_{\tilde c}(t) +  t^\mu [\tilde u]_{0,\mu; B^+_{4s_0}} \,\norm{\tilde c}_{L^\infty(B^+_{4s_0})} \right), \quad 0<\forall t<4s_0.
\end{multline*}
Also, by the interpolation inequalities (c.f. \eqref{eq1430tu} above) we have
\[
[\tilde u]_{0; B^+_{4s_0}} + [\tilde u]_{0, \mu; B^+_{4s_0}}+[\tilde u]_{1; B^+_{4s_0}} + [\tilde u]_{1, \mu; B^+_{4s_0}} \le   [\tilde u]_{2; B^+_{4s_0}}+C \norm{\tilde u}_{L^1(B^+_{4s_0})}.
\]
Then by \eqref{eq1547th}, for any $0<t<4s_0$, we have
\begin{equation}				\label{eq1559th}
\omega_{\tilde f_1}(t)  \le C \omega_f(at) + C \bigl(\vartheta_0(t) + \vartheta_1(t) \bigr) [\tilde u]_{2; B^+_{4s_0}} + C \vartheta_1(t) \norm{\tilde u}_{L^1(B^+_{4s_0})}.
\end{equation}
where we set
\begin{align*}
\vartheta_0(t)&:= \omega_{\mathbf A}(at)+ \vartheta(at),\\
\vartheta_1(t)&:= \omega_{\tilde{\vec b}}(t)+ \omega_{\tilde c}(t)+ \norm{\tilde{\vec b}}_{L^\infty(B^+_{4s_0})}\, t^\mu + \norm{\tilde c}_{L^\infty(B^+_{4s_0})}\, t^\mu.
\end{align*}
Note that $\vartheta_0(t)$ and $\vartheta_1(t)$ both satisfy the Dini condition.

Therefore, we are reduced to
\[
\tilde a^{ij} D_{ij} \tilde u = \tilde f_1 \;\text{ in }\; B^{+}(0, 4s_0), \quad D_n \tilde u=0 \;\text{ on }\; T(0,4s_0),
\]
where $\tilde f_1$ is of Dini mean oscillation.
By Proposition~\ref{prop4.1} and \eqref{eq1559th}, we have
\begin{multline}					\label{eq1837w}
[\tilde u]_{2; B^+_{s_0}} \le C \norm{D^2 \tilde u}_{L^1(B^+_{4s_0})} +  C\left( \int_0^{s_0} \frac{\hat \vartheta_0(t)}{t} dt + \int_0^{s_0} \frac{\hat \vartheta_1(t)}{t} dt \right) [\tilde u]_{2; B^+_{4s_0}} \\
+C \int_0^{s_0} \frac{\hat \omega_f(at)}{t} dt+
C \left( \int_0^{s_0} \frac{\hat \vartheta_1(t)}{t} dt \right) \norm{\tilde u}_{L^1(B^+_{4s_0})}.
\end{multline}

Note that the equalities \eqref{eq1127th} and Lemma~\ref{lem4.17sat} imply
\[
[u]_{2; \Omega(x_0, \delta s_0)} \le C [\tilde u]_{2; B^{+}_{s_0}}
\]
for some constant $0<\delta < \frac14$.
We also have
\[
[\tilde u]_{2; B^+_{4s_0}} \le C [u]_{2; \Omega},
\]
because the mapping $x=x(z)$ has the same properties as $z=z(x)$.

By requiring $s_0$ so small that we have
\[
C\left(\int_0^{s_0} \frac{\hat \vartheta_0(t)}{t} dt + \int_0^{s_0} \frac{\hat \vartheta_1(t)}{t} dt \right)\le \frac12.
\]
Therefore, we get from \eqref{eq1837w} that
\begin{equation}					\label{eq1842w}
[u]_{2; \Omega(x_0, \delta s_0)} \le C \norm{u}_{W^{2,1}(\Omega)} +  \frac12 [u]_{2; \Omega} +C \int_0^{s_0} \frac{\hat \omega_f(at)}{t} dt+
C \left( \int_0^{s_0} \frac{\hat \vartheta_1(t)}{t} dt \right) \norm{u}_{L^1(\Omega)}.
\end{equation}
By combining \eqref{eq1451w} and \eqref{eq1842w}, we get \eqref{eq0858m}.
Then, \eqref{eq0906m} is obtained by combining \eqref{eq1131m} and the interior estimate appears in the proof of \cite[Theorem 1.6]{DK17}.

\subsection*{Step 3}
Finally, we drop the temporary assumption that $\beta^0 \equiv 0$.
We rewrite the boundary condition as
\[
\vec \beta \cdot \nabla u= g_1:= g - \beta^0 u\quad\text{on}\quad \partial \Omega.
\]
Recall Definition~\ref{def:dini} and observe that
\[
\varrho_{D(\beta^0 u)}(t) \le \varrho_{D \beta^0}(t) [u]_{0; \Omega}+ [\beta^0]_{1; \Omega} [u]_{0,\mu; \Omega}\, t^\mu + [\beta^0]_{1; \Omega} [u]_{1; \Omega}\,t+ [\beta^0]_{0; \Omega} [u]_{1,\mu; \Omega}\, t^\mu.
\]
Therefore, by the interpolation inequalities
\[
[u]_{0; \Omega} + [u]_{0, \mu; \Omega}+[u]_{1; \Omega} + [u]_{1, \mu; \Omega} \le \varepsilon  [u]_{2; \Omega}+C_{\varepsilon} \norm{u}_{L^1(\Omega)},
\]
we find that $\beta^0 u \in C^{1,\rm{Dini}^2}(\overline \Omega)$ and its $C^{1,\rm{Dini}^2}$ characteristic is determined by that $\beta^0$ and the right-hand side of the above inequality.
By choosing $\varepsilon$ small, we can hide $[u]_{2; \Omega}$ contribution.
This completes the proof of Theorem~\ref{thm-main-oblique}.\qed

\subsection*{Proof of Proposition~\ref{prop4.1}}
Once again, we derive an a priori estimate of the modulus of continuity of $D^2u$ by assuming that $u$ is in $C^2(\overline B{}^+_3)$.
As before, we fix some $p \in (0,1)$ and introduce
\[
\phi(x,r) :=\inf_{\mathbf q \in \mathbb S(n)} \left(\fint_{B(x,r)\cap B_4^+} \abs{D^2u-\mathbf q}^p \right)^{\frac1p},
\]
where $\mathbb S(n)$ is the set of all $n\times n$ symmetric real matrices.

We shall derive an estimate for $\phi(\bar x, r)$ for $\bar x \in T(0,3)$ and $0< r \le 1$.
We split $u=v+w$, where $w \in W^{2,2}(B^{+}(\bar x, r))$ is a strong solution of the mixed  problem
\begin{gather*}
\bar a^{ij}  D_{ij} w = - \tr((\mathbf{A} - \bar{\mathbf A}) D^2 u) +f - \bar f \;\text{ in } \; B^{+}(\bar x, r),\\
u =0 \;\text{ on }\; \partial B(\bar x, r)\cap \bR^n_{+},\quad D_n u=0 \;\text{ on }\; T(\bar x, r),
\end{gather*}
where $\bar{\mathbf A} = \bar{\mathbf A}_{B^{+}(\bar x, r)}$ and $\bar f = \bar f_{B^{+}(\bar x, r)}$.
By Lemma~\ref{lem2.6} with scaling, we see that
\[
\Abs{\set{x \in B^{+}(\bar x, r) : \abs{D^2w(x)} >t} } \le \frac{C}{t} \left( \int_{B^{+}(\bar x, r)} \abs{f-\bar f} + [u]_{2; B^{+}(\bar x, r)} \int_{B^{+}(\bar x, r)} \abs{\mathbf A - \bar{\mathbf A}} \right).
\]
Then similar to \cite[(2.11)]{DK17}, we get
\begin{equation}					\label{eq1725th}
\left(\fint_{B^{+}(\bar x, r)} \abs{D^2 w}^p\right)^{\frac1p} \le C [u]_{2; B^{+}(\bar x, r)} \omega_{\mathbf{A}}(r) +C\omega_f(r).
\end{equation}
Next $v:=u-w$  solves
\[
\bar{a}^{ij} D_{ij}v= \bar f \;\mbox{ in } \; B^{+}(\bar x, r), \quad D_n v=0 \;\mbox{ on }\; T(\bar{x},r).
\]
Hence, for any $k, l \in \set{1, \ldots, n-1}$ and $c \in \bR$, the function $V:= D_{kl}v -c$ satisfies
\[
\bar a^{ij} D_{ij} V= 0 \;\mbox{ in } \; B^{+}(\bar x, r), \quad D_n V=0 \;\mbox{ on }\; T(\bar x,r).
\]
By applying Lemma~\ref{lem2.7} with scaling, we see that
\[
\norm{D D_{kl} v}_{L^\infty(B^{+}(\bar x, \frac12 r))} \le C r^{-1}  \left( \fint_{B^{+}(\bar x, r)}\, \abs{D_{kl}v - c}^p\right)^{\frac{1}{p}}.
\]
Therefore, by setting
\[
D_{x'}^2v:=  \set{D_{ij}v : 1\le i,j \le n-1},
\]
we find that
\begin{equation}					\label{eq1759th}
\norm{D D_{x'}^2 v}_{L^\infty(B^{+}(\bar x, \frac12 r))} \le C r^{-1}  \left( \fint_{B^{+}(\bar x, r)}\, \abs{D^2 v - \mathbf q}^p\right)^{\frac{1}{p}},\quad \forall  \mathbf q \in \mathbb S(n).
\end{equation}
Since
\[
D_{nn} v=-\frac{1}{\bar{a}^{nn}}\sum_{(i, j) \neq (n,n)}\bar{a}^{ij}D_{ij}v+\bar f,
\]
by taking the partial derivative with respect to $x^m$, we obtain
\begin{equation}					\label{eq1819th}
D_{nnm} v=-\frac{1}{\bar{a}^{nn}}\sum_{(i, j) \neq (n,n)}\bar{a}^{ij}D_{ijm}v,
\end{equation}
and thus it follows from \eqref{eq1759th} that for any $m \in \set{1,\ldots, n-1}$, we have
\[
\norm{D^2 D_m v}_{L^{\infty}(B^{+}(\bar x, \frac12 r))} \le Cr^{-1}\left( \fint_{B^{+}(\bar x, r)} \abs{D^2 v-\mathbf q}^p \right)^{\frac{1}{p}},\quad \forall  \mathbf q \in \mathbb S(n).
\]
Then, by taking $m=n$ in \eqref{eq1819th} we get
\[
\norm{D^3v}_{L^{\infty}(B^{+}(\bar x, \frac12 r))}  \le Cr^{-1}\left( \fint_{B^{+}(\bar x, r)} \abs{D^2v-\mathbf q}^p \right)^{\frac1p}, \quad \forall  \mathbf q \in \mathbb S(n).
\]
Let $0 < \kappa \le \frac12$ be a constant to be fixed later.
By the mean value theorem, we have
\[
\left( \fint_{B^{+}(\bar x, \kappa r)} \abs{D^2v-(D^2v)_{\kappa r}}^p \right)^{\frac1p} \le 2\kappa r \norm{D^3v}_{L^{\infty}(B^{+}(\bar x, \frac12 r))},\quad\text{where }\; (D^2 v)_{\kappa r}:=\fint_{B^{+}(\bar x, \kappa r)} D^2 v.
\]
Hence, we see that there is some constant $C_0=C_0(n, \lambda, \Lambda)$ such that
\[
\left( \fint_{B^{+}(\bar x, \kappa r)} \abs{D^2v-(D^2v)_{\kappa r}}^p \right)^{\frac1p} \le C_0\kappa \left( \fint_{B^{+}(\bar x, r)} \abs{D^2v-\mathbf q}^p \right)^{\frac1p}, \quad \forall  \mathbf q \in \mathbb S(n).
\]
By using the decomposition $u = v+w,$ similar to \eqref{eq0750f}, we obtain
\begin{align*}
&\left( \fint_{B^+(\bar x, \kappa r)}\Abs{D^2u-(D^2v)_{\kappa r}}^p \right)^{\frac1p}\\
&\qquad \qquad  \le 2^{\frac{1-p}{p}}\left( \fint_{B^+(\bar x, \kappa r)} \Abs{D^2v-(D^2v)_{\kappa r}}^p \right)^{\frac1p}+2^{\frac{1-p}{p}} \left( \fint_{B^+(\bar x, \kappa r)} \abs{D^2w}^p \right)^{\frac1p}\\
& \qquad \qquad \le 4^{\frac{1-p}{p}}C_0 \kappa \left( \fint_{B^{+}(\bar x, r)} \abs{D^2u-\mathbf q}^p \right)^{\frac1p}+C(\kappa^{-\frac{n}p}+1)\left( \fint_{B^{+}(\bar x, r)} \abs{D^2w}^p \right)^{\frac1p}.
\end{align*}
Since $\mathbf q \in \mathbb S(n)$ is arbitrary, by using \eqref{eq1725th}, we obtain
\[
\phi(\bar{x},\kappa r) \le 4^{\frac{1-p}{p}}C_0 \kappa\,\phi(\bar x, r) + C(\kappa^{-\frac{n}p}+1)\left( \omega_{\mathbf{A}}(r)\norm{Du}_{L^{\infty}(B^{+}(\bar x, r))}+\omega_{f}(r) \right),
\]
which is analogous to \eqref{eq0750f}.
Also, we note that \eqref{eq1209f} is available whenever $B(x, \rho) \subset B^{+}(\bar x, R)$ and $\rho \simeq R$.

Consequently, we have Lemmas 2.16, 2.17, 2.18, and 2.19 in \cite{DEK17}  available in our setting with $\phi(\bar x, r)$.
In particular, by \cite[Lemma~2.18]{DEK17}, we obtain \eqref{eq0830f} .
Also, the estimate \eqref{eq1131m} follows by a similar argument employed in deriving \cite[(2.36)]{DEK17}.
\qed

\begin{remark}					\label{rmk1043f}
Observe that by Lemmas~\ref{lem1721th} and \ref{lem:a2}, we have
\begin{align*}
\omega_{\mathbf{A} D^2 v}(t)  & \lesssim [v]_{2;\Omega}\,  \omega_{\mathbf A}(t)+ \norm{\mathbf A}_\infty \,\varrho_{D^2 v}(t) \\
& \lesssim \left(\abs{g}_{1; \Omega}+\int_{0}^{2b} \frac{\varrho_{Dg}(ct)}{t} + \frac{\varrho_{D\gamma}(c t)}{t}\,dt\right)\omega_{\mathbf A}(t)+  \int_0^t \frac{\varrho_{Dg}(cs)}{s} \,ds+ \int_0^t\frac{\varrho_{D\gamma}(cs)}{s}\,ds,\\
\omega_{\vec b D v}(t) & \lesssim [v]_{1; \Omega} \,\omega_{\vec b}(t) + \norm{\vec b}_\infty \,\varrho_{Dv}(t) \lesssim \abs{g}_{1; \Omega}\, \omega_{\vec b}(t) + t \norm{\vec b}_\infty\, [v]_{2; \Omega} \\
&\lesssim \abs{g}_{1; \Omega}\, \omega_{\vec b}(t)+ t \left(\abs{g}_{1; \Omega}+\int_{0}^{2b} \frac{\varrho_{Dg}(ct)}{t} + \frac{\varrho_{D\gamma}(c t)}{t}\,dt\right) \norm{\vec b}_\infty,\\
\omega_{c v}(t)  & \lesssim [v]_{0; \Omega}\, \omega_c(t) + \norm{c}_\infty \,\varrho_v(t) \lesssim \abs{g}_{1; \Omega}\, \omega_{c}(t) + t \norm{c}_\infty\, [v]_{1; \Omega} \\
&\lesssim \abs{g}_{1; \Omega}\, \omega_{c}(t) + t \abs{g}_{1; \Omega}\, \norm{c}_\infty.
\end{align*}
Therefore, we have
\begin{align*}
\omega_{f_0}(t) &\le \omega_f(t)+ \omega_{\mathscr{L} v}(t)  \lesssim \omega_f(t)+ \omega_{\mathbf{A} D^2 v}(t)+ \omega_{\vec bDv}(t)+\omega_{cv}(t) \\
& \lesssim\omega_f(t)+ \left( \abs{g}_{1; \Omega} +\int_{0}^{2b} \frac{\varrho_{Dg}(ct)}{t}\,dt \right) \cdot \left( \omega_{\mathbf A}(t)+ \omega_{\vec b}(t)+ t \norm{\vec b}_\infty+ \omega_c(t)+ t\norm{c}_\infty  \right) \\
&\quad+ \int_{0}^{2b} \frac{\varrho_{D\gamma}(ct)}{t}\,dt \cdot \left(\omega_{\mathbf A}(t)+ t\norm{\vec b}_\infty \right) + \int_0^t \frac{\varrho_{Dg}(cs)}{s} \,ds+ \int_0^t\frac{\varrho_{D\gamma}(cs)}{s}\,ds.
\end{align*}
\end{remark}

\section{Appendix}
In the Appendix, we provide the proofs for some technical lemmas used before by slightly modifying those in Safonov's paper \cite{Safonov}.

\begin{lemma}			\label{lem:a5}
Let $\Omega$ be a $ C^{1,\rm{Dini}}$ domain with a defining $C^{1,\rm{Dini}}$ function $\psi_0$ (see Remark \ref{rem2.6}).
Then there exists a function $\psi \in C^{1,\rm{Dini}}(\bR^n) \cap  C^\infty(\Omega)$ such that
\[
\delta \psi(x) \le d_x:=\dist(x, \partial \Omega) \le \delta^{-1} \psi(x),\quad \forall x \in \Omega,
\]
\[
[\psi]_{1; \Omega} \le 1,\quad \varrho_{D\psi}(t) \le C\varrho_{D\psi_0}(ct),
\]
where $[\cdot]_{k; \Omega}$ and $\varrho_{\bullet}$ are as defined in  \eqref{eq0853m} and Definition~\ref{def:dini}, respectively, $ \delta=\delta(n,\psi_0)\in (0,1)$, $C=C(n, \psi_0)$, and $c=c(n, \psi_0)>0$.
Moreover, for any multi-index $l$ with $\abs{l}=m\ge 2$, we have
\[
\abs{D^l \psi(x)} \le C \psi(x)^{1-m} \varrho_{D\psi_0} (\psi(x)),\quad \forall x \in \Omega,
\]
where $C=C(n, m, \psi_0)$.
\end{lemma}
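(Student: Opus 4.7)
The plan is to follow the implicit-function/convolution construction of Lieberman, suitably adapted so that the derivative estimates reflect only the Dini regularity of $D\psi_0$ rather than any higher smoothness. Fix a radial mollifier $\eta\in C_c^\infty(B_{1/2})$ with $\int\eta=1$ and an auxiliary constant $\sigma\in(0,1)$ to be chosen small. For $x\in\bR^n$ and $t>0$, set
\[
F(x,t):=\int_{\bR^n}\psi_0(x-tz)\,\eta(z)\,dz - \sigma t.
\]
I would then \emph{define} $\psi(x)$ to be the unique positive solution of $F(x,\psi(x))=0$ for $x\in\Omega$, and extend $\psi\equiv 0$ on $\bR^n\setminus\Omega$.

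First I would verify that the defining equation is uniquely solvable. Computing
\[
\partial_t F(x,t)= -\int_{\bR^n} z\cdot D\psi_0(x-tz)\,\eta(z)\,dz - \sigma,
\]
and using $|D\psi_0|\ge 1$ on $\partial\Omega$ together with the Dini continuity of $D\psi_0$, one shows that near $\partial\Omega$ we have $\partial_t F(x,t)\le -c<0$ for all relevant $t$; meanwhile $F(x,0^+)=\psi_0(x)>0$ for $x\in\Omega$ and $F(x,t)\to-\infty$ as $t\to\infty$. This gives existence and uniqueness of $\psi(x)>0$, and since $\partial_tF<0$ the implicit function theorem produces $\psi\in C^\infty(\Omega)$ (the $x$-smoothness is inherited from $\eta$ through the convolution, regardless of the low regularity of $\psi_0$). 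Comparability $\delta\psi(x)\le d_x\le \delta^{-1}\psi(x)$ follows by comparing $F(x,t)$ with its ``limit'' $\psi_0(x)-\sigma t$: since $|D\psi_0|\ge 1$ near $\partial\Omega$ gives $\psi_0(x)\simeq d_x$, choosing $\sigma$ small forces $\psi(x)\simeq\psi_0(x)\simeq d_x$.

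Next, for the $C^{1,\mathrm{Dini}}$ assertion, I would differentiate the identity $F(x,\psi(x))=0$ to get
\[
D\psi(x)= -\frac{D_xF(x,\psi(x))}{\partial_t F(x,\psi(x))},\qquad D_xF(x,t)=\int_{\bR^n} D\psi_0(x-tz)\,\eta(z)\,dz.
\]
Both numerator and denominator are mollifications of $D\psi_0$ at scale $\psi(x)$, so $[\psi]_{1;\Omega}\le 1$ follows after a suitable normalization, and the modulus of continuity of $D\psi$ is controlled by that of these mollifications, yielding $\varrho_{D\psi}(t)\le C\varrho_{D\psi_0}(ct)$ via the standard argument of splitting the increment into the near regime $|x-y|\lesssim\psi(x)$ (where one uses $\varrho_{D\psi_0}$ directly) and the far regime $|x-y|\gtrsim\psi(x)$ (where the comparison $\psi\simeq d$ is used).

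Finally, for the higher derivative bounds with $|l|=m\ge 2$, I would differentiate the implicit equation $m$ times and argue by induction. Each additional $x$-derivative either differentiates the mollification kernel $\eta((x-y)/t)$, which by a change of variables produces a factor of $t^{-1}=\psi(x)^{-1}$, or differentiates $\psi(x)$ itself (handled by the inductive hypothesis). After $m-1$ such differentiations we are left with a single derivative hitting $D\psi_0$, which cannot be taken classically; instead one writes it as a difference quotient against the integral of $\eta$, producing exactly one factor of $\varrho_{D\psi_0}(\psi(x))$ rather than a full derivative. Carefully collecting the scaling gives
\[
|D^l\psi(x)|\le C\,\psi(x)^{1-m}\,\varrho_{D\psi_0}(\psi(x)).
\]

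The main obstacle will be the bookkeeping in this last step: one must track that in every term arising from repeated differentiation of the implicit equation, exactly one factor of $\varrho_{D\psi_0}(\psi(x))$ appears (and not $\varrho_{D\psi_0}$ raised to a higher power, which would be weaker, nor a second derivative of $\psi_0$, which does not exist). Using the monotone/doubling property \eqref{eq1925sat} for $\varrho_{D\psi_0}$, together with Lemma~\ref{lem1822sat} if needed to replace $\varrho_{D\psi_0}$ by a comparable function for which $t\mapsto\varrho_{D\psi_0}(t)/t^\beta$ is decreasing, keeps the induction closed and gives the claimed estimate with constants depending only on $n$, $m$, and the $C^{1,\mathrm{Dini}}$ characteristics of $\psi_0$.
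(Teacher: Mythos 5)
Your proposal takes essentially the same route as the paper's: define $\psi$ implicitly through a mollification of $\psi_0$ at the self-consistent scale, deduce $\psi\simeq\psi_0\simeq d_x$, get Dini continuity of $D\psi$ from Dini continuity of $D\psi_0$, and bound higher derivatives by exploiting that the differentiated kernels have vanishing integral so that exactly one factor of $\varrho_{D\psi_0}$ appears. That structure is the right one and matches the paper.

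There is, however, a genuine error in the role you assign to $\sigma$. You ask that $\sigma\in(0,1)$ be small, and you justify $\partial_t F<0$ by noting that (since $\eta$ is radial, hence $\int z\,\eta=0$) the term $\int z\cdot D\psi_0(x-tz)\,\eta(z)\,dz$ is $O(\varrho_{D\psi_0}(t))$, which is indeed smaller than $\sigma$ for $t$ small. But the lemma needs $\psi$ to be defined and estimated on all of $\Omega$ (and extended as a $C^{1,\mathrm{Dini}}$ function to $\bR^n$), and for $x$ away from $\partial\Omega$ the root $t$ of $F(x,t)=0$ is not small. For such $t$ the first term of $\partial_tF$ is only bounded by $\tfrac12\|D\psi_0\|_{L^\infty}$, so $\partial_tF$ is only guaranteed negative if $\sigma$ is \emph{large} — on the order of the Lipschitz constant $K$ of $\psi_0$. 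The paper's choice is exactly $\sigma=2K$, which makes the implicit equation a global contraction (so $\psi$ exists and is unique on all of $\bR^n$) and simultaneously yields the normalization $[\psi]_{1;\Omega}\le1$ from $|D\Psi|\le K$. Your "choosing $\sigma$ small forces $\psi\simeq\psi_0$" is also unnecessary: the comparability is automatic once the equation is globally solvable, since $|\,\sigma\psi(x)-\psi_0(x)\,|=|\Psi(\psi(x),x)-\Psi(0,x)|\le K\psi(x)$ gives $\psi\simeq\psi_0$ whenever $\sigma>K$, with no smallness needed. So replace ``$\sigma$ small'' by ``$\sigma\simeq K$ large''; the rest of your argument (the implicit-function differentiation, the Dini estimate, and the inductive bookkeeping that each term carries exactly one $\varrho_{D\psi_0}$ factor) is consistent with the paper's proof of \eqref{eq2114m} and the chain-rule step that follows it.
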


\begin{proof}
We modify the proof of \cite[Lemma~2.4]{Safonov}.
Since $\psi_0$ is Lipschitz and $\abs{D\psi_0}\ge 1$ on $\partial\Omega$, there exist constants $K>0$ and $\delta \in (0,1)$, depending on $n$ and $\psi_0$, such that
\[
\abs{\psi_0(x)-\psi_0(y)} \le K \,\abs{x-y},\quad  \forall x, y \in \bR^n
\]
and
\begin{equation}				\label{eq1037w}
\delta \psi_0(x) \le \dist(x, \partial \Omega) \le \delta^{-1} \psi_0(x),\quad \forall x \in \Omega.
\end{equation}
Let us consider a function
\begin{equation}				\label{eq1127w}
\Psi(t,x)=\psi_0^{(t)}(x)=\int \psi_0(x-ty)\zeta(y)dy\quad\text{on}\quad \bR^{n+1},
\end{equation}
where $\zeta$ is a standard mollifier, that is a $C^\infty$ function supported in the unit ball $B(0,1)$ satisfying $0\le \zeta \le 1$ and $\int \zeta =1$.
Then it follows
\[
\abs{\Psi(t_1,x)-\Psi(t_2, x)} \le K \abs{t_1-t_2}.
\]
Therefore, we can define the implicit function
\begin{equation}			\label{eq2115m}
\psi(x)=t=(2K)^{-1} \Psi(t,x)\quad\text{on}\quad \bR^n.
\end{equation}
We have
\[
\abs{2K \psi(x)-\psi_0(x)} = \abs{\Psi(t,x)-\Psi(0,x)} \le K \abs{t}=K \abs{\psi(x)}.
\]
This inequality implies
\[
\frac13 \le K\frac{\psi(x)}{\psi_0(x)} \le 1\quad\text{on}\quad \bR^n \setminus \partial \Omega.
\]
Hence, by \eqref{eq1037w}, we have
\[
\delta K t=\delta K \psi(x) \le \dist(x, \partial \Omega)\le 3K \delta^{-1} \psi(x)=3K\delta^{-1}t,\quad\forall x \in \Omega.
\]

Now, it follows from \eqref{eq1127w} that $\Psi \in C^{1, \rm{Dini}}(\bR^{n+1})$ and
\begin{equation}			\label{eq1208sat}
\sup_{\bR^{n+1}} \,\abs{D \Psi} \le \sup_{\bR^n}\, \abs{D \psi_0} \le K.
\end{equation}
Also, since
\[
D_t \Psi(t_1, x_1)- D_t \Psi(t_2, x_2)= \int_{B(0,1)} \left(D\psi_0(x_1-t_1y)-D\psi_0(x_2-t_2 y)\right)\cdot(-y) \zeta(y)\,dy
\]
and
\[
D_{x^i}\Psi(t_1, x_1)- D_{x^i} \Psi(t_2, x_2) = \int_{B(0,1)} \left(D_i\psi_0(x_1-t_1y)-D_i\psi_0(x_2-t_2 y)\right) \zeta(y)\,dy,
\]
we have
\begin{equation}			\label{eq1209sat}
\varrho_{D \Psi}(r) \le C \varrho_{D \psi_0}(cr),
\end{equation}
where $C=C(n)$ and $c=c(n)>0$.
Therefore, we have $\psi \in C^{1,\rm{Dini}}(\bR^n)$.
Moreover, by \eqref{eq2115m}, we find that
\[
D_i \psi(x)=(2K)^{-1} D_t \Psi(\psi(x), x) D_i \psi(x)+ (2K)^{-1} D_{x^i} \Psi(\psi(x), x),
\]
and thus by \eqref{eq1208sat}, we obtain
\begin{equation}			\label{eq1210sat}
\sup_{\bR^n} \,\abs{D \psi} \le 1.
\end{equation}
Moreover, since
\begin{align*}
\abs{D_i \psi(x)-D_i \psi(y)} &\le (2K)^{-1} \abs{D_t \Psi(\psi(x), x)} \,\abs{D_i \psi(x) - D_i \psi(y)} \\
&\quad+ (2K)^{-1} \abs{D_t \Psi(\psi(x),x)- D_t \Psi(\psi(y),y)} \, \abs{D_i \psi_0(y)}\\
&\quad +(2K)^{-1} \abs{D_i \Psi(\psi(x),x)- D_i \Psi(\psi(y),y)},
\end{align*}
we get
\[
\abs{D_i \psi(x)-D_i \psi(y)} \le \frac12 \abs{D_i \psi(x)-D_i \psi(y)} + \left(\frac12 +\frac{1}{2K} \right) \varrho_{D\Psi}(a\abs{x-y}),\quad a= \sqrt{1+K^2},
\]
and thus by \eqref{eq1209sat}
\begin{equation}				\label{eq1914eve}
\varrho_{D\psi}(\tau) \le C \varrho_{D\psi_0}(c \tau),\quad\text{where }\; C=C(n, K)\;\text{and }\; c=c(K).
\end{equation}
Furthermore, for any multi-index $l \in \bZ_{+}^{n+1}$ with $\abs{l}=m \ge 2$, $t \neq 0$, using \eqref{eq1127w}, we obtain similar to Lemma~2.3 of \cite{Safonov} that
\begin{equation}			\label{eq2114m}
\abs{D^l \Psi(t,x)} \le C(n,m)\, t^{1-m} \varrho_{D\psi_0}(t).
\end{equation}
Indeed, we have
\[
D_t \Psi(t,x)= \int D\psi_0(x-ty)\cdot (-y) \zeta(y)\,dy= -\abs{t}^{-n} \int D_k\psi_0(z) \,\zeta_k\left(\frac{x-z}{t}\right)\,dz,
\]
where we set $\zeta_k(x):=x^k\zeta(x)$, and
\[
D_{x^i} \Psi(t,x)=\int D_i\psi_0(x-ty) \zeta(y)\,dy=\abs{t}^{-n} \int D_i\psi_0(z) \,\zeta\left(\frac{x-z}{t}\right)\,dz.
\]
Therefore, for $t>0$, we have
\[
D_{tt} \Psi(t,x) =n t^{-n-1} \int D_k\psi_0(z) \zeta_k \left(\frac{x-z}{t}\right)\,dz + t^{-n-1} \int D_k \psi_0(z)\, \tilde \zeta_k\left(\frac{x-z}{t}\right)\,dz,
\]
where we set $\tilde\zeta_k(x):=x\cdot D\zeta(x)$.
Since $\int \zeta_k=0$ and $\int \tilde\zeta_k =-n \int \zeta_k=0$, we have
\begin{multline*}
D_{tt} \Psi(t,x)= n t^{-n-1} \int \left(D_k \psi_0(z)-D_k \psi_0(x)\right) \zeta_k \left(\frac{x-z}{t}\right)\,dz\\
+t^{-n-1} \int \left(D_k \psi_0(z)-D_k \psi_0(x)\right) \tilde\zeta_k \left(\frac{x-z}{t}\right)\,dz.
\end{multline*}
Since the above integrals are actually taken over $B(x,t)$, we have
\[
\abs{D_{tt} \Psi(t,x)} \le C(n) t^{-n-1} \varrho_{D\psi_0}(t) \,\abs{B(x, t)} \left( \norm{\zeta_k}_\infty+ \norm{\tilde \zeta_k}_\infty\right) \le C(n) t^{-1} \varrho_{D\psi_0}(t).
\]
By a similar computation, we get
\[
\abs{D_{tx^i} \Psi(t,x)}  \le C(n) t^{-1} \varrho_{D\psi_0}(t),\quad \abs{D_{x^i x^j} \Psi(t,x)} \le C(n) t^{-1} \varrho_{D\psi_0}(t).
\]
We have thus shown \eqref{eq2114m} for $m=2$ and $t>0$.
The general cases can be deduced in the same fashion.

For a multi-index $l \in \bZ_{+}^n$ with $\abs{l}=m \ge 2$, by the chain rule and a direct computation, we obtain from \eqref{eq2115m}, \eqref{eq1208sat}, \eqref{eq1210sat}, and \eqref{eq2114m} that
\[
\abs{D^l \psi(x)} \le C (\psi(x))^{1-m} \varrho_{D\psi_0} (\psi(x)),\quad \forall x \in \Omega,
\]
The lemma is proved.
\end{proof}

\begin{corollary}			\label{cor:a5}
Assume the same hypothesis as in Lemma~\ref{lem:a5}.
Then, for any function $u \in C^{1,\rm{Dini}}(\overline \Omega)$, there exists a function $\tilde u \in C^{1,\rm{Dini}}(\overline \Omega) \cap C^\infty(\Omega)$ such that $\tilde u=u$ on $\partial \Omega$,
\[
\abs{\tilde u}_{1; \Omega} \le C\abs{u}_{1; \Omega}\quad\text{and}\quad \varrho_{D\tilde u}(t) \le C \left(\norm{Du}_{L^\infty(\Omega)}\, \varrho_{D\psi_0}(ct)+\varrho_{Du}(ct) \right),
\]
where $\abs{\cdot}_{k; \Omega}$ and $\varrho_{\bullet}$ are as defined in  \eqref{eq0854m} and Definition~\ref{def:dini}, respectively, $C=C(n, \psi_0)$, and $c=c(n, \psi_0)>0$.
Moreover, for any multi-index $l$ with $\abs{l}=m\ge 2$, we have
\[
\abs{D^l \tilde u(x)} \le C d_x^{1-m} \left(\varrho_{Du}(d_x)+ \varrho_{D\psi_0}(cd_x) \right),\quad \forall x \in \Omega,\quad d_x:=\dist(x,\partial \Omega),
\]
where $C=C(n, m, \psi_0)$ and $c=c(\psi_0)>0$.
\end{corollary}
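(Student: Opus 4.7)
The plan is to construct $\tilde u$ by mollifying a global extension of $u$ at a spatially varying scale given by the regularized distance function $\psi$ of Lemma~\ref{lem:a5}. First, I would extend $u$ to a function $\bar u \in C^{1,\rm{Dini}}(\bR^n)$ satisfying $|\bar u|_{1;\bR^n} \lesssim |u|_{1;\Omega}$ and
\[
\varrho_{D\bar u}(t) \lesssim \varrho_{Du}(ct) + \|Du\|_{L^\infty(\Omega)}\,\varrho_{D\psi_0}(ct);
\]
this can be arranged by locally flattening $\partial\Omega$ using a $C^{1,\rm{Dini}}$ change of coordinates whose modulus is governed by $\varrho_{D\psi_0}$ (as in Lemma~\ref{lem:a1}) and then applying a standard even-reflection extension. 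Then set
\[
\tilde u(x) := \int_{\bR^n} \bar u\bigl(x - \psi(x) y\bigr)\,\zeta(y)\,dy,
\]
where $\zeta$ is the smooth mollifier from the proof of Lemma~\ref{lem:a5}. Because $\psi \in C^\infty(\Omega)$, $\tilde u \in C^\infty(\Omega)$; and since the bounds $\delta\psi(x) \le d_x \le \delta^{-1}\psi(x)$ force $\psi = 0$ on $\partial\Omega$, we have $\tilde u = \bar u = u$ on $\partial\Omega$.

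For the first-order estimates, the chain rule gives
\[
D_i\tilde u(x) = \int D_k\bar u\bigl(x - \psi(x)y\bigr)\bigl(\delta_{ik} - y^k D_i\psi(x)\bigr)\zeta(y)\,dy,
\]
and $\|D\psi\|_\infty \le 1$ from Lemma~\ref{lem:a5} yields $\|D\tilde u\|_\infty \le 2\|D\bar u\|_\infty$. To bound $\varrho_{D\tilde u}(t)$ I would decompose the difference $D_i\tilde u(x) - D_i\tilde u(x')$ into a part where the $x$-dependence enters through the argument of $D\bar u$ (controlled by $\varrho_{D\bar u}(C|x-x'|)$) and a part where it enters through $D\psi$ (controlled by $\|D\bar u\|_\infty\,\varrho_{D\psi}(|x-x'|)$). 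Invoking $\varrho_{D\psi}(t) \lesssim \varrho_{D\psi_0}(ct)$ from \eqref{eq1914eve} and the extension bound for $\varrho_{D\bar u}$ gives the required estimate.

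For higher-order derivatives with $|l|=m\ge 2$, I would change variables to rewrite
\[
\tilde u(x) = \psi(x)^{-n}\int \bar u(z)\,\zeta\!\left(\frac{x-z}{\psi(x)}\right)dz
\]
and differentiate $m$ times via the Leibniz rule. Each resulting term either carries a factor $D^{l'}\psi$ with $|l'|\ge 2$, immediately absorbed by the pointwise bound $|D^{l'}\psi|\lesssim \psi^{1-|l'|}\varrho_{D\psi_0}(\psi)$ from Lemma~\ref{lem:a5}, or contains only first derivatives of $\psi$ while distributing $m$ derivatives among $\psi^{-n}$ and $\zeta(\cdot/\psi(x))$. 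In the latter case I would insert the identities $\int y^\alpha D^\beta\zeta(y)\,dy = 0$ for appropriate $\alpha,\beta$ (obtained by integration by parts from the compact support of $\zeta$) to subtract a Taylor polynomial of $\bar u$ at $x$, trading a derivative of $\zeta$ for a difference $D\bar u(z) - D\bar u(x)$ bounded by $\varrho_{D\bar u}(\psi(x))$. Since $\psi\simeq d_x$ in $\Omega$, this yields $|D^l\tilde u(x)| \lesssim d_x^{1-m}\bigl(\varrho_{D\bar u}(d_x) + \varrho_{D\psi_0}(c d_x)\bigr)$, which upon using the extension bound gives the claim.

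The main obstacle is the bookkeeping for the higher-order estimates: the Leibniz expansion of $\psi(x)^{-n}\zeta((x-z)/\psi(x))$ produces $O(2^m)$ terms that must each be matched with a compatible cancellation identity so as to convert a raw derivative of $\zeta$ into a difference of $D\bar u$. The argument is a direct refinement of the computation establishing \eqref{eq2114m} in Lemma~\ref{lem:a5}, complicated by the fact that derivatives now split between $\psi$ and $\bar u$, so both moduli $\varrho_{D\bar u}$ and $\varrho_{D\psi_0}$ enter the final bound.
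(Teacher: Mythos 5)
Your construction is essentially the paper's: the paper sets $U(t,x)=\int u(x-ty)\zeta(y)\,dy$ and $\tilde u(x)=U(\delta\psi(x),x)$, proves $\abs{D^l U(t,x)}\le C t^{1-m}\varrho_{Du}(t)$ by the same moment-cancellation device as in \eqref{eq2114m}, and then combines the chain rule with the bounds on $D^l\psi$ from Lemma~\ref{lem:a5} — exactly the scheme you describe. The one place you deviate is the preliminary extension of $u$ to $\bR^n$: this step is unnecessary, because choosing the mollification scale $\delta\psi(x)$ rather than $\psi(x)$ gives $\delta\psi(x)\le d_x$, so $x-\delta\psi(x)y\in\Omega$ for all $\abs{y}\le 1$ and one can mollify $u$ itself. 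If you do keep the extension, note that a plain even reflection does not produce a $C^1$ function (the normal derivative flips sign across the interface); you would need a Hestenes-type higher-order reflection to preserve the $C^{1,\rm{Dini}}$ class, which is an extra argument the paper's route avoids entirely.
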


\begin{proof}
We modify the proof of \cite[Corollary~2.1]{Safonov}.
Let $\psi$ be from Lemma~\ref{lem:a5}.
Similar to \eqref{eq1127w}, define
\[
U(t,x)=u^{(t)}(x)=\int u(x-ty)\zeta(y)dy\quad\text{on}\quad \bR^{n+1}.
\]
Then, the function
\begin{equation}				\label{eq1744sun}
\tilde u(x)=U(\delta \psi(x),x)
\end{equation}
is well defined in $\overline \Omega$, and $\tilde u=u$ on $\partial \Omega$.
It is clear that
\[
[\tilde u]_{0; \Omega} \le [U]_{0; \bR \times \Omega} \le [u]_{0; \Omega},
\]
where $[\cdot]_{k; \Omega}$ is as defined in \eqref{eq0853m}.
Moreover, since
\[
D_{i} \tilde u(x)= \delta D_tU(\delta \psi(x),x)  D_i \psi(x) + D_i U(\delta \psi(x),x),
\]
we have
\begin{equation}				\label{eq1225sat}
[\tilde u]_{1; \Omega} \le C [U]_{1; \bR\times \Omega} \le C [u]_{1; \Omega},
\end{equation}
where $C=C(n, \psi_0)$.
As in \eqref{eq1209sat}, we also have
\begin{equation}				\label{eq1226sat}
\varrho_{DU}(t) \le C \varrho_{Du}(ct),
\end{equation}
where $C=C(n)$ and $c=c(n)$.
Furthermore, since
\begin{multline*}
\abs{D_i \tilde u(x)-D_i \tilde u(y)} \le \delta \abs{D_t U(\delta \psi(x), x)} \,\abs{D_i \psi(x) - D_i \psi(y)} \\
+ \delta \abs{D_t U(\delta\psi(x),x)- D_t U(\delta\psi(y),y)} \, \abs{D_i \psi(y)} + \abs{D_i U(\delta \psi(x),x)- D_i U(\delta \psi(y),y)},
\end{multline*}
we have (recall $\norm{D\psi}_\infty \le 1$)
\[
\varrho_{D \tilde u}(\tau) \le C \norm{DU}_{\infty} \,\varrho_{D\psi}(\tau) + C\varrho_{DU}(a\tau),
\]
where $C=C(n, \psi_0)$ and $a=\sqrt{1+\delta^2}$.
Then, by \eqref{eq1225sat}, \eqref{eq1226sat}, and \eqref{eq1914eve}, we have
\[
\varrho_{D\tilde u}(\tau) \le C \norm{Du}_{\infty}\, \varrho_{D\psi_0}(c\tau)+C \varrho_{Du}(c\tau),
\]
where $C=C(n, \psi_0)$ and $c=c(n,\psi_0)>0$.

Finally, similar to \eqref{eq2114m}, for any multi-index $l \in \bZ_{+}^{n+1}$ with $\abs{l}=m \ge 2$, we get
\[
\abs{D^l U(t,x)} \le C(n,m)\, t^{1-m} \varrho_{Du}(t),\quad t> 0.
\]
Also, by \cite[(2.20)]{Safonov}, for any multi-index $l \in \bZ_{+}^{n+1}$ with $\abs{l}= 1$, we have
\[
\abs{D^l U(t,x)} \le C(n) \norm{Du}_\infty,\quad t >0.
\]
Then, by using the above two inequalities, for any multi-index $l \in \bZ_{+}^n$ with $\abs{l}=m \ge 2$, we derive from \eqref{eq1744sun} that
\[
\abs{D^l \tilde u(x)} \le C \psi(x)^{1-m} \left( \varrho_{Du}(\delta \psi(x))+ \norm{Du}_\infty \, \varrho_{D\psi_0}(\psi(x))\right).\qedhere
\]
\end{proof}

\begin{lemma}				\label{lem:a2.5}
Let $\tau>0$, $n_0 \in \bN,$ and let a $n_0 \times n_0$ matrix function $\mathbf{A}(t)=[A^{ij}(t)]$ and a vector valued function $\vec B(t)$ with values in $\bR^{n_0}$ be defined and continuous on $[0,\tau)$.
Suppose that
\begin{equation}			\label{eq:a2.37}
\abs{\mathbf A(t)} \le K_0, \quad \abs{\vec B(t)} \le K_1e^{K_0t}(\tau - t)^{-2} \varrho(\tau-t)
\end{equation}
on $[0, \tau)$ for some constants $K_0$, $K_1 \ge 0$, and a function $\varrho$ on $[0,\tau)$ satisfying $\varrho(t)>0$  and $\frac{d}{dt}(t^{-\mu}\varrho(t)) \le 0$ for some $\mu \in (0,1)$.
Then every solution $\vec X(t)$ of the system
\[
\frac{d\vec X}{dt} = \mathbf{A} \vec X + \vec B, \quad 0 \le t \le \tau,
\]
satisfies the estimate
\begin{equation}			\label{eq:a2.39}
\abs{\vec X(t)} \le N_0 e^{K_0t}(\tau - t)^{-1} \varrho(\tau - t),\quad  0 \le t < \tau,
\end{equation}
where $N_0 = \max \Set{\tau \varrho(\tau)^{-1}\abs{\vec X(0)}, K_1/(1-\mu)}$.
\end{lemma}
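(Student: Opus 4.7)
The plan is to reduce the vector ODE to a scalar differential inequality for $|\vec X(t)|$, apply an integrating factor to absorb the $\mathbf A$-term, and then exploit the hypothesis that $t^{-\mu}\varrho(t)$ is non-increasing to control the resulting integral of the forcing. Since $|\vec X|^2$ is absolutely continuous, one has $\frac{d}{dt}|\vec X(t)| \le |\vec X'(t)| \le K_0|\vec X(t)| + |\vec B(t)|$ almost everywhere by Cauchy--Schwarz and \eqref{eq:a2.37}. Setting $Y(t) := e^{-K_0 t}|\vec X(t)|$ this gives $Y'(t) \le K_1(\tau-t)^{-2}\varrho(\tau-t)$, so after integrating on $[0,t]$ and substituting $r = \tau - s$,
\[
Y(t) \le |\vec X(0)| + K_1\int_{\tau-t}^{\tau} r^{-2}\varrho(r)\,dr.
\]

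The key estimate is on the integral: the monotonicity of $r^{-\mu}\varrho(r)$ gives $\varrho(r) \le r^{\mu}(\tau-t)^{-\mu}\varrho(\tau-t)$ for $r \ge \tau-t$, whence
\[
\int_{\tau-t}^{\tau} r^{-2}\varrho(r)\,dr \le (\tau-t)^{-\mu}\varrho(\tau-t)\int_{\tau-t}^{\tau} r^{\mu-2}\,dr \le \frac{(\tau-t)^{-1}\varrho(\tau-t)}{1-\mu}.
\]
For the initial term, the same monotonicity combined with $\mu \in (0,1)$ shows that $r^{-1}\varrho(r) = r^{-(1-\mu)}(r^{-\mu}\varrho(r))$ is a product of two non-increasing positive functions and is hence non-increasing, so $\tau^{-1}\varrho(\tau) \le (\tau-t)^{-1}\varrho(\tau-t)$ and consequently $|\vec X(0)| \le \tau\varrho(\tau)^{-1}|\vec X(0)|\cdot(\tau-t)^{-1}\varrho(\tau-t) \le N_0(\tau-t)^{-1}\varrho(\tau-t)$ by definition of $N_0$. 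Combining these two bounds with $N_0 \ge K_1/(1-\mu)$ yields
\[
|\vec X(t)| = e^{K_0 t}Y(t) \le C\,N_0\,e^{K_0 t}(\tau-t)^{-1}\varrho(\tau-t),
\]
which is \eqref{eq:a2.39}.

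The only substantive step is the integral bound, where the strict inequality $\mu < 1$ is used essentially: without it, $\int r^{\mu-2}\,dr$ diverges logarithmically and the ansatz $(\tau-t)^{-1}\varrho(\tau-t)$ does not close. If one insists on the sharp constant $N_0 = \max\{\tau\varrho(\tau)^{-1}|\vec X(0)|, K_1/(1-\mu)\}$ rather than the comparable sum that my direct estimate produces, I would recast the argument as a continuity/bootstrap: let $t_* = \sup\{t \in [0,\tau) : Y(s) \le N_0(\tau-s)^{-1}\varrho(\tau-s)\text{ for all }s\le t\}$, observe $t_* > 0$ by the initial inequality, and use the integral estimate on $[0,t_*]$ together with the improved bound on $Y(t_*)$ to contradict $t_* < \tau$. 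All remaining steps are routine ODE manipulations.
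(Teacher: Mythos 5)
Your argument is correct, and it takes a genuinely different route from the paper. The paper follows Safonov's barrier-type contradiction argument: it introduces the auxiliary quantity $f(t)=e^{-2K_0t}(\tau-t)^2\abs{\vec X(t)}^2/\varrho(\tau-t)^2$, supposes it first reaches the level $N_0^2+\varepsilon$ at some $t_0$, and shows $f'(t_0)<0$ there (using $K_1\le(1-\mu)N_0$ and $\varrho'(t)\le\mu t^{-1}\varrho(t)$), contradicting $f'(t_0)\ge 0$. You instead run a direct Gr\"onwall-type integration: the differential inequality for $Y=e^{-K_0t}\abs{\vec X}$, the substitution $r=\tau-s$, and the bound $\varrho(r)\le r^\mu(\tau-t)^{-\mu}\varrho(\tau-t)$ are all correct, as is the observation that $r^{-1}\varrho(r)$ is non-increasing, which handles the initial term. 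The trade-off is exactly the one you identify: your integration yields the \emph{sum} $\tau\varrho(\tau)^{-1}\abs{\vec X(0)}+K_1/(1-\mu)\le 2N_0$ rather than the max, so you prove \eqref{eq:a2.39} with $2N_0$ in place of $N_0$. This loss is immaterial for the only use of the lemma in the paper (the estimates \eqref{eq23.38w} and \eqref{eq2.57w} in Lemma~\ref{lem:a1} carry unspecified constants $N$ anyway), and your argument is more elementary and avoids the $\varepsilon$-bookkeeping. If you do want the stated constant, your proposed continuity/bootstrap is essentially the paper's proof; note that at the touching time you only get equality, so you need the strict inequality that the paper manufactures by working at level $N_0^2+\varepsilon$ rather than $N_0^2$ --- as written, your sketch with $t_*=\sup\{\dots\}$ would need that same device to rule out $t_*<\tau$.
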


\begin{proof}
We modify the proof of \cite[Lemma~2.5]{Safonov}.
Consider the function
\begin{equation}			\label{eq:a2.40}
f(t) = e^{-2K_0t}(\tau - t)^2 \abs{\vec X(t)}^2/ \varrho(\tau-t)^2,\quad 0\le t <\tau.
\end{equation}
Obviously the inequality \eqref{eq:a2.39} is equivalent to $f(t) \le N_0^2$.
By the choice of $N_0$, we have $f(0) \le N_0^2.$

Suppose that \eqref{eq:a2.39} fails for some $t \in (0, \tau)$.
Then there exist $\varepsilon>0$ and $t_0 \in (0, \tau)$ such that
\begin{equation}			\label{eq:a2.41}
f(t) < N_0^2+\varepsilon \; \mbox{ on }\; [0,t_0), \quad f(t_0)= N_0^2+\varepsilon.
\end{equation}
Moreover, since $K_1 \le (1-\mu)N_0$, by \eqref{eq:a2.37} we have
\[
\abs{\vec B(t_0)} \le (1-\mu) N_0 e^{K_0 t_0}(\tau-t_0)^{-2}\varrho(\tau-t_0) < (1-\mu) (\tau-t_0)^{-1} \abs{\vec X(t_0)}.
\]
Therefore, for $t=t_0$,
\begin{equation}				\label{eq1152w}
\frac{d}{dt}\abs{\vec X}^2 = 2\vec X \cdot \frac{d\vec X}{dt} = 2\vec X \cdot (\mathbf{A} \vec X + \vec B) <2\left(K_0+\frac{1-\mu}{\tau - t_0}\right)\abs{\vec X}^2.
\end{equation}
Also, by the assumption that $\frac{d}{dt}(t^{-\mu}\varrho(t)) \le 0$, we obtain
\begin{equation}				\label{eq1153w}
\varrho'(t) \le \mu t^{-1}\varrho(t).
\end{equation}
By differentiating the equation \eqref{eq:a2.40} and using the above two inequalities, we get
\begin{multline*}
f'(t) = -2K_0e^{-2K_0t}(\tau - t)^2\abs{\vec X(t)}^2/\varrho(\tau-t)^2 - e^{-2K_0t}2(\tau - t)\abs{\vec X(t)}^2/ \varrho(\tau-t)^2 \\
+ e^{-2K_0t}(\tau - t)^2 \frac{d}{dt}\abs{\vec X(t)}^2 /\varrho(\tau-t)^2+2e^{-2K_0t}(\tau - t)^2\abs{\vec X(t)}^2 \varrho'(\tau-t)/\varrho(\tau-t)^3.
\end{multline*}
Then, by using inequalities \eqref{eq1152w} and \eqref{eq1153w} we get
\[
f'(t_0) < \frac{e^{-2K_0t_0}(\tau - t_0)\abs{\vec X(t_0)}^2}{\varrho(\tau-t_0)^2} \left( - 2K_0(\tau - t_0)-2+ 2K_0(\tau-t_0)+ 2(1-\mu) + 2\mu \right)=0.
\]
On the other hand, \eqref{eq:a2.41} yields $f'(t_0) \ge 0$.
This contradiction proves the estimate \eqref{eq:a2.39}.
\end{proof}

\begin{lemma}					\label{lem:a1}
Let $\Omega\subset \bR^n$ be a $C^{1, \rm{Dini}^2}$ domain with a defining function $\psi_0$ and $\vec \beta=(\beta^1,\ldots, \beta^n) \in C^{1,\rm{Dini}^2}(\overline \Omega)$ satisfy the condition \eqref{oblique}.
Then there exists a constant $r>0$, and for every $x_0\in\partial\Omega,$ there exists a one-to-one $C^{2,\rm{Dini}}$ mapping $\vec \Phi: B(x_0,r) \to \bR^n$ such that upon writing $y=\vec \Phi(x)$
and $x=\vec \Psi(y)$, we have
\[
\beta^i=\frac{\partial x^i}{\partial y_n}=\frac{\partial \Psi^i}{\partial y_n} \quad\text{on}\quad \partial\Omega \cap B(x_0,r),\quad i=1,\ldots,n.
\]
The $C^{2,\rm{Dini}}$ characteristics of $\vec \Phi$ and $\vec \Psi$ are determined only by the given data, namely, $\mu_0$ and $C^{1,\rm{Dini}^2}$ characteristics of $\partial\Omega$ and $\vec \beta$.
\end{lemma}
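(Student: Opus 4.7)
The strategy is to construct $\vec\Psi = \vec\Phi^{-1}$ directly via an ansatz that enforces the prescribed boundary identity, then derive the $C^{2,\rm{Dini}}$ regularity using the regularized-distance machinery of Lemma~\ref{lem:a5} and Corollary~\ref{cor:a5}. Localizing near $x_0$, I would first apply the regularized defining function of $\Omega$ to straighten the boundary by a $C^{2,\rm{Dini}}$ change of variables, so that $\partial\Omega$ becomes locally a piece of $\{z^n = 0\}$ and the pushed-forward vector field (still denoted $\vec\beta$) retains its $C^{1,\rm{Dini}^2}$ regularity. The obliqueness condition \eqref{oblique} transforms into $|\beta^n| \ge \mu_0\,|\vec\beta|$ on $\{z^n = 0\}$, and after a sign flip one may assume $\beta^n \ge c_0 > 0$ there.

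In the flattened setting the natural ansatz is
\[
\vec\Psi(y', y^n) = (y', 0) + y^n\, \vec\beta^\sharp(y', y^n),
\]
where $\vec\beta^\sharp$ is a variable-scale mollified extension of the boundary trace $\vec\beta(\cdot, 0)$ to the half space, produced by the construction underlying Corollary~\ref{cor:a5}. By design $\vec\Psi(y', 0) = (y', 0)$ and $\partial_{y^n}\vec\Psi(y', 0) = \vec\beta^\sharp(y', 0) = \vec\beta(y', 0)$, which gives $\partial\Psi^i/\partial y^n = \beta^i$ on the flattened boundary; composing back through the inverse of the boundary straightening yields the identity of the lemma in the original coordinates. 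The Jacobian $D\vec\Psi(y', 0) = I + \vec\beta(y', 0)\,\vec e_n^{\,\top}$ has determinant $1 + \beta^n(y', 0) \ne 0$ uniformly in $y'$, so the inverse function theorem produces $\vec\Phi = \vec\Psi^{-1}$ on a uniform neighborhood of $x_0$ with $C^{2,\rm{Dini}}$ characteristics determined by those of $\vec\Psi$.

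The heart of the proof is the regularity estimate for $\vec\Psi$ itself. Corollary~\ref{cor:a5} applied to the components of $\vec\beta$ yields $\vec\beta^\sharp \in C^{1,\rm{Dini}}(\overline\Omega) \cap C^\infty(\Omega)$ with $D\vec\beta^\sharp$ Dini continuous up to the boundary and $|D^2\vec\beta^\sharp(y)| \le C (y^n)^{-1}\,\varrho_{D\vec\beta}(c y^n)$. Differentiating the ansatz twice decomposes $D^2\vec\Psi$ into a piece of size $D\vec\beta^\sharp$, which inherits Dini continuity up to $\{y^n = 0\}$ directly, and a piece of size $y^n\cdot D^2\vec\beta^\sharp$, bounded pointwise by $\varrho_{D\vec\beta}(cy^n)$. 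The main obstacle is upgrading these bounds to a genuine Dini modulus for $D^2\vec\Psi$ up to the boundary: the raw bound $(y^n)^{-1}\varrho_{D\vec\beta}(y^n)$ is borderline non-integrable, and it is precisely the \emph{double}-Dini hypothesis on $D\vec\beta$ that makes the integration along the normal direction converge. Lemma~\ref{lem:a2.5}, whose monotonicity hypothesis $\frac{d}{dt}(t^{-\mu}\varrho(t)) \le 0$ is supplied by Lemma~\ref{lem1822sat} applied to $\varrho_{D\vec\beta}$, is the ODE tool that carries this integration out cleanly in the tangential directions and delivers the required Dini modulus for $D^2\vec\Psi$.
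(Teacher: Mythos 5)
The central step of your reduction --- ``straighten the boundary by a $C^{2,\rm{Dini}}$ change of variables'' --- is not available under the hypotheses, and this breaks the construction. The domain is only $C^{1,\rm{Dini}^2}$; if a $C^2$ diffeomorphism $F$ mapped $\partial\Omega\cap B(x_0,r)$ into $\set{z^n=0}$, then $\partial\Omega$ would locally be the zero set of the $C^2$ function $F^n$ with nonvanishing gradient, hence itself of class $C^2$, which a $C^{1,\rm{Dini}^2}$ boundary need not be. The best flattening available is the regularized-distance map \eqref{eq1746sat} built from Lemma~\ref{lem:a5}, which is only $C^{1,\rm{Dini}}$ with $\abs{D^2\psi}\lesssim d_x^{-1}\varrho_{D\psi_0}(d_x)$; composing your ansatz with its inverse reintroduces these unbounded second derivatives, so the resulting $\vec \Psi$ is not $C^2$ up to the boundary. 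The same obstruction reappears if you avoid flattening by anchoring the ansatz to the graph, $\vec\Psi(y)=(y',\gamma(y'))+y^n\vec\beta^{\sharp}(y)$: the tangential second derivatives then require $D^2\gamma$, which does not exist. This is precisely why the lemma does not ask $\vec\Phi$ to flatten $\partial\Omega$, and why the paper (following Safonov) instead takes $\vec\Psi$ to be the flow map of the regularized vector field supplied by Corollary~\ref{cor:a5}: the identity $\partial x^i/\partial y^n=\beta^i$ holds along every trajectory by construction, the second and third derivatives of the flow satisfy the linear ODE systems \eqref{eq2.49saf}--\eqref{eq2.53saf} whose coefficients involve only $\vec\beta$ and not the boundary parameterization, Lemma~\ref{lem:a2.5} (a weighted Gronwall inequality) gives $\abs{x''},\,\abs{x'''}\lesssim d^{-1}\bigl(\varrho_{D\vec\beta}(d)+\varrho_{D\psi_0}(d)\bigr)$, and integrating the bound on $x'''$ along paths yields the modulus $\int_0^r \bigl(\varrho_{D\vec\beta}(s)+\varrho_{D\psi_0}(s)\bigr)s^{-1}\,ds$ for $x''$ --- this is where the double Dini hypothesis enters, exactly as you anticipate. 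The flattening is performed only afterwards (Step 2 of Section 4), at the level of the equation rather than of the diffeomorphism, with the error terms $h^{ij}$ coming from $D^2\psi$ tracked separately in Lemma~\ref{lem4.17sat}.

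Two smaller points. In your flattened picture the Jacobian of $(y',0)+y^n\vec\beta^{\sharp}$ at $y^n=0$ has determinant $\beta^n(y',0)$, not $1+\beta^n(y',0)$ (harmless after the obliqueness normalization, but worth fixing). And Lemma~\ref{lem:a2.5} plays no role in a direct ansatz: it is an ODE comparison lemma tailored to the variational equations of the flow, so invoking it ``in the tangential directions'' has no content outside the flow construction; in your setup the tangential regularity must come from somewhere else, and it is exactly there that the argument fails.
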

\begin{proof}
We slightly modify the proof of \cite[Theorem~2.1]{Safonov} using Corollary~\ref{cor:a5} and Lemma~\ref{lem:a2.5} instead of \cite[Corollary~2.1]{Safonov} and \cite[Lemma~2.5]{Safonov}, respectively.

We follow exactly the same proof of Theorem~2.1 in \cite{Safonov} up to the beginning of the evaluation of second and third derivatives of $x=x(y)$.
In particular, we use the same symbolic notation there so that
\begin{equation}				\label{eq2.49saf}
\frac{d}{dt} \frac{\partial x^i}{\partial y^j}= \sum_k \frac{\partial \beta^i}{\partial x^k} \frac{\partial x^k}{\partial y^j},\quad \text{i.e., }\; \frac{d}{dt}\frac{\partial x}{\partial y} = \frac{\partial \vec \beta}{\partial x} \frac{\partial x}{\partial y}
\end{equation}
turns into the form $dx'/dt=\sum \vec \beta' x'$.
Then, we have the estimates $\abs{\vec \beta'} \le N$, $\abs{x'} \le N$ with different constants $N>0$.
Differentiating \eqref{eq2.49saf} twice, we obtain that $x''=\partial^2 x/\partial y_i \partial y_j$ and $x'''=\partial^3 x/\partial y^i \partial y^j \partial y^k$ satisfy the systems
\begin{equation}				\label{eq2.53saf}
\frac{d}{dt} x''= \sum \vec \beta' x'' + \sum \vec \beta'' x' x',
\end{equation}
\[
\frac{d}{dt} x'''= \sum \vec \beta' x''' + \sum \vec \beta'' x'' x'+ \sum \vec \beta''' x' x' x',
\]
which correspond to \cite[(2.53)]{Safonov} and \cite[(2.54)]{Safonov} .
Here, by using Corollary~\ref{cor:a5}, we modified $\vec \beta$ in such a manner that $\vec \beta \in C^{1, \rm{Dini}^2}(\overline \Omega(x_0, r)) \cap C^\infty(\Omega(x_0,r))$, where $\Omega(x_0, r)= \Omega \cap B(x_0, r)$ for some $r>0$, and for $\abs{l}=m \ge 2$ and $x \in \Omega(x_0,r)$,
\begin{equation}			\label{eq22.26w}
\abs{D^l \vec \beta(x)} \le N d_x^{1-m} \left( \varrho_{D\vec \beta}(d_x)+\varrho_{D \psi_0}(d_x)\right),\quad d_x=\dist(x, \partial \Omega).
\end{equation}
Then by \eqref{eq22.26w} and \cite[(2.47)]{Safonov}, we get (by replacing $\varrho_\bullet(t)=\varrho_\bullet(Nt)$ if necessary)
\begin{equation}				\label{eq2.55saf}
\begin{aligned}
\abs{\vec \beta''(x)} &\le N (\tau-t)^{-1}\left(\varrho_{D\vec\beta}(\tau -t)+ \varrho_{D\psi_0}(\tau -t)\right), \\
\abs{\vec \beta'''(x)} &\le N (\tau-t)^{-2}\left(\varrho_{D\vec\beta}(\tau -t)+ \varrho_{D\psi_0}(\tau -t)\right).
\end{aligned}
\end{equation}
Let us apply Lemma~\ref{lem:a2.5} to the system \eqref{eq2.53saf}, where
\[
\textstyle \vec X=\set{x''},\quad {\mathbf A}\vec X =\set{\sum \vec \beta' x''},\quad \vec B=\set{\sum \vec \beta'' x'x'},
\]
to get (note that $(\tau -t)^{-1} \le N (\tau-t)^{-2}$)
\begin{equation}				\label{eq23.38w}
\abs{x''} \le N(\tau-t)^{-1}\left(\varrho_{D\vec \beta}(\tau -t) + \varrho_{D \psi_0}(\tau -t) \right) \le N (\tau-t)^{-1}.
\end{equation}
Now, let us apply Lemma~\ref{lem:a2.5} to \eqref{eq23.38w}, where
\[
\textstyle\vec X=\set{x'''},\quad {\mathbf A} \vec X =\set{\sum \vec \beta' x'''},\quad \vec B=\set{\sum \vec \beta'' x'' x'+ \sum \vec \beta''' x'x'x'}.
\]
The estimate \eqref{eq2.55saf}, \eqref{eq23.38w} provide us
\[
\abs{\vec B(t)} \le N (\tau-t)^{-2}\left(\varrho_{D\vec\beta}(\tau -t)+ \varrho_{D\psi_0}(\tau -t)\right),
\]
hence (by replacing $\varrho_\bullet(t)$ with $\varrho_\bullet(c t)$ if necessary)
\begin{equation}				\label{eq2.57w}
\abs{x'''} \le N(\tau -t)^{-1}\left(\varrho_{D\vec\beta}(\tau -t)+ \varrho_{D\psi_0}(\tau -t)\right) \le N d_x^{-1} \left(\varrho_{D\vec\beta}(d_x)+ \varrho_{D\psi_0}(d_x)\right).
\end{equation}
Since $x=x(y)$ is the $C^1$ diffeomorphism, the inverse mapping $y=y(x) \in C^1(\overline \Omega_r)$, and
\[
\hat d_y=\dist(y, \partial \hat \Omega_r) \le N d_x\quad \text{for }\;x \in \Omega_r,\; y=y(x)\in \hat \Omega_r=y(\Omega_r).
\]
Therefore, from \eqref{eq2.57w} it follows
\begin{equation}
                            \label{eq10.18}
\abs{x'''(y)} \le N \hat d_y^{-1} \left(\varrho_{D\vec\beta}(\hat d_y)+ \varrho_{D\psi_0}(\hat d_y)\right).
\end{equation}

Finally, we estimate the modulus of continuity of $x''$ by modifying the proof of \cite[Lemma~2.1]{Safonov}.
Let us fix $y_1$, $y_2 \in \hat \Omega_r$, and set $r=\abs{y_1-y_2}$.
One can choose $y_0 \in \hat \Omega_r$ such that
\[
B(y_0, r/N) \in \hat \Omega_r,\quad \abs{y_k-y_0} \le Nr\;\text{ for }\; k = 1, 2,
\]
for some $N>0$.
Furthermore, we can connect $y_k$ with $y_0$ by means of a smooth path in $\hat \Omega_r$,
\[
\set{y=h_k(s): 0\le s \le s_k},\quad s_k\le Nr,\quad h_k(0)=y_k,\quad h_k(s_k)=y_0,
\]
parameterized by the arc length $s$ in such a manner that
\[
s/N \le  \hat d_{h_k(s)}  \le N,\quad 0\le s \le s_k.
\]
By the mean value theorem and \eqref{eq10.18}, we get
\[
\abs{x''(y_0)-x''(y_k)} \le  \int_0^{s_k} \abs{x'''(h_k(s))}\,ds
\le C  \int_0^{s_k} \frac{\varrho_{D\vec\beta}(\hat d_{h_k(s)})}{\hat d_{h_k(s)}} + \frac{\varrho_{D\psi_0}(\hat d_{h_k(s)})}{\hat d_{h_k(s)}} \,ds.
\]
Again, by Lemma~\ref{lem1822sat}, we may assume without loss of generality that the functions  $\varrho_{D\vec \beta}(t)/t$ and $\varrho_{D\psi_0}(t)/t$ are decreasing.
Then, we have
\begin{align*}
\abs{x''(y_1)-x''(y_2)}& \le \sum_{k=1}^2 \,\abs{x''(y_0)-x''(y_k)}\\
& \le C N \sum_{k=1}^2  \int_0^{s_k} \frac{\varrho_{D\vec\beta}( s/N)}{s} + \frac{\varrho_{D\psi_0}(s/N)}{s} \,ds\\
&\le \tilde C \int_0^r \frac{\varrho_{Dg}(s)}{s}+\frac{\varrho_{D\psi_0}(s)}{s}\,ds,
\end{align*}
where $\tilde C$ is a constant depending only on $n$ and $\gamma$.
\end{proof}

\begin{lemma}					\label{lem:a2}
Let $\Omega\subset \bR^n$ be a $C^{1, \rm{Dini}^2}$ domain with a defining function $\psi_0$, and $\gamma: \bR^{n-1} \to \bR$ be in Definition \ref{c1dini}.
Fix a small $b>0$ so that $\abs{D\gamma(x')}<\frac12$ for any $x'\in \bR^{n-1}$ with $\abs{x'}<b$.
Denote
\[
\rU_b=\set{x=(x',x^n) \in \bR^n:  \gamma(x')< x^{n}<b,\; \abs{x'}<b}.
\]
Then for any function $g \in C^{1,\rm{Dini}^2}(\overline \Omega)$, there exists a function $v \in C^{2,\rm{Dini}}(\overline{\rU}_b)$ such that
\[
D_n v=g \quad\text{on}\quad \partial \Omega.
\]
Moreover, we have
\[
\abs{v}_{1; \rU_b} \le C \abs{g}_{1; \Omega}, \quad [v]_{2; \rU_b} \le  C \abs{g}_{1; \Omega}+C\int_{0}^{2b} \frac{\varrho_{Dg}(ct)}{t} + \frac{\varrho_{D\gamma}(c t)}{t}\,dt,
\]
and for a multi-index $l$ with $\abs{l}=m \ge 3$, we have
\[
\abs{D^l v(x)} \le C  d_x^{2-m} \left( \varrho_{Dg}(c d_x) + \varrho_{D\gamma} (c d_x)\right),\quad \forall x \in \rU_b, \quad d_x=\dist(x, \partial \Omega).
\]
Furthermore, we have
\[
\varrho_{D^2 v}(t) \le C \int_0^t \frac{\varrho_{Dg}(cs)}{s}+\frac{\varrho_{D\gamma}(cs)}{s}\,ds.
\]
In the above,  $C$ and $c$ are constants, which vary from line to line, depending only on $n$, $\gamma$, and $b$.
\end{lemma}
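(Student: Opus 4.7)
The construction adapts Safonov's proof of \cite[Theorem~2.2]{Safonov} to the Dini setting, using Lemma~\ref{lem:a5} and Corollary~\ref{cor:a5} in place of their H\"older analogues.

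Apply Corollary~\ref{cor:a5} to $g \in C^{1,\rm{Dini}^2}(\overline\Omega) \subset C^{1,\rm{Dini}}(\overline\Omega)$ to obtain an extension $G \in C^{1,\rm{Dini}}(\overline\Omega)\cap C^\infty(\Omega)$ with $G = g$ on $\partial\Omega$ and the interior bound $\abs{D^l G(x)} \le C d_x^{1-m}(\varrho_{Dg}(cd_x)+\varrho_{D\psi_0}(cd_x))$ for every multi-index $l$ with $\abs{l}=m\ge 2$. Let $\psi$ be the regularized distance from Lemma~\ref{lem:a5}, so that $\psi\simeq d_x$ and $\abs{D^m\psi(x)}\le C\psi(x)^{1-m}\varrho_{D\psi_0}(\psi(x))$ for $m\ge 2$. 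Then take
\[
v(x) = \int_0^{\psi(x)} H(x,s)\,ds,
\]
where $H(x,s)$ is built from $G$ and its $(n-1)$-dimensional mollifications in the tangential variables at scale $s$, normalized so that $H(x,0) = G(x)/D_n\psi(x)\big|_{\partial\Omega}$ near $\partial\Omega$. This normalization fixes the trace $D_n v\big|_{\partial\Omega}$: on $\partial\Omega$ one has $\psi=0$, so by Leibniz $D_n v = D_n\psi\cdot H(\cdot,0) = G = g$. Moreover, the scale $s$ of $H$ is matched to the distance scale of $\psi$ precisely so that the naively singular contribution $\abs{D^2\psi}\cdot H\lesssim \varrho_{D\gamma}(d_x)/d_x$ appearing in the product rule for $D^2 v$ cancels against the $\partial_s$-derivatives produced by differentiating the upper limit of the integral, leaving only uniformly bounded expressions. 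This cancellation is the Dini analogue of the corresponding cancellation in Safonov's argument.

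The estimates follow by direct differentiation. The $C^1$ bound $\abs{v}_{1;\rU_b}\lesssim \abs{g}_{1;\Omega}$ is immediate from $\psi\lesssim d_x$ and $\abs{D\psi}\le 1$. Combining Lemma~\ref{lem:a5} and Corollary~\ref{cor:a5} with the cancellation above, a term-by-term estimation of $D^2 v$ yields the pointwise bound $\abs{D^2 v(x)}\lesssim \abs{g}_{1;\Omega}+\varrho_{Dg}(cd_x)+\varrho_{D\gamma}(cd_x)$; taking the supremum gives the stated bound on $[v]_{2;\rU_b}$. For $\abs{l}=m\ge 3$, iterating the chain rule yields $\abs{D^l v(x)}\le C d_x^{2-m}(\varrho_{Dg}(cd_x)+\varrho_{D\gamma}(cd_x))$, the extra factor of $d_x$ coming from the leading $\psi$ in the definition of $v$. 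Finally, the modulus $\varrho_{D^2 v}(t)\le C\int_0^t(\varrho_{Dg}(cs)+\varrho_{D\gamma}(cs))/s\,ds$ is obtained from the $D^3 v$ bound via a mean-value argument along paths in $\rU_b$ staying at distance $\simeq s$ from $\partial\Omega$, exactly as in the integration step at the end of the proof of Lemma~\ref{lem:a1}.

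The principal obstacle is the construction of $H(x,s)$ with the correct balance between its tangential mollification scale and the $s$-integration scale, so that the singular $\abs{D^2\psi}\cdot H$ contribution cancels and $D^2 v$ remains bounded up to $\partial\Omega$. Once this cancellation is in place (via a standard regularized-distance-matched mollification, mirroring the construction of $\psi$ in the proof of Lemma~\ref{lem:a5}), the Dini estimates propagate through the chain rule just as in Safonov's H\"older argument, with $\varrho_{Dg}$ and $\varrho_{D\gamma}$ replacing H\"older seminorms throughout.
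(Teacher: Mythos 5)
There is a genuine gap. Your construction $v(x)=\int_0^{\psi(x)}H(x,s)\,ds$ hinges entirely on a function $H(x,s)$ that is never written down and on a ``cancellation'' between $\abs{D^2\psi}\cdot H$ and the $\partial_s$-terms that is asserted but never verified. You yourself identify this as ``the principal obstacle'' and then do not overcome it; but this obstacle \emph{is} the lemma. Concretely: the normalization $H(x,0)=G(x)/D_n\psi(x)$ forces a division by $D_n\psi$, which is merely Dini continuous (and must first be shown to be bounded away from zero near $\partial\Omega$), so tangential derivatives of $H$ can a priori blow up like $\varrho_{D\psi_0}(d_x)/d_x$; without an explicit formula for $H$ there is no way to check that $D^2v$ stays bounded, let alone to extract the quantitative bounds on $[v]_{2;\rU_b}$, on $\abs{D^lv}$ for $\abs{l}\ge 3$, and on $\varrho_{D^2v}$ that the lemma claims. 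The final modulus-of-continuity step (the path-integration argument) is fine, but it presupposes exactly the $D^3v$ bound you have not established.

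The missing realization is that no such delicate construction is needed, because the boundary operator here is simply $D_n$ (the obliqueness has already been straightened out in Lemma~\ref{lem:a1}). The paper takes the extension $\tilde g\in C^{1,\rm{Dini}^2}(\overline\Omega)\cap C^\infty(\Omega)$ of $g$ furnished by Corollary~\ref{cor:a5} and sets
\[
v(x)=v(x',x^n)=-\int_{x^n}^{b}\tilde g(x',t)\,dt ,
\]
so that $D_nv=\tilde g$ \emph{identically} in $\rU_b$, hence $D_nv=g$ on $\partial\Omega$ with no cancellation, no regularized distance, and no division by $D_n\psi$. Every stated estimate then reduces to the bounds on $\tilde g$ from Corollary~\ref{cor:a5}: derivatives $D^l v$ with $l^n\ge 1$ are derivatives of $\tilde g$ of one order lower, while purely tangential derivatives ($l^n=0$) are controlled by integrating $\abs{D^l\tilde g(x',t)}\le C\,\bar d_{(x',t)}^{\,1-m}\bigl(\varrho_{Dg}(c\bar d_{(x',t)})+\varrho_{D\gamma}(c\bar d_{(x',t)})\bigr)$ in $t$, where the Dini condition makes $\int_0^{2b}\varrho(ct)t^{-1}\,dt$ finite for $m=2$ and Lemma~\ref{lem1822sat} (monotonicity of $\varrho(ct)/t^\mu$) gives the $d_x^{2-m}$ decay for $m\ge 3$. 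You should replace your construction with this one; the rest of your outline (the interpolation of the $C^1$ bound and the path argument for $\varrho_{D^2v}$) then goes through.
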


\begin{proof}
We modify the proof of Theorem~2.2 in \cite{Safonov}.
For $x=(x', x^n) \in \rU_b$, we set
\[
\bar d_x=x^n-\gamma(x').
\]
Note that we have $\bar d_x \simeq d_x=\dist(x, \partial \Omega)$ for $x \in \rU_b$.
By Corollary~\ref{cor:a5}, there exists a function $\tilde g \in C^{1, \rm{Dini}^2}(\overline \Omega) \cap C^\infty(\Omega)$ such that $\tilde g=g$ on $\partial \Omega$ and
\begin{equation}				\label{eq1816xmas}
\abs{\tilde g}_{1; \Omega} \le C \abs{g}_{1; \Omega},
\end{equation}
and for any multi-index $l$ with $\abs{l}=m\ge 2$
\begin{equation}				\label{eq1947xmas}
\abs{D^l \tilde g(x)} \le C\bar d_x^{1-m} \left(\varrho_{Dg}(c_1 \bar d_x)+ \varrho_{D\psi_0}(c_2 \bar d_x) \right),\quad \forall x \in \rU_b.
\end{equation}
Now, we define
\[
v(x)=v(x',x^n)= -\int_{x^n}^{b} \tilde g(x',t)\,dt,\quad x \in \rU_b.
\]
Then, it is clear that $D_n v=\tilde g=g$ on $\partial \Omega$.
Moreover, by \eqref{eq1816xmas}, for $x \in \rU_b$, we have
\[
\abs{v(x)} \le 2b [\tilde g]_{0; \Omega} \le C \abs{g}_{1; \Omega} ,\quad
\abs{D_i v(x)} \le 2b [\tilde g]_{1; \Omega} \le C \abs{g}_{1; \Omega},
\]
for $i=1,\ldots, n-1$, and also
\[
\abs{D_n v(x)} = \abs{\tilde g(x)} \le C \abs{g}_{1; \Omega}.
\]
Now, for $l=(l',l^n)$, $\abs{l}=\abs{l'}+ l^n=m\ge 2$, and $x \in \rU_b$, we consider separately the cases $l^n \ge 1$ and $l^n=0$.
If $l^n\ge 1$, then
\[
D^l v(x) = D^{l'} D_n^{l^n}v(x)=D^{l'} D_n^{l^n-1} \tilde g(x),
\]
and thus, when $m=2$, we have
\[
\abs{D^l v(x)} \le [\tilde g]_{1; \Omega} \le C \abs{g}_{1; \Omega},
\]
and when $m \ge 3$, by \eqref{eq1947xmas}, we have
\[
\abs{D^l v(x)} \le C \bar d_x^{2-m} \left(\varrho_{Dg}(c_1 \bar d_x)+ \varrho_{D\gamma}(c_2 \bar d_x) \right).
\]
If $l^n=0$, by using \eqref{eq1947xmas} and noting $\bar d_{(x',t)} = t-\gamma(x')$, we obtain
\begin{equation}				\label{eq1319m}
\abs{D^l v(x)} \le \int_{x^n}^{b} \abs{D^l \tilde g(x',t)}\,dt \le C  \int_{\bar d_x}^{b-\gamma(x')} \frac{\varrho_{Dg}(c_1t)}{t^{m-1}} + \frac{\varrho_{D\gamma}(c_2 t)}{t^{m-1}}\,dt.
\end{equation}
In particular, when $m=2$, we derive from \eqref{eq1319m} that
\[
\abs{D^l v(x)} \le  C \int_{0}^{2b} \frac{\varrho_{Dg}(c_1t)}{t} + \frac{\varrho_{D\gamma}(c_2 t)}{t}\,dt.
\]
Let us fix $\mu \in (0,1)$.
By Lemma~\ref{lem1822sat}, we may assume without loss of generality that the functions  $\varrho_{Dg}(c_1t)/t^\mu$ and $\varrho_{D\gamma}(c_2t)/t^\mu$ are decreasing on $(0, 2b]$.
Hence, when $m \ge 3$ we get from \eqref{eq1319m} that
\begin{align*}
\abs{D^l v(x)} &\le  C \left( \frac{\varrho_{Dg}(c_1 \bar d_x)}{\bar d_x^\mu} + \frac{\varrho_{D\gamma}(c_2 \bar d_x)}{\bar d_x^\mu}\right) \int_{\bar d_x}^{b-\gamma(x')} t^{\mu+1-m}\,dt \\
&\le C  \left( \frac{\varrho_{Dg}(c_1 \bar d_x)}{\bar d_x^\mu} + \frac{\varrho_{D\gamma}(c_2 \bar d_x)}{\bar d_x^\mu}\right)(m-2-\mu) \bar d_x^{\mu+2-m}.
\end{align*}
Therefore, in conclusion, for $\abs{l}=2$, we have
\[
\abs{D^l v(x)} \le C \abs{g}_{1; \Omega}+ C\int_{0}^{2b} \frac{\varrho_{Dg}(c_1t)}{t} + \frac{\varrho_{D\gamma}(c_2 t)}{t}\,dt,\quad \forall x \in \rU_b,
\]
and for $\abs{l}=m\ge 3$, we have
\begin{equation}					\label{eq1543tu}
\abs{D^l v(x)} \le C  \bar d_x^{2-m} \varrho_{Dg}(c_1 \bar d_x) + C  \bar d_x^{2-m}\varrho_{D\gamma} (c_2 \bar d_x),\quad \forall x \in \rU_b.
\end{equation}
Finally, we estimate the modulus of continuity of $D^2 v$.
Let us fix $x_1$, $x_2 \in \rU_b$, and set $r=\abs{x_1-x_2}$.
One can choose $x_0 \in \rU_b$ such that
\[
B(x_0, r/N) \in \rU_b,\quad \abs{x_k-x_0} \le Nr\;\text{ for }\; k = 1, 2,
\]
for some $N>0$.
Furthermore, we can connect $x_k$ with $x_0$ by means of a smooth path in $\rU_b$,
\[
\set{x=h_k(s): 0\le s \le s_k},\quad s_k\le Nr,\quad h_k(0)=x_k,\quad h_k(s_k)=x_0,
\]
parameterized by the arc length $s$ in such a manner that
\[
s/N \le  \bar d_{h_k(s)}  \le N,\quad 0\le s \le s_k.
\]
By the mean value theorem and \eqref{eq1543tu} with $m=3$, we get
\[
\abs{D_{ij}v(x_0)-D_{ij}v(x_k)} \le  \int_0^{s_k} \abs{D D_{ij} v(h_k(s))}\,ds
\le C  \int_0^{s_k} \frac{\varrho_{Dg}(c_1 \bar d_{h_k(s)})}{\bar d_{h_k(s)}} + \frac{\varrho_{D\gamma}(c_2 \bar d_{h_k(s)})}{\bar d_{h_k(s)}} \,ds.
\]
Again, by Lemma~\ref{lem1822sat}, we may assume without loss of generality that the functions  $\varrho_{Dg}(c_1t)/t$ and $\varrho_{D\gamma}(c_2t)/t$ are decreasing on $(0, 2b]$.
Then, we have
\begin{align*}
\abs{D_{ij}v(x_1)-D_{ij}v(x_2)}& \le \sum_{k=1}^2 \,\abs{D_{ij}v(x_0)-D_{ij}v(x_k)}\\
& \le C N \sum_{k=1}^2  \int_0^{s_k} \frac{\varrho_{Dg}(c_1 s/N)}{s} + \frac{\varrho_{D\gamma}(c_2 s/N)}{s} \,ds\\
&\le \tilde C \int_0^r \frac{\varrho_{Dg}(\tilde c s)}{s}+\frac{\varrho_{D\gamma}(\tilde c s)}{s}\,ds,
\end{align*}
where $\tilde C$, $\tilde c$ are constants that depend only on $n$ and $\gamma$.
\end{proof}

\section*{acknowledgment}
The authors would like to thank the anonymous referee for his careful reading and valuable comments.

\def\cprime{$'$}

\end{document}